\newcommand{\C}{\mathbb C}
\newcommand{\A}{\mathbb A}
\newcommand{\PP}{\mathbb P}
\newcommand{\Z}{\mathbb Z}
\newcommand{\Q}{\mathbb Q}
\newcommand{\tha}{^{\text{th}}}
\DeclareMathOperator{\SL}{SL}
\DeclareMathOperator{\PGL}{PGL}
\DeclareMathOperator{\ord}{ord}
\DeclareMathOperator{\Aut}{Aut}
\DeclareMathOperator{\Fix}{Fix}
\DeclareMathOperator{\Per}{Per}
\DeclareMathOperator{\Res}{Res}
\DeclareMathOperator{\ch}{char}
\DeclareMathOperator{\Rat}{Rat}
\newcommand{\ignore}[1]{}
\newtheorem{theorem}{Theorem}[section]
\newtheorem{lemma}[theorem]{Lemma}
\newtheorem{prop}[theorem]{Proposition}
\newtheorem{define}[theorem]{Definition}
\newtheorem{cor}[theorem]{Corollary}
\newtheoremstyle{citing}{}{}{\itshape}{}%
{\bfseries}{.}{ }{\thmnote{#3}}
\theoremstyle{citing}
\newtheorem{cit}{}
\theoremstyle{remark}
\newtheorem*{example}{Example}
\newtheorem*{remark}{Remark}
\newtheorem*{claim}{Claim}
\begin{document}
\title{Moduli spaces for families of rational maps on $\mathbb{P}^1$}
\author{Michelle Manes}
\date{\today}

\maketitle

\begin{abstract}
Let $ \phi:\PP^1\to\PP^1$ be a rational map 
defined over a field~$K$.   We construct the moduli space $M_d(N)$ parameterizing conjugacy classes of degree-$d$ maps with a point of formal period $N$ and present an algebraic proof that $M_2(N)$ is geometrically irreducible for $N > 1$.  Restricting ourselves to maps $\phi$ of arbitrary degree $d\geq 2$ such that $h^{-1}\circ\phi\circ h=\phi$ for some nontrivial $h\in\PGL_2\ \left(\overline K \right) $, we show that the moduli space parameterizing these maps with a point of formal period $N$ is geometrically reducible for infinitely many $N$.
\end{abstract}

\section{Introduction}

 Let $\phi: \mathbb{P}^1 \rightarrow  \mathbb{P}^1$ be a morphism defined over a field~$K$ with algebraic closure $\overline K$.  We denote by $\phi^N$ the $N^{\mathrm{th}}$ iterate of $\phi$ under composition.  A point $P \in \PP^1$ is \emph{periodic} if there exists an integer $N>0$ such that $\phi^N(P)=P$, and $P$ is \emph{preperiodic} if there exist integers $N > M \geq 0$ such that $\phi^N(P) = \phi^M(P)$.  We say $P$ has \emph{period} $N$ if $\phi^N(P) = P$; it has \emph{primitive period} $N$ if $N>0$ is the smallest such integer; and it has \emph{formal period} $N$ if it is a root of the $N^{\mathrm{th}}$ ``dynatomic polynomial'' (defined in Section~\ref{prelim}).  Except in rare cases, points of formal period $N$ coincide with those of primitive period~$N$. 
 
Because we focus on $\PP^1$, every rational map is in fact a morphism; we therefore use the terms  interchangeably.   Further, if we write $\phi$ as a rational map $\phi(z)=F(z)/G(z)$ with $F, G \in K[z]$,  we may take $\deg \phi = \max\{\deg F, \deg G\}$, which corresponds to the usual notion of degree of a morphism of projective curves.

  In Section~\ref{makemd}, we construct the moduli space of rational maps of  degree~$d$ with level-$N$ structure; that is, the space $M_d(N)$ parameterizing rational maps of degree $d$ up to coordinate change together with a point of formal period $N$.   In Section~\ref{qpvir} we prove the following.

\begin{cit} [Theorem~\ref{irred}]
$M_2(N)$ is geometrically irreducible for every $N>1$.
\end{cit}

The proof takes advantage of the fact that we have an explicit description of $M_2$, the moduli space of degree~$2$ rational maps.  Using a normal form for quadratic rational maps, one may iterate to find a polynomial description of a surface mapping surjectively to $M_2(N)$ for each $N$.  We then specialize and apply a result of Morton to prove irreducibility of the covering surfaces, showing that $M_2(N)$ is also irreducible.

There are, of course, natural surjective maps
$$
p_N \colon M_2(N) \to M_2,
$$
simply forgetting the point of period~$N$.  An algebraic curve $C \subseteq M_2$ defines a family of (conjugacy classes of) quadratic rational maps $\phi_C$, and the pullback $p_N^{-1}(C)$ is an algebraic curve on  $M_2(N)$.  One such curve $C$ corresponds to the family of quadratic polynomials, which may be written in the normal form $f_c(z) = z^2+c$.  In his thesis~\cite{bousch}, Bousch proved that the  dynatomic polynomial $\Phi_{N, f_c}^*(z) = \Phi_N^*(z,c) \in \Z[z,c]$  is irreducible for every $N$.  In other words, the curves in $M_2(N)$ lying over that family are all irreducible.
Bertini's theorem, combined with the fact that the surfaces $M_2(N)$ are irreducible, says  that we should expect generic irreducibility of the period-$N$ curves.  

The main result of Section~\ref{red},  then, is potentially surprising.  It says that for the family of rational maps with a nontrivial automorphism,  the dynatomic polynomials  are usually  \emph{reducible} when $N$ is even.  Hence, we should expect the dynamic modular curves, which are defined by $\Phi_N^*=0$, to be reducible for $N$ even.  Of course, the set of maps with nontrivial autmorphisms are a small subset of the space of rational maps, so this is not generic behavior.  This is actually a consequence of a more general result, which we now describe.

Let $\phi\colon \PP^1 \to \PP^1$ be a rational map.  An element $h \in \PGL_2$ acts on $\phi$ via conjugation: $\phi^h = h^{-1}\circ \phi \circ h$.  If $\phi^h = \phi$ we say that $h$ is an automorphism of $\phi$.  When $\phi$ has a $\PGL_2$-automorphism of prime order~$p$, then the group of automorphisms $\Aut(\phi)$ contains a subgroup isomorphic to  $\mathfrak C_p$, the cyclic group of order $p$.  
We define $M_d(N, \mathfrak C_p)$ to be the moduli space of (equivalence classes of) rational maps of degree $d$ having an automorphism of prime order~$p$, together with a  point of primitive period~$N$.

\begin{cit}[Corollary~\ref{mainthmcor}]
$M_d(pN, \mathfrak C_p)$ is reducible for 
\begin{enumerate}[\textup(a\textup)]
\item
all but finitely many integers $N$ if $K$ has characteristic~$0$, and
\item
all but finitely many prime integers $N$ for arbitrary fields $K$.
\end{enumerate}
\textup(In particular, the curves $M_2(2N, \mathfrak C_2)$ are reducible for infinitely many~$N$.\textup)
\end{cit}

This result is dramatically different from reducibility results that have appeared in the literature previously.  If $\phi(z) = z^d$ is a pure power function or if $\phi(z)$ is a Chebyshev polynomial, 
the dynatomic polynomial $\Phi_{N,\phi}^*$ is known to be reducible for infinitely many $N$.  In Section~\ref{power}, we add reciprocals of pure power functions, $ \phi(z) = z^{-d}$, to this list.  In all of these cases, however, the reducibility result holds for only one map (up to conjugacy) of each degree $d$.  In Corollary~\ref{mainthmcor}, we have a natural \emph{family} of maps in each degree such that the dynatomic polynomials are reducible infinitely often for every map in the family.  
  
 The proof of Corollary~\ref{mainthmcor} is constructive, providing a particular proper closed subvariety of  $M_d(pN, \mathfrak C_p)$ of maximal dimension.  The difficulty of the proof lies in first defining the appropriate object, and then proving that the object is in fact a proper subvariety, which is not at all obvious from the definition.

The construction  gives a geometric explanation for reducibility when a rational map $\phi$ has a nontrivial automorphism:  If $P$ is a point of formal period $N$, then $h(P)$ must be as well for $h \in \Aut(\phi)$.  There are two possible cases if the order of $h$ divides $N$: either $P$ and $h(P)$ are on the same orbit, or the action of $h$ interchanges the separate orbits of $P$ and $h(P)$.  We show that the moduli space has at least two components, corresponding to these two possibilities.
 
The dynatomic polynomials allow us to relate $K$-rational periodic points for a morphism $\phi$ to $K$-rational points on an algebraic curve.  Understanding the geometric properties of these curves (reducibility, genera, etc.) has allowed authors to tackle questions related to the  uniform boundedness conjecture in arithmetic dynamics (see~\cite{quadmaps2} and \cite{5cyc}) as well as more general number theoretic questions related to cyclic extensions of number fields (see~\cite{algcurve}).  To date, most of this work has been done with polynomial maps.  It is our hope that this paper will spur work on more general rational maps, and in fact some of this work has already appeared (see~\cite{manesQcyc}).

\begin{remark}
This paper forms a portion of the author's Ph.D.~thesis~\cite{manesthesis}.
\end{remark}

\section{Preliminaries}\label{prelim}

Let $X$ be a variety and 
$
\phi \colon X \to X
$
be a morphism defined over some field $K$.  Let 
$$
\Per_N(\phi) = \{\alpha \in X\left(\overline K\right): \phi^N(\alpha) = \alpha\}
$$
 be the set of points of period $N$ for $\phi$.
We define
\begin{align*}
\Delta=\Delta(X) &= \{(x,x) : x \in X\} \subset X \times X,
&& \text{ the diagonal, }\\
\Gamma(\phi^N) & =  \{(x,\phi^N(x)) : x \in X\} \subset X \times X,
&& \text{ the graph of the morphism $\phi^N$.}
\end{align*}
We can then assign a multiplicity to each  $P \in \Per_N(\phi)$ by taking the intersection multiplicity $a_{P}(N)$ of the diagonal with $\Gamma(\phi^N)$ in $X \times X$.  Following Morton and Silverman in~\cite{dynunit}, we define the cycle of $N$-periodic points
\begin{equation}\label{zn}
Z_N(\phi) = \sum_{P \in X\left(\overline K\right)} a_P(N) P
\overset{\text{def}}{=}
\Delta \cdot \Gamma(\phi^N),
\end{equation}
and the cycle of primitive $N$-periodic points
\begin{equation}\label{zn*}
Z_N^*(\phi) = \sum_{P \in X\left(\overline K\right)} a_P^*(N) P
\overset{\text{def}}{=}
\sum_{k\mid N}\mu\left(\frac N k\right)Z_k(\phi),
\end{equation}
where $\mu$ is the Moebius function.  In~\cite{dynunit}, the authors show that if $X$ is a curve, then $Z_N^*$ is an effective $0$-cycle, and they give a precise description of the points $P \in X$ with $a_P^*(N) >0$.   In his thesis, Hutz~\cite{hutz1} has extended these results to $X$ an irreducible, nonsingular projective variety of arbitrary dimension.

If $\alpha\in \Per_N(\phi)$ then $\phi^N$  induces a map from the cotangent space of $X$ to itself,
$$
\left(\phi^N\right)^* \colon \Omega_{\alpha}(X) \longrightarrow \Omega_{\alpha}(X).
$$
If $X$ is a smooth curve, then the cotangent space has dimension one.  So $\left(\phi^N\right)^*$ must be multiplication by a scalar, which we call the \emph{multiplier}\label{multdef} of the cycle associated to $\alpha$.
When $X = \PP^1$, we may write $\phi(z) \in K(z)$ as a rational map.  Then the scalar is exactly $\left(\phi^N\right)'(\alpha)$ as long as   the point at infinity is not in the orbit of $\alpha$ (though there is a natural extension to this case, see~\cite[exercise 1.13]{ads}).  In particular, for $\phi \colon \PP^1 \to \PP^1,$
and for $\alpha\in \PP^1$ a fixed point of $\phi$, the multiplier of the fixed point is $\phi'(\alpha)$.

Note that if $\deg(\phi) = d$, then $\phi$ has $d+1$ fixed points, counted with proper multiplicity.   When $X = \PP^1$ and the  fixed points of $\phi$ are all distinct, there is an identity on the multipliers of these fixed points.
 Let $\lambda_1, \cdots, \lambda_{d+1}$ be the multipliers.  Then by~\cite[Theorem 1.14]{ads}, we have
\begin{equation}\label{fpident}
\sum_{i=1}^{d+1}
1/(1-\lambda_i)  = 1.
\end{equation}
The requirement that the fixed points are distinct means that none of the $\lambda_i$ are~$1$.  If the fixed points are not all distinct, a more complicated identity still holds (see~\cite[exercise~$1.17$]{ads}).

If $X = \PP^1$, then we may write the morphism $\phi \colon\PP^1 \to \PP^1$ using homogeneous coordinates
\begin{align}
\phi(x,y) &=\left[F_1(x,y): G_1(x,y)\right] \nonumber\\
&= \left[a_d x^d + \cdots + a_{1}x y^{d-1}+a_0 y^d: b_d x^d  +\cdots+ b_{1}x y^{d-1}+ b_0y^d\right],\label{phiform}
\end{align}
for some polynomials $F_1, G_1 \in K[x,y]$ with no common factors over $\overline K$, and $\deg \phi = \deg F_1 = \deg G_1$.   Of course, for any $u \in \overline K^*$,   $[uF_1 : uG_1]$ defines the same rational map $\phi$.  We therefore identify each $\phi$ with a unique point in $\PP^{2d+1}$ via
\[
\phi \longmapsto
\left[a_d:\cdots:a_{1}:a_0:b_d:\cdots:b_{1} : b_0\right].
\]
The requirement that $F_1$ and $G_1$ share no common factors means, however, that not every point in $\PP^{2d+1}$ corresponds to a map of degree~$d$.

The \emph{resultant} of two polynomials $F$ and $G$ is a polynomial in the coefficients of  $F$ and $G$ with the property that $\Res(F,G) = 0$ if and only if $F$ and $G$ have a common zero in $\PP^1\left(\overline K\right)$.  (See \cite[Proposition 2.13]{ads} for details.)  So the space of rational maps of degree~$d$ corresponds to an affine variety:
\[
\Rat_d =
\PP^{2d+1} \smallsetminus \{\Res(F_1,G_1)=0\}.
\]

 We say that two rational maps $\phi$ and $\psi$ are \emph{linearly conjugate} if there is some $h\in \PGL_2\left(\overline K\right)$ such that $\phi^h = \psi$. 
Because linearly conjugate maps have the same dynamical behavior, it is natural to consider the quotient space 
\[
M_d = \Rat_d /\PGL_2.
\]
Generalizing 
work by Milnor~\cite{milnrat}, Silverman~\cite{mdgit} proved that $M_d$ exists as an affine integral scheme over $\Z$ and that 
$M_2$ is isomorphic to $\A^2_{\Z}$.   In fact, if we let $\lambda_1, \lambda_2, \lambda_3$ be the multipliers of the three fixed points of $\phi$ (counted with multiplicity), then the first two symmetric functions of these multipliers form natural coordinates for $M_2$:
\begin{equation}\label{m2coords}
M_2  =\{ (\sigma_1, \sigma_2)\} \text{ where } 
\sigma_1 = \lambda_1 + \lambda_2 + \lambda_3, \text{ and }
\sigma_2 =  \lambda_1 \lambda_2 + \lambda_1 \lambda_3 +\lambda_2 \lambda_3.
\end{equation}

Writing $\phi(x,y) = \left[F_1(x,y): G_1(x,y)\right]$, we are able to describe iteration:   
\[
\phi^N(x,y) = \left[F_N(x,y) : G_N(x,y)\right]
\]
 represents $\phi$ composed with itself $N$ times, where the polynomials $F_N$ and $G_N$ are given by the double recursion
\[
  F_N(x,y) = F_{N-1}\left(F_1(x,y),G_1(x,y)\right) \text{ and } G_N(x,y) = G_{N-1}(F_1(x,y),G_1(x,y)).
\]
We now define a homogeneous polynomial $\Phi_{N,\phi}(x,y)$ whose roots are precisely points of period $N$ for $\phi$:
\[
\Phi_{N,\phi}(x,y) =y F_N(x,y)  - x G_N(x,y).
\]
  If $P=[x:y] \in\PP^1$ is a root of this polynomial, then by construction $\phi^N(P) = P$.

\begin{define}
The  $N\tha$ \emph{dynatomic polynomial} for $\phi$ is given by
\[
\Phi_{N,\phi}^*(x,y) =  \prod_{k \mid N} \left(\Phi_{k,\phi}(x,y)\right)^{\mu\left(N/k\right)} =
\prod_{k \mid N} \left(y F_k(x,y)  - x G_k(x,y)\right)^{\mu(N/k)}.
\]
\end{define}

  The cycle $Z_N^*(\phi)$ in equation~\eqref{zn*} is a formal sum of the roots of $\Phi_{N,\phi}^*(x,y)$ counted  with multiplicity, and the result that $Z_N^*(\phi)$ is an effective $0$-cycle means that $\Phi_{N,\phi}^*(x,y)$ is a polynomial.  
   (To ease notation, we will write simply $\Phi_N$ and $\Phi_{N}^*$ unless the distinction is needed.)

 We would like to say that the roots of $\Phi_{N}^*$ are the points in $\PP^1$ with primitive period $N$ for $\phi$.  All such points are, indeed, roots of $\Phi_{N}^*$, but it is possible that other points with smaller primitive period arise as roots as well.  We call the roots of $\Phi_{N}^*$ the points of \emph{formal period} $N$ for $\phi$.
  
  \begin{example}
  Let 
  \begin{align*}
  \phi(x,y) &= \left[-x^2+y^2: xy\right], \text{ so}\\
   \phi^2(x,y) &= [-\left(x^2+xy -y^2\right) \left(x^2- xy -y^2\right): -xy(x+y)(x-y)].
   \end{align*}
    Then we have:
  \begin{align*}
  \Phi_1^*(x,y) & = 
  -y \left(2 x^2-y^2\right)\\
  \Phi_2^*(x,y) & = 
  -y^2
  \end{align*}
  So we see that the point at infinity $\alpha = [1:0]$ is a fixed point, but it also appears as a double-root of the second dynatomic polynomial.
  
  \end{example}

 \bigskip
 
If  $K$ is a field with characteristic different from~$2$, and $f(z) \in K[z]$ is a quadratic polynomial, then $f$ is linearly conjugate to $f_c(z) = z^2+c$ for some $c\in K$.  In this case, one may consider the dehomogenized  $N\tha$ dynatomic polynomial, 
$$
\Phi_{N,f_c}^*(z) =
\prod_{k \mid N} \left( f_c^k(z)  -z \right)^{\mu(N/k)}.$$

\begin{theorem}[Bousch]\label{bousch}
Let $f_c(z) = z^2 +c $.  Then 
$\Phi_{N,f_c}^*(z)  \in \Z[z,c]$ is irreducible over $\C[z,c]$ for every $N$.
\end{theorem}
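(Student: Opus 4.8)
The plan is to show that $\Phi_{N,f_c}^*(z)$ is irreducible as a polynomial in $z$ over the field $\C(c)$; by Gauss's lemma this is equivalent to the irreducibility asserted in the theorem, because $\Phi_{N,f_c}^*(z)$ is monic in $z$ --- for $N=1$ it is $z^2-z+c$, and in general it is a quotient of polynomials that are monic in $z$, hence itself monic of degree $\nu:=\sum_{k\mid N}\mu(N/k)2^k$ --- and therefore primitive over the unique factorization domain $\C[c]$. So from now on we work over $\C(c)$.

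First I would fix a splitting field $L/\C(c)$ of $\Phi_{N,f_c}^*(z)$, set $G=\Gal(L/\C(c))$, and let $G$ act on the set $R$ of the $\nu$ roots; irreducibility is exactly transitivity of this action. For all but finitely many $c$ the elements of $R$ are precisely the points of primitive period $N$ of $f_c(z)=z^2+c$, so they break into orbits of size $N$ under $f_c$, say $\nu/N$ of them. Since $f_c$ has coefficients in $\C[c]$, the permutation $\rho$ of $R$ it induces commutes with every element of $G$; hence $G$ preserves the partition of $R$ into these $f_c$-orbits and lands in the centralizer $\mathfrak C_N\wr S_{\nu/N}$ of $\rho$ in $S_\nu$. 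Transitivity of $G$ then reduces to two statements: (a) $G$ permutes the $\nu/N$ orbits transitively; and (b) the stabilizer in $G$ of a single orbit $\mathcal O$ surjects onto the group $\mathfrak C_N$ of rotations of $\mathcal O$. Indeed, given $\alpha,\beta\in R$, one applies (a) to find $\sigma\in G$ carrying the orbit of $\alpha$ onto that of $\beta$, and then (b) to rotate $\sigma\alpha$ onto $\beta$.

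Geometrically, (a) is the irreducibility of the ``cycle curve'' $Y$, the quotient of the dynatomic curve $X=\{\Phi_{N,f_c}^*(z)=0\}$ by the free $\Z/N$-action of $f_c$, and (b) is the connectedness of the resulting cyclic cover $X\to Y$. I would attack both by computing monodromy along the $c$-line: the covers $X\to\A^1_c$ and $Y\to\A^1_c$ fail to be \'etale over only finitely many parameters, and by the Morton--Silverman description of the multiple roots of $\Phi_N^*$ these are the parabolic parameters, where a period-$N$ cycle collides with a cycle of period $k\mid N$, $k<N$. Near such a collision the local monodromy should act by a cycle in the ``block'' direction, and assembling these contributions over all collisions should force (a); to obtain (b) one instead inspects a parameter at which a period-$N$ cycle has multiplier a primitive $N\tha$ root of unity and checks that the local monodromy there is a generator of $\mathfrak C_N$. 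The main obstacle is exactly this analysis: pinning down the branch locus, checking that the dynatomic curve is reduced along it (a derivative-of-the-period-equation computation in the style of Gleason's argument that the centers-of-components polynomials are separable), and verifying that the local monodromies together generate a group large enough for (a) and (b). An alternative to the monodromy bookkeeping --- and closer to Bousch's original route --- is a direct complex-analytic study of how the period-$N$ cycles and the parameter $c$ deform together near parabolic parameters; in either form, the deformation analysis at the parabolic parameters is where the real work lies.
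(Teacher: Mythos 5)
The paper does not prove this statement; it quotes it as a theorem from Bousch's thesis~\cite{bousch} (with a remark that Morton~\cite{algcurve} later generalized it), so there is no internal proof to compare against. Your preliminary reductions are all correct: $\Phi_{N,f_c}^*$ is monic in $z$ of degree $\nu_2(N)$, so Gauss's lemma transfers the question to $\C(c)[z]$; since $f_c$ has coefficients in $\C(c)$, the permutation $\rho$ it induces on the roots commutes with the Galois action, forcing $G$ into the centralizer $\mathfrak C_N\wr S_{\nu_2(N)/N}$ of $\rho$; and transitivity decomposes, as you say, into block-transitivity (a) plus surjectivity of a block stabilizer onto $\mathfrak C_N$ (b). The geometric translation --- (a) as irreducibility of the cycle curve, (b) as connectedness of the cyclic cover of it by the dynatomic curve --- is the right framework and matches how both Bousch and Morton organize their arguments.

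However, the actual content of the theorem is absent: the final paragraph, which you flag as ``where the real work lies,'' \emph{is} the theorem. To make your outline a proof you would need a precise identification of the branch locus of $X\to\A^1_c$ (both satellite-parabolic parameters, where a shorter cycle acquires a root-of-unity multiplier, and primitive-parabolic parameters, where two period-$N$ orbits merge with multiplier $1$), a proof that there is no ramification elsewhere, an explicit local monodromy computation at each type of branch point, and a combinatorial argument that the resulting local generators are jointly large enough to give (a) and (b). None of this is formal; the fact that the primitive $N$-bifurcation parameters are simple --- the very hypothesis the paper singles out in discussing Morton's generalization --- is itself a nontrivial theorem resting on the Douady--Hubbard analysis of the Mandelbrot set. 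As an honest outline of the known strategy your sketch is faithful, but it halts exactly at the step where Bousch's thesis and Morton's paper each spend the bulk of their effort, so it cannot stand in for the cited proof.
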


This result,  proved by Bousch in his thesis~\cite{bousch}, was later generalized by Morton in~\cite{algcurve} to polynomials $f(z,c) \in \Z[z,c]$ of arbitrary degree~$d\geq 2$, satisfying certain conditions, the most important being that the primitive $N$-bifurcation points --- that is, the values of $c \in \C$ for which two $N$-orbits coincide --- are distinct.  (This is well-known to be the case for the quadratic family $f_c$ by early results of Douady and Hubbard on the structure of the Mandelbrot set.)

Viewing $ \Phi_{N,f_c}^*(z) =  \Phi_{N,f_c}^*(z,c)$ as a polynomial in two variables, we see that for every $N$,  
\[
Y_1(N) \colon \Phi_{N,f_c}^*(z,c)=0
\]
 defines an affine algebraic curve.  These are modular curves in the sense that they parameterize isomorphism classes of pairs $(f,\alpha)$, where $f(z)$ is a quadratic polynomial and $\alpha$ is a point of formal period $N$ for $\phi$.  Theorem~\ref{bousch} says that these modular curves are geometrically irreducible.
A natural question to ask is how general this result may be.  If we consider other families of degree-$2$ rational maps on $\PP^1$, should we expect these modular curves to be irreducible? In Section~\ref{red} we tackle this question.

\section{Construction of Moduli Spaces}\label{makemd}

We begin by fixing some notation.

\begin{tabular}{l l}
$K$ & a field.\\
$\overline K$ & a (fixed) algebraic closure of $K$.\\
 $\phi(z)\in K(z)$ &  a rational map of degree $d$.\\
 $\phi(x,y) = \left[F_1(x,y):G_1(x,y)\right]$ & homogenization of $\phi(z)$.\\
 $F_N(x,y) = F_{N-1}(F_1(x,y),G_1(x,y))$. &\\
  $G_N(x,y) = G_{N-1}(F_1(x,y),G_1(x,y))$. &\\
$\phi^N(x,y) = \left[F_N(x,y):G_N(x,y)\right] $&  $\phi$ composed with itself $N$ times, \\
&with $F_N$ and $G_N$ as above.\\
$\Phi_{N,\phi}^*(x,y)$ & the $N\tha$ dynatomic polynomial\\
& for $\phi$ as defined in Section~\ref{prelim}.\\
$\PP^1$ & the projective line $\PP^1(\overline K)$.\\
$\PGL_2$ & the projective linear group over $\overline K$.\\
$\Rat_d$ & the space of degree~$d$ rational \\
& maps as described in Section~\ref{prelim}.\\
$M_d$ & the moduli space $\Rat_d/\PGL_2$.\\
$\Aut(\phi) = \{f \in \PGL_2 : \phi^f = \phi \}$ & the automorphism group of $\phi$.\\
\end{tabular}

\begin{define}
We define a level structure on $M_d$ as follows.  
\begin{align*}
\Rat_d(N) &= 
\{\left(\alpha, \phi \right)\in \PP^1 \times \Rat_d \colon 
 \alpha \text{ is a point of formal period $N$ for $\phi$}\}\\
 M_d(N) & = \Rat_d(N)/\PGL_2.
\end{align*}
 \end{define}

 \begin{prop}
For every $N\geq 1$, $ \Rat_d(N)$ exists as a scheme over~$\Z$, and $M_d(N)$ exists as a geometric quotient scheme over~$\Z$.
\end{prop}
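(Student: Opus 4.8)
Here is the plan. I will realize $\Rat_d(N)$ as a closed subscheme of $\PP^1\times\Rat_d$ that is \emph{finite} over $\Rat_d$, and then bootstrap Silverman's geometric invariant theory construction of $M_d=\Rat_d/\PGL_2$ (from~\cite{mdgit}) up to the level-$N$ situation. Throughout I take $d\geq 2$, as elsewhere in the paper. First I recall from Section~\ref{prelim} that $\Rat_d=\PP^{2d+1}\smallsetminus\{\Res(F_1,G_1)=0\}$ is an affine scheme of finite type over $\Z$. Treating the coefficients $a_0,\dots,a_d,b_0,\dots,b_d$ of a degree-$d$ map as indeterminates, the polynomials $F_N,G_N$ lie in $\Z[a_i,b_i][x,y]$; and the Moebius-inversion identity $\Phi_{N,\phi}=\prod_{k\mid N}\Phi_{k,\phi}^*$, together with the fact (established in~\cite{dynunit}; see also~\cite{ads}) that $\Phi_{N,\phi}^*$ is a genuine polynomial in the coefficients of $\phi$, shows that $\Phi_{N,\phi}^*(x,y)$ is a bihomogeneous element of $\Z[a_i,b_i][x,y]$ --- the integrality over $\Z$ (as opposed to $\Q$) being a routine Gauss's-lemma induction on $N$, writing $\Phi_{N,\phi}^*=\Phi_{N,\phi}\big/\prod_{k\mid N,\,k<N}\Phi_{k,\phi}^*$ in the unique factorization domain $\Z[a_i,b_i][x,y]$. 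Its vanishing locus defines a closed subscheme of $\PP^1\times\PP^{2d+1}$ over $\Z$; intersecting with the open subscheme $\PP^1\times\Rat_d$ produces $\Rat_d(N)$ as a $\Z$-scheme whose $\overline K$-points are exactly the pairs $(\alpha,\phi)$ with $\alpha$ a point of formal period $N$ for $\phi$.

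Next I would verify that the projection $\pi\colon\Rat_d(N)\to\Rat_d$ is finite. It is proper, being the restriction to a closed subscheme of the proper morphism $\PP^1\times\Rat_d\to\Rat_d$; and it is quasi-finite, because for every $\phi\in\Rat_d$ --- over any residue field --- the iterate $\phi^N$ has degree $d^N\geq 2$, hence $\phi^N\neq\id$, hence $\Phi_{N,\phi}=yF_N-xG_N$ is nonzero and so is its divisor $\Phi_{N,\phi}^*$, which therefore cuts out a zero-dimensional subscheme of $\PP^1_{k(\phi)}$. A quasi-finite proper morphism of Noetherian schemes is finite, so $\pi$ is finite; in particular $\Rat_d(N)$ is affine over the affine scheme $\Rat_d$, hence affine.

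Finally I would descend to moduli. The group $\PGL_2$ acts on $\Rat_d(N)$ by $h\cdot(\alpha,\phi)=(h(\alpha),\,h\phi h^{-1})$: this is well defined because the transformation law for dynatomic polynomials under conjugation shows that conjugation carries points of formal period $N$ to points of formal period $N$, and $\pi$ is visibly $\PGL_2$-equivariant. By~\cite{mdgit}, for $d\geq 2$ every point of $\Rat_d$ is $\PGL_2$-stable and $M_d=\Rat_d/\PGL_2$ is a geometric quotient over $\Z$; I claim the same holds one level up. Indeed, the stabilizer of $(\alpha,\phi)$ in $\PGL_2$ is contained in the (finite) automorphism group of $\phi$, so it is finite; and if $y\in\Rat_d(N)$ lies over $x\in\Rat_d$, then $\pi^{-1}(\PGL_2\cdot x)$ is closed in $\Rat_d(N)$ (preimage of the closed orbit $\PGL_2\cdot x$) and finite over $\PGL_2\cdot x$, hence is a union of finitely many $\PGL_2$-orbits, each of dimension $\dim\PGL_2$; since the boundary of an orbit has strictly smaller dimension than the orbit, each of these orbits is closed, and in particular $\PGL_2\cdot y$ is closed in $\Rat_d(N)$. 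Thus $\PGL_2$, which is reductive, acts on the affine $\Z$-scheme $\Rat_d(N)$ with every point stable, so the geometric invariant theory formation of quotients over $\Z$ (as carried out for $M_d$ in~\cite{mdgit}) yields the geometric quotient $M_d(N)=\Rat_d(N)/\PGL_2$ as a scheme over $\Z$, together with a natural forgetful morphism $M_d(N)\to M_d$.

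I expect the genuine content --- and the main obstacle --- to be the stability bootstrapping of the last paragraph: the existence of $M_d$ does not by itself give the existence of $M_d(N)$, and one really must transport stability along the finite equivariant map $\pi$ before GIT can be invoked on $\Rat_d(N)$. The integrality remark in the first paragraph is a minor but genuine piece of bookkeeping, and everything else is formal.
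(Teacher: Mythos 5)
Your proof is correct and shares the paper's overall GIT outline --- realize $\Rat_d(N)$ as a closed subscheme of $\PP^1\times\Rat_d$, check equivariance, verify stability, form the quotient over $\Z$ --- but your route to stability is genuinely different and more self-contained. The paper works with $\SL_2$, cites \cite{mdgit} (where $\Rat_d(N)$ appears as $\Per_N^*$) for the scheme structure, and asserts stability by noting that $\Res(F_1,G_1)$ is an $\SL_2$-invariant form not vanishing on $\Rat_d(N)$ together with finiteness of stabilizers from~\cite[Proposition~4.65]{ads}; the closed-orbit condition is delegated to \cite{git} and \cite{mdgit}, and the resultant form by itself only sees the $\Rat_d$ factor of the ambient $\PP^1\times\PP^{2d+1}$. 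You instead prove that $\pi\colon\Rat_d(N)\to\Rat_d$ is finite (proper plus quasi-finite), deduce that $\Rat_d(N)$ is affine, and read off closedness of every $\PGL_2$-orbit from the fact that each orbit inside the closed, invariant, finite-over-$\PGL_2\cdot x$ set $\pi^{-1}(\PGL_2\cdot x)$ already has the maximal dimension $\dim\PGL_2$, hence has empty boundary. That gives stability directly in the affine formulation and makes explicit the transport of stability along $\pi$ that the paper leaves implicit. Your Gauss's-lemma verification that $\Phi_{N,\phi}^*\in\Z[a_i,b_i][x,y]$ is likewise genuine bookkeeping that the paper carries out elsewhere (after Lemma~\ref{leadx}) but does not repeat in this proof.
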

 
 \begin{proof}
For convenience, and following~\cite{mdgit}, we use $\SL_2$ rather than $\PGL_2$ for this proof.  Because over any algebraically closed field $\overline K$, the map $\SL_2(\overline K) \to \PGL_2(\overline K)$ is surjective with finite kernel, the results will still be applicable to the moduli spaces as defined above.

In~\cite{mdgit}, $\Rat_d(N)$ is called $\Per_N^*$, and is shown to be a reduced closed subscheme of $\PP^1_{\Rat_d} = \PP^1 \times \Rat_d$.  
A point of $\Rat_d$ is given by $[F_1, G_1] = [a_d:a_{d-1}:\cdots: a_0:b_d:b_{d-1}:\cdots:b_0]$ corresponding to a rational map 
\begin{align*}
\phi \colon \PP^1 &\to \PP^1 \\ 
[x:y] &\mapsto [a_d x^d+ a_{d-1} x^{d-1}y + \cdots + a_0 y^d :
b_d x^d+ b_{d-1} x^{d-1} y + \cdots  + b_0 y^d],
\end{align*}
with $\Res(F_1, G_1) \neq 0$.
A point of $\Rat_d(N)$, is given by $[(X : Y), (F_1, G_1)]$ such that $[F_1, G_1] \in \Rat_d$ and for $\phi$ given by $[F_1: G_1]$ as above,  $\Phi^*_{N,\phi}(X,Y) = 0$.

The proof for $M_d(N)$ follows the techniques of Proposition~4.2 in~\cite{git}.
We define the action of $\SL_2$ on points in $\PP^1_{\Rat_d}$ as follows.
Let 
\[f =
\left( \begin{matrix}
\alpha & \beta \\
\gamma & \delta\\
\end{matrix} \right) \in \SL_2.
\]
The action on points of $\Rat_d(N)$ is
\begin{align*}
[(X, Y) , &(F_1(x,y), G_1(x,y))] \mapsto [( \delta X - \beta Y , -\gamma X+\alpha Y),\\
&  
 \left (\delta F_1(\alpha x + \beta y, \gamma x + \delta y) - \beta G_1(\alpha x + \beta y, \gamma x + \delta y) ,\right.\\
&\left. \qquad -\gamma F_1(\alpha x + \beta y, \gamma x + \delta y)+\alpha G_1(\alpha x + \beta y, \gamma x + \delta y)\right)].
\end{align*}

If $[X:Y]$ is a point of formal period $N$ for $\phi$, then 
$f^{-1}[X:Y] = [\delta X - \beta Y : -\gamma X+\alpha Y]$ is a point of formal period $N$ for $\phi^f = f^{-1} \phi f$, so $\Rat_d(N)$ is an $\SL_2$ invariant set given the action defined above.  

Further, the resultant form $\Res(F_1, G_1)$ is an $\SL_2$-invariant homogeneous polynomial which does not vanish on points of $\Rat_d(N)$.  The stabilizer in $\SL_2$ of any rational map $\phi$ is finite~\cite[Proposition 4.65]{ads}.  
 Therefore, points of $\Rat_d(N)$ are stable under the $\SL_2$ action.  

Since $\Rat_d(N)$ is an $\SL_2$-invariant and $\SL_2$-stable subset of $\PP^1 \times \PP^{2d+1}$, the geometric quotient 
$ \Rat_d(N) / \SL_2$ exists as a scheme over~$\Z$.
\end{proof}

\begin{remark}
Because for every $d \geq 2$, we have rational maps $\phi \in \Rat_d$ with nontrivial $\PGL_2$ automorphisms, we know that $M_d$ cannot be a fine moduli space.  However, from~\cite[Proposition 4.65]{ads}, for every $d\geq 2$, there is an $r_d$ such that if $\phi \in \Rat_d$, then $\left| \Aut(\phi) \right| \leq r_d$.  So for $N$ large enough, we expect that $M_d(N)$ should be a fine moduli space, since no automorphism of $\phi \in \Rat_d$ can fix  all $N$ marked points.
\end{remark}

Let $\mathfrak C_n$ be the subgroup of $\PGL_2$ generated by $z \mapsto \zeta_n z$ for $\zeta_n$ a primitive $n^{\text{th}}$ root of unity.  

\begin{define}
\begin{align*}
\Rat_d(\mathfrak C_n) &= 
\{\phi \in \Rat_d \colon  \mathfrak C_n  \text{ is a subgroup of } \Aut(\phi)\},\\
M_d(\mathfrak C_n) &= \text{the image of } \Rat_d(\mathfrak C_n) \text{ under the projection } \Rat_d \to M_d, \\
\Rat_d(N, \mathfrak C_n) &= 
\{\phi \in \Rat_d(N) \colon  \mathfrak C_n  \text{ is a subgroup of } \Aut(\phi)\},\\
M_d(N, \mathfrak C_n) &= \text{the image of } \Rat_d(N, \mathfrak C_n) \text{ under the projection } \\
&\quad\Rat_d(N) \to M_d(N). \\
\end{align*}
 \end{define}
 
 \begin{prop}
 If $n$ is relatively prime to the characteristic of $K$, then the following hold for all $d \geq 2$:
 \begin{enumerate}[\textup(a\textup)]
 \item
 $\Rat_d(\mathfrak C_n) $ exists as a closed subscheme of $\Rat_d$.
 \item
  $M_d(\mathfrak C_n) $ exists as a closed subscheme of $M_d$.
 \item\label{rdNcn}
 $\Rat_d(N, \mathfrak C_n) $ exists as a closed subscheme of $\Rat_d(N)$.
 \item\label{mdNcn}
  $M_d(N, \mathfrak C_n) $ exists as a closed subscheme of $M_d(N)$.
  \end{enumerate}
\end{prop}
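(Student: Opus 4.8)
The plan is to dispatch parts (a) and (c) by writing down explicit equations, and to reduce parts (b) and (d) to a single closedness statement about the $\PGL_2$-saturation, which is where essentially all the work lies. For (a): since $\mathfrak C_n=\langle h\rangle$ with $h\colon z\mapsto\zeta_n z$ (and $\zeta_n\in\overline K$ is a genuine primitive $n$-th root of unity precisely because $n$ is prime to $\ch K$), a map $\phi$ lies in $\Rat_d(\mathfrak C_n)$ if and only if $h\circ\phi=\phi\circ h$. Writing $\phi=[F_1:G_1]$ with $F_1=\sum a_i x^i y^{d-i}$ and $G_1=\sum b_i x^i y^{d-i}$, this is the identity of binary forms
\[
\zeta_n\,F_1(x,y)\,G_1(\zeta_n x,y)-G_1(x,y)\,F_1(\zeta_n x,y)=0,
\]
whose $2d+1$ coefficients are bilinear expressions in the $a_i,b_i$. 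These cut out a closed subscheme of $\PP^{2d+1}$ whose intersection with $\Rat_d$ is $\Rat_d(\mathfrak C_n)$; equivalently, $\Rat_d(\mathfrak C_n)$ is the fixed-point subscheme of the order-$n$ automorphism $\phi\mapsto\phi^h$ of the separated scheme $\Rat_d$, hence closed. Part (c) is then immediate: viewing everything inside $\PP^1\times\Rat_d$, one has $\Rat_d(N,\mathfrak C_n)=\Rat_d(N)\cap\bigl(\PP^1\times\Rat_d(\mathfrak C_n)\bigr)$, an intersection of closed subschemes, hence closed in $\Rat_d(N)$.

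For (b) and (d) the subtlety is that $\Rat_d(\mathfrak C_n)$ is not $\PGL_2$-invariant: conjugation moves the automorphism subgroup around. Let $\widetilde{\Rat_d(\mathfrak C_n)}:=\PGL_2\cdot\Rat_d(\mathfrak C_n)$; this is the locus of $\phi\in\Rat_d$ whose automorphism group contains an element of order $n$ (equivalently a $\PGL_2$-conjugate of $\mathfrak C_n$, since all cyclic subgroups of order $n$ in $\PGL_2$ are conjugate when $n$ is prime to $\ch K$), and it is exactly $\pi^{-1}\bigl(M_d(\mathfrak C_n)\bigr)$ for the projection $\pi\colon\Rat_d\to M_d$. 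Because $\pi$ is a geometric quotient, it suffices to prove that $\widetilde{\Rat_d(\mathfrak C_n)}$ is Zariski-closed in $\Rat_d$: then $M_d(\mathfrak C_n)$, being the image of a closed invariant subscheme under a geometric quotient, is a closed subscheme of $M_d$. The identical reduction handles (d): $\Rat_d(N)\cap\bigl(\PP^1\times\widetilde{\Rat_d(\mathfrak C_n)}\bigr)$ is the $\PGL_2$-saturation of $\Rat_d(N,\mathfrak C_n)$ inside $\Rat_d(N)$, and $\Rat_d(N)\to M_d(N)$ is again a geometric quotient.

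So the crux is to show that $\widetilde{\Rat_d(\mathfrak C_n)}$ is closed in $\Rat_d$, and this is the step I expect to be the main obstacle. Here I would use the uniform bound $|\Aut(\phi)|\le r_d$ from~\cite[Proposition~4.65]{ads} together with the hypothesis that $n$ is prime to $\ch K$, so that order-$n$ elements of $\PGL_2$ are semisimple and vary in a well-behaved family. Introduce the incidence correspondence $W=\{(\phi,g)\in\Rat_d\times\PGL_2:\ g\text{ has order }n,\ \phi^g=\phi\}$; its image under the first projection is $\widetilde{\Rat_d(\mathfrak C_n)}$, and the projection is quasi-finite because each fibre injects into the finite set of order-$n$ subgroups of $\Aut(\phi)$. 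One then compactifies the $\PGL_2$ factor — for instance inside $\PP(\mathrm{Mat}_2)\cong\PP^3$, or by lifting to $\SL_2$ and using that a finite-order matrix has bounded trace and determinant — so that the projection becomes proper and hence has closed image; it remains to check that the compactification adds nothing, i.e.\ that a one-parameter family of degree-$d$ maps whose order-$n$ automorphisms degenerate in $\PGL_2$ is forced out of $\Rat_d$ (the limiting forms $F_1,G_1$ acquire a common zero, so $\Res(F_1,G_1)=0$), unless the limit genuinely retains an order-$n$ automorphism. The commutation relation $g\circ\phi=\phi\circ g$, passed to the limit, is what should drive this; ruling out such "escaping automorphisms" is the part requiring real care. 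Everything afterwards — the passage through $\pi$ to (b) and (d), and the inheritance of the characteristic-$p$ restriction to prime $N$ in the companion corollary — is then formal, coming from exactly the same finiteness input.
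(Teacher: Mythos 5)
Your treatment of (a) and (c) matches the paper exactly: commuting with the generator $z\mapsto\zeta_n z$ yields polynomial conditions on the coefficients cutting out $\Rat_d(\mathfrak C_n)$ as a closed subscheme, and $\Rat_d(N,\mathfrak C_n)$ is then the closed intersection $\Rat_d(N)\cap\bigl(\PP^1\times\Rat_d(\mathfrak C_n)\bigr)$.

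For (b) and (d) you correctly put your finger on the point the paper glosses over: since $\Rat_d(\mathfrak C_n)$ is not $\PGL_2$-invariant, $\pi\bigl(\Rat_d(\mathfrak C_n)\bigr)$ is closed precisely when the saturation $\PGL_2\cdot\Rat_d(\mathfrak C_n)$ is closed, because a geometric quotient carries the quotient topology. The paper instead writes ``one can show that the projection $\Rat_d\to M_d$ is closed,'' which, for a submersive quotient map, is exactly the assertion that saturations of closed sets are closed --- so the two formulations are equivalent, but the paper gives no argument. Your incidence-correspondence sketch (the locus $W$ of pairs $(\phi,g)$ with $g$ an order-$n$ element of $\Aut(\phi)$, compactified in $\Rat_d\times\PP(\mathrm{Mat}_2)$) is a sensible route, and your quasi-finiteness observation is correct since $\Aut(\phi)$ is uniformly finite. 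But the decisive step --- showing that the boundary of the closure of $W$ in the compactification contributes no new $\phi\in\Rat_d$ --- is left as a flagged gap, as you yourself acknowledge, and there is an additional wrinkle you do not address: ``order exactly $n$'' is only a locally closed condition on $\PGL_2$, so the closure of $W$ may acquire points $(\phi,g)$ with $g$ of order a proper divisor of $n$, and one must also rule these out (or reformulate $W$) before properness of the compactified projection gives the desired closedness. So the proposal is an honest and more explicit account of what the paper's terse assertion is concealing, but the crucial closedness of the saturation is outlined rather than proved.
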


\begin{proof}
A rational map $\phi$ has the automorphism $z \mapsto \zeta_n z$ if $\zeta_n^{-1} \phi(\zeta_n z) = \phi(z)$.  Writing $\phi$ as in~\eqref{phiform}, a straightforward calculation shows that this yields polynomial conditions on the coefficients of $\phi$ and moreover that the polynomials are all defined over $\Q$. 

Since $M_d$ is a geometric quotient, one can show that the projection $\Rat_d \to M_d$ is closed, meaning that the image of $\Rat_d(\mathfrak C_p)$ is closed.  Hence $M_d(\mathfrak C_p)$is a closed subscheme of $M_d$.

Identical arguments prove parts~\eqref{rdNcn} and~\eqref{mdNcn}.
\end{proof}

\begin{prop}
The scheme $M_d(\mathfrak C_n)$ parameterizes conjugacy classes of rational maps of degree~$d$ having an automorphism of primitive order~$n$.  Similarly, $M_d(N, \mathfrak C_n)$ parameterizes conjugacy classes of rational maps of degree~$d$ having an automorphism of primitive order~$n$, together with a point of formal period~$N$.
\end{prop}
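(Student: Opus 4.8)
The plan is to argue entirely at the level of geometric points and to reduce the whole statement to one normal‑form fact about finite‑order elements of $\PGL_2\left(\overline K\right)$. Since $M_d$ is a geometric quotient of $\Rat_d$ by $\PGL_2$, its $\overline K$‑points are precisely the $\PGL_2\left(\overline K\right)$‑conjugacy classes of degree‑$d$ rational maps, and by construction the $\overline K$‑points of $M_d(\mathfrak C_n)$ are those classes $[\phi]$ having a representative $\psi$ with $\mathfrak C_n\subseteq\Aut(\psi)$. One inclusion is immediate: if $\mathfrak C_n\subseteq\Aut(\psi)$ then the generator $z\mapsto\zeta_n z$ is an automorphism of $\psi$ of order exactly $n$, and since $\Aut(\phi^f)=f^{-1}\Aut(\phi)f$ for all $f\in\PGL_2$, every map in the class $[\psi]$ has an automorphism of order exactly $n$. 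The content of the proposition is the reverse inclusion (and its marked‑point analogue).

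For the reverse inclusion I would first prove the lemma: if $h\in\PGL_2\left(\overline K\right)$ has order $n$ and $\gcd(n,\ch K)=1$, then $h$ is $\PGL_2\left(\overline K\right)$‑conjugate to $g\colon z\mapsto\zeta_n z$, and moreover $\langle g\rangle=\mathfrak C_n$. To see this, lift $h$ to $\widetilde h\in\GL_2\left(\overline K\right)$; then $\widetilde h^{\,n}$ is scalar, so after rescaling $\widetilde h$ by a suitable $n$‑th root we may assume $\widetilde h^{\,n}=I$. The polynomial $X^n-1$ is separable because $n$ is prime to the characteristic, hence $\widetilde h$ is diagonalizable; putting it in diagonal form, its image $h$ acts as $z\mapsto(\omega_1/\omega_2)z$ for the eigenvalues $\omega_1,\omega_2$, and the order of $h$ equals the order of $\omega_1/\omega_2$ in $\overline K^*$, namely $n$. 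So $\omega_1/\omega_2$ is a primitive $n$‑th root of unity and generates the same subgroup of $\overline K^*$ as $\zeta_n$, giving both claims. Now given any $\phi$ with an automorphism $h$ of order $n$, write $h=fgf^{-1}$; a one‑line check (using $\phi^h=\phi$) shows that $g$, and therefore $\mathfrak C_n=\langle g\rangle$, lies in $\Aut(\phi^f)$, so $\phi^f\in\Rat_d(\mathfrak C_n)$ and $[\phi]=[\phi^f]\in M_d(\mathfrak C_n)$.

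For $M_d(N,\mathfrak C_n)$ I would run the identical argument on pairs. Conjugation by $f$ sends $(\alpha,\phi)$ to $(f^{-1}(\alpha),\phi^f)$, and formal period is a conjugation invariant --- this is exactly the fact, already used in the proof that $\Rat_d(N)$ is $\SL_2$‑invariant, that $f^{-1}(\alpha)$ has formal period $N$ for $\phi^f$ whenever $\alpha$ does for $\phi$. Hence if $\phi$ has an automorphism $h=fgf^{-1}$ of order $n$ and $\alpha$ has formal period $N$, then $(f^{-1}(\alpha),\phi^f)\in\Rat_d(N,\mathfrak C_n)$ and maps to the class of $(\alpha,\phi)$ in $M_d(N)$; the reverse inclusion and the ``generator has order $n$'' direction are verbatim as above.

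The only genuinely nontrivial ingredient is the normal‑form lemma for $\PGL_2$, and within it the diagonalizability step, which is precisely where $\gcd(n,\ch K)=1$ is used: in characteristic $p\mid n$ there are unipotent (hence non‑diagonalizable) elements of $p$‑power order and the clean classification fails. I would also flag at the outset that ``primitive order $n$'' is read as ``order exactly $n$'' (matching the convention for $n=p$ prime used earlier); with that reading the correspondence is a bijection, and everything outside the $\PGL_2$ lemma is bookkeeping with the definitions of $\Rat_d(\mathfrak C_n)$, of the geometric quotient $M_d$, and of the images defining $M_d(\mathfrak C_n)$ and $M_d(N,\mathfrak C_n)$.
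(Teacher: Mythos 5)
Your proposal is correct and follows essentially the same route as the paper: both reduce the statement to the normal-form fact that an order-$n$ element of $\PGL_2(\overline K)$ with $\gcd(n,\ch K)=1$ is conjugate to $z\mapsto\zeta_n z$, and then transport the automorphism (and, in the marked case, the point of formal period) along the conjugation. The paper phrases the normal form as ``move the two fixed points of $f$ to $0$ and $\infty$,'' while you derive it by lifting to $\GL_2$ and diagonalizing via separability of $X^n-1$; these are the same argument, with yours just spelling out the step that guarantees two distinct fixed points.
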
  

\begin{proof}
A conjugacy class of maps $[\phi]\in M_d$ is in $M_d(\mathfrak C_n)$ exactly when there is some $\psi \in [\phi]$ such that $\mathfrak C_n$ is a subgroup of $\Aut(\psi)$.  Such maps clearly have automorphisms of primitive order $n$.

Now let $[\phi] \in M_d$ such that some $\psi \in [\phi]$ has an automorphism $f \in \PGL_2$ of primitive order $n$.  Then $\langle f \rangle$ is a subgroup of $\Aut(\psi)$ isomorphic to $\mathfrak C_n$.  We will show that in fact $[\phi] \in M_d(\mathfrak C_n)$.

Since $n$ is relatively prime to the characteristic of $K$, we see that $\langle f \rangle$ is linearly conjugate to $\langle \zeta_n z \rangle$ for an appropriate choice of $n$, simply by moving the fixed points of $f$ to~$0$ and the point at infinity.  Thus, for some $h \in \PGL_2$, $\Aut(\psi^h)$ contains $\mathfrak C_n$ as a subgroup.  Since $\psi^h \in [\phi]$, we are done.

The proof for $M_d(N, \mathfrak C_n)$ is the same.
\end{proof}

\section{Irreducibility for $M_2(N)$}\label{qpvir}
We continue with the notation of Section~\ref{makemd}.  In this section only, we assume $K$ is a number field and not just an arbitrary field.
 In addition, we define

\begin{tabular}{l l}
$\nu_d(N)= \displaystyle\sum_{k\mid N} \mu\left(N/k\right) d^k$ & the degree of $\Phi_N^*(x,y) $ for $N>1$.\\
\end{tabular}

In proving their irreducibility results, both Bousch~\cite{bousch} and Morton~\cite{algcurve} use the fact that any quadratic polynomial is conjugate to a unique map of the form $f_c(z) = z^2+c$.  That is, we have a convenient normal form, and hence can prove results about conjugacy classes by working with the normal form alone.  
The situation for quadratic rational maps is slightly more complicated, so we begin this section by discussing the normal form for these maps which will be useful in the sequel. 

\begin{lemma}\label{normform}
Let $\phi\colon\PP^1 \to\PP^1$ be a rational map of degree~$2$ defined over a number field $K$.  If there is an element $h \in \PGL_2$ such that $h^{-1}\phi h = \phi$, then $\phi$ is conjugate to exactly one of the following maps:
\begin{itemize}
\item
$\displaystyle \phi(z) = z + \frac 1 z$, or
\item
$\displaystyle\phi_{a,a}(z) = \frac{z^2+az}{az+1}$, where $a \neq \pm1$.
\end{itemize}
\end{lemma}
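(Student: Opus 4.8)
The plan is to combine the finiteness of $\Aut(\phi)$, cited above, with the explicit description $M_2\cong\A^2$. Since $h$ is nontrivial and lies in the finite group $\Aut(\phi)$, it has finite order $n\ge 2$; as $K$ has characteristic $0$, $h$ is diagonalizable over $\overline K$, hence conjugate to $z\mapsto\zeta z$ for a primitive $n\tha$ root of unity $\zeta$. Because the conclusion concerns only the conjugacy class of $\phi$, I would first replace $\phi$ by a conjugate so that $h(z)=\zeta z$; the automorphism relation then becomes the functional equation $\phi(\zeta z)=\zeta\,\phi(z)$.

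Writing $\phi=F/G$ with $F=a_2z^2+a_1z+a_0$ and $G=b_2z^2+b_1z+b_0$, the identity $\phi(\zeta z)=\zeta\phi(z)$ forces $F(\zeta z)=c\zeta F(z)$ and $G(\zeta z)=cG(z)$ for a single scalar $c$, which on comparing coefficients yields $a_2(\zeta-c)=b_1(\zeta-c)=a_1(1-c)=b_0(1-c)=a_0(c\zeta-1)=b_2(\zeta^2-c)=0$. I would then run the case analysis: since $\deg\phi=2$, either $a_2\ne 0$, forcing $c=\zeta$, or $b_2\ne 0$, forcing $c=\zeta^2$ (both cannot hold, else $\zeta=\zeta^2$). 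Tracking which coefficients survive, and checking each time that $F$ and $G$ stay coprime so that $\phi$ really has degree $2$, one finds $n\in\{2,3\}$ and that, after a further diagonal conjugation $z\mapsto\lambda z$, $\phi$ is conjugate to one of $s\bigl(z+\tfrac1z\bigr)$ or $\tfrac{tz}{z^2+1}$ (the two $n=2$ branches, interchanged by $z\mapsto 1/z$) or $k/z^2$ (the $n=3$ branch), for some $s,t,k\in\overline K^\times$.

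To match each of these three shapes with a member of the list, I would use fixed-point multipliers. A short computation gives the multiplier multiset of $\phi_{a,a}$ as $\{a,a,\tfrac{2}{a+1}\}$, that of $z+\tfrac1z$ as $\{1,1,1\}$, and those of $s(z+\tfrac1z)$, $\tfrac{tz}{z^2+1}$, $k/z^2$ as $\{1/s,\,2s-1,\,2s-1\}$, $\{t,\,\tfrac{2-t}{t},\,\tfrac{2-t}{t}\}$, $\{-2,-2,-2\}$ respectively. Since $(\sigma_1,\sigma_2)$ are the coordinates of $M_2\cong\A^2$ and are determined by (and determine) the multiplier multiset, two degree-$2$ maps are conjugate over $\overline K$ precisely when their multisets agree. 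Matching multisets then puts $\phi$ in the class of $z+\tfrac1z$ in the degenerate cases $s=1$ or $t=1$ (note $\{-2,-2,-2\}\ne\{1,1,1\}$), and otherwise in the class of $\phi_{a,a}$ with $a=2s-1$, $a=\tfrac{2-t}{t}$, or $a=-2$; in each case $a\ne\pm 1$, since $\phi_{1,1}=\mathrm{id}$ and $\phi_{-1,-1}(z)=-z$ have degree $1$. The same bookkeeping gives the ``exactly one'' claim: $\{1,1,1\}=\{a,a,\tfrac{2}{a+1}\}$ would force $a=1$, while $\{a,a,\tfrac{2}{a+1}\}=\{a',a',\tfrac{2}{a'+1}\}$ forces $a'=\tfrac{2}{a+1}$ and $a=\tfrac{2}{a'+1}$, whence $a=a'$.

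The real obstacle is the case analysis of the second step: it is tempting to conclude too quickly that $n\ge 3$ is impossible, which would miss the solution $\phi(z)=k/z^2$ (it is conjugate to $\phi_{-2,-2}$), and one must watch coprimality at each branch to discard spurious degree-$\le 1$ maps. Everything else reduces to bookkeeping once one invokes the single structural input that $\overline K$-conjugacy classes of quadratic rational maps are detected by $(\sigma_1,\sigma_2)$.
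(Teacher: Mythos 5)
Your proof is correct, but it takes a genuinely different route from the paper's. The paper leans directly on Milnor's classification: every quadratic $\phi$ is conjugate to $z+\tfrac1z$ (one fixed point) or $\phi_{a,b}$ with $a,b$ the multipliers at $0,\infty$; it then uses that a nontrivial automorphism permutes the fixed points, that interchanged fixed points have equal multipliers (differentiate $\phi h = h\phi$), and the fixed-point identity $\sum 1/(1-\lambda_i)=1$ to rule out $a=\pm1$, concluding $\phi\sim\phi_{a,a}$. You instead diagonalize $h$ to $z\mapsto\zeta z$ using characteristic $0$ and finiteness of $\Aut(\phi)$, turn the automorphism relation into the functional equation $\phi(\zeta z)=\zeta\phi(z)$, and solve it coefficient-by-coefficient; this discovers as a bonus that the order of a nontrivial automorphism of a degree-$2$ map is $2$ or $3$, and produces the three explicit one-parameter shapes $s(z+\tfrac1z)$, $tz/(z^2+1)$, $k/z^2$. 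You then close by computing multiplier multisets and invoking that $M_2\cong\A^2$ with coordinates $(\sigma_1,\sigma_2)$ — i.e.\ that $\overline K$-conjugacy classes of quadratic maps are detected by multipliers — which is the same deep input (Milnor/Silverman) the paper uses, just applied at the end rather than the start. I checked your coefficient constraints, the coprimality bookkeeping in each branch, the multiplier multisets $\{1/s,2s-1,2s-1\}$, $\{t,\tfrac{2-t}{t},\tfrac{2-t}{t}\}$, $\{-2,-2,-2\}$, $\{a,a,\tfrac{2}{a+1}\}$, and the uniqueness argument; all are correct. The paper's proof is shorter because it never has to run the case analysis, while yours is more elementary in spirit and exposes the $n\le3$ bound and the hidden order-$3$ case $1/z^2\sim\phi_{-2,-2}$ explicitly.
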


Note that $\phi_{a,a}(z)$ may not be defined over $K$, but is defined over at most a cubic extension of $K$.

\begin{proof}
The maps described all have obvious automorphisms $h \in \PGL_2$: In the first case, the automorphism is $z \mapsto -z$; in the second case, it is $z \mapsto 1/z$.  It remains to show that any map with an automorphism is conjugate to exactly one of these maps.

Let $\phi\colon\PP^1 \to \PP^1$ be a rational map of degree 2, defined over $\C$.  Using the fact that such a map is completely determined by the set of multipliers of the fixed points, Milnor has shown in~\cite{milnrat} that $\phi$ must be conjugate to one of the following 
\begin{align*}
\phi(z) &= z + \frac 1 z \quad \text{ (if $\phi$ has a single fixed point), or}\\
\phi_{a,b}(z) &= \frac{z^2+az}{bz+1},
\end{align*}
where $a$ and $b$ are the multipliers of the fixed points $0$ and $\infty$ respectively, and $ab\neq 1$.  

If $\phi$ has a nontrivial automorphism $h$ as described, then $h$ must permute the fixed points of $\phi$.  Taking derivatives on both sides of $\phi h = h\phi$, we see that the multipliers of the interchanged fixed points must be equal. \label{equalmultsfp}

If $\phi$ has a single fixed point, it is a simple matter to check that the multiplier at that fixed point must be $1$ and that $\phi$ is then conjugate to the first map given in the the Lemma.

If $\phi$ has two distinct fixed points, then it has one fixed point of multiplicity two.  This fixed point has multiplier~$1$.
  So the set of multipliers is $\{a, 1, 1\}$ where $a\neq 1$.  By Milnor's argument, then, $\phi$ is conjugate to the map $\phi_{a,1}= \frac{z^2+az}{z+1}$, which has a double fixed point at infinity and another fixed point at~$0$.  Any automorphism of such a map must fix both of these points, so it is of the form $h(z) = k z$, and a computation shows that we must have $k=1$.  In other words, these maps have no nontrivial automorphisms.

Finally, if $\phi$ has three distinct fixed points, we know from the remarks above that at least two of them must have equal multipliers.  The identity given in equation~\eqref{fpident} on page~\pageref{fpident} shows that it is impossible for the two equal multipliers to be~$-1$.  If a fixed point has multiplier~$1$, then it is a double-root of the equation $\Phi_1(x,y)=0$.  In other words,  it is a fixed point of multiplicity two, which cannot be the case here.  So the set of multipliers is $\{a,a,b\}$ with $a \neq \pm1$.  By Milnor's argument, then, $\phi$ is conjugate to a unique map of the form $\phi_{a,a}$.
\end{proof}

The map $\phi(z) = z + \frac 1 z$ has a single fixed point at infinity, with multiplier~$1$, so using the coordinates given by equation~\eqref{m2coords}, this map corresponds to the point $(3,3)  \in M_2$.   Therefore, we have the following quasi-finite map:
\begin{align*}
\A^2 \smallsetminus \{ab = 1\} &\longrightarrow M_2\smallsetminus \{(3,3)\}\\
(a,b) &\longmapsto \left[\phi_{a,b} \right].
\end{align*}
Here $ \left[\phi_{a,b}\right]$ denotes the conjugacy class of $\phi_{a,b}(z)$ in the moduli space $M_2$.  (Note that the map is generically $6$-to-$1$, but is not a finite map.)
This map is ramified only over the symmetry locus --- the family of maps in $M_2$ with a nontrivial $\PGL_2$ automorphism --- and over the locus of maps with a fixed point of multiplicity at least two.

\begin{lemma}\label{leadx}
Let 
$\phi_{a,b}(x,y) = \left[x^2 + axy: bxy + y^2\right]$. 
 For all $N>1$, the  coefficient of $x^{\nu_2(N)}$ in $\Phi_{N,\phi_{a,b}}^*$ \textup(that is, the lead coefficient in $x$\textup) is $C_N(b)$, the $N^{\text{th}}$ cyclotomic polynomial in $b$.  By symmetry, the coefficient of $y^{\nu_2(N)}$ is $ C_N(a)$.
\end{lemma}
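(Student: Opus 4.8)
The plan is to compute the homogeneous polynomials $F_N, G_N$ for $\phi_{a,b}(x,y) = [x^2 + axy : bxy + y^2]$ modulo a power of $y$, tracking just enough to extract the leading coefficient in $x$. Write $\phi^N(x,y) = [F_N : G_N]$ via the recursion $F_N = F_{N-1}(F_1, G_1)$, $G_N = G_{N-1}(F_1, G_1)$. The first observation is that $\deg F_N = \deg G_N = 2^N$ and that the coefficient of $x^{2^N}$ in $F_N$ is $1$ while the coefficient of $x^{2^N}$ in $G_N$ is $b^{2^N - 1}$; both follow by an easy induction from $F_1 = x^2 + axy$, $G_1 = bxy + y^2$, since the top-degree-in-$x$ part of $F_1$ is $x^2$ and of $G_1$ is $bxy$. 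Then $\Phi_{N,\phi} = yF_N - xG_N$ has $x$-degree $2^N + 1$, and since the $x^{2^N+1}$ terms of $yF_N$ and $xG_N$ cancel (they are $y\cdot x^{2^N}$ against $x\cdot x^{2^N}$ — wait, these do not cancel), I should instead track the coefficient of $x^{2^N} y$: in $yF_N$ it is $1$, and in $xG_N$ it is $b^{2^N-1}$, so the coefficient of $x^{2^N}y$ in $\Phi_{N,\phi}$ is $1 - b^{2^N-1}$, while the top coefficient (of $x^{2^N+1}$) vanishes. More to the point, I will show by induction that, as a polynomial in $x$ with coefficients in $\overline{K}[b][y]$, one has $\Phi_{N,\phi_{a,b}}(x,y) \equiv (b^{2^N-1} - 1)\, x^{2^N}\, y \pmod{y^2}$ up to sign, so the "leading behaviour in $x$" of each $\Phi_{k,\phi}$ is governed by $b^{2^k-1}-1$.

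The second step is to pass from $\Phi_{N,\phi}$ to the dynatomic polynomial $\Phi_{N,\phi}^* = \prod_{k\mid N}(\Phi_{k,\phi})^{\mu(N/k)}$ of $x$-degree $\nu_2(N) = \sum_{k\mid N}\mu(N/k)2^k$. Taking the coefficient of the top power $x^{\nu_2(N)}$ is multiplicative in this product: the leading coefficient in $x$ of $\Phi_{N,\phi}^*$ is $\prod_{k\mid N}(\ell_k)^{\mu(N/k)}$, where $\ell_k$ is the leading coefficient in $x$ of $\Phi_{k,\phi}$ — provided that no lower-order cancellation occurs, which is exactly where the effectivity of $Z_N^*(\phi)$ (so that $\Phi_N^*$ is genuinely a polynomial of the expected degree $\nu_2(N)$, not a proper rational function) is used. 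From the first step, $\ell_k$ is (up to a unit independent of $b$) $b^{2^k-1} - 1$ — or more precisely, after dehomogenizing to track the $x^{2^k}y$ coefficient, it is $b^{2^k-1}-1$. Hence the leading coefficient in $x$ of $\Phi_{N,\phi}^*$ equals $\prod_{k\mid N}(b^{2^k-1}-1)^{\mu(N/k)}$ up to sign.

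The third step is the identification $\prod_{k\mid N}(b^{2^k - 1} - 1)^{\mu(N/k)} = \pm\, C_N(b)$, the $N$th cyclotomic polynomial. This is a standard Möbius-inversion computation: factor $b^{2^k-1}-1 = \prod_{d \mid 2^k - 1} C_d(b)$, substitute, and reorganize the double product; the combinatorial input is that $\prod_{k \mid N}(b^{m(k)}-1)^{\mu(N/k)}$ collapses to the cyclotomic polynomial whose roots are the primitive elements at "level $N$" in the relevant multiplicative sense. Alternatively, and more cleanly, I would argue directly on roots: over $\overline{K}$ (or $\C$), a root $b_0$ of the leading coefficient is a value of $b$ at which a point of formal period $N$ escapes to $x = \infty$ (i.e. to $[1:0]$), and for $\phi_{a,b}$ the point $[1:0]$ is fixed with multiplier $b$; the orbit structure forces $b_0$ to be exactly a primitive $N$th root of unity — matching $C_N(b)$ degree-for-degree via $\deg C_N = \varphi(N)$ against the drop in $x$-degree. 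The $y$-statement then follows from the symmetry $\phi_{a,b}^{\,\iota} = \phi_{b,a}$ under $\iota : z \mapsto 1/z$ (equivalently $[x:y]\mapsto[y:x]$), which swaps $F_N \leftrightarrow G_N$ up to reindexing and hence swaps the roles of $x$ and $y$ and of $a$ and $b$ in $\Phi_{N,\phi}^*$.

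The main obstacle I anticipate is the second step — ensuring that when the $\mu(N/k) = -1$ factors are divided out, the leading coefficient in $x$ behaves multiplicatively with no unexpected vanishing or degree drop. Concretely, I need that each $\Phi_{k,\phi}$ (for $k \mid N$) has $x$-degree exactly $2^k$ after the appropriate normalization and that the leading coefficients $\ell_k$ are coprime enough — as polynomials in $b$ — that the quotient $\Phi_N^*$ retains $x$-degree $\nu_2(N)$ with leading coefficient literally the quotient of the $\ell_k$'s. The effectivity result of Morton–Silverman quoted in Section~\ref{prelim} guarantees $\Phi_N^*$ is a polynomial of degree $\nu_2(N)$; combined with the explicit factorizations $\ell_k = b^{2^k-1}-1$ and the fact that $\gcd$ issues among these are controlled by the cyclotomic factorization, this pins down the leading coefficient exactly. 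Everything else is routine induction and bookkeeping.
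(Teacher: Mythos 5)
Your proposal follows the paper's strategy in outline (track the lead $x$-coefficient of $\Phi_{k,\phi_{a,b}}$, then take the M\"obius product), but the central computation is wrong, and the error propagates into a false M\"obius identity.

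Concretely: you claim that the coefficient of $x^{2^N-1}y$ in $G_N$ is $b^{2^N-1}$, hence that the lead $x$-coefficient of $\Phi_{k,\phi_{a,b}}$ is $1-b^{2^k-1}$. The correct value is $b^{N}$, not $b^{2^N-1}$. With $G_1 = bxy+y^2$ and, say, the recursion $G_N = bF_{N-1}G_{N-1}+G_{N-1}^2$, the lead $x$-term of $G_N$ comes from $b\cdot x^{2^{N-1}}\cdot b^{N-1}yx^{2^{N-1}-1} = b^{N}yx^{2^N-1}$; one extra factor of $b$ per iteration, not $2^{N-1}$ factors. (A direct check: $G_2 = b(x^2+axy)(bxy+y^2)+(bxy+y^2)^2 = b^2x^3y + \cdots$, so the lead coefficient is $b^2$, not $b^3$.) Consequently the lead $x$-coefficient of $\Phi_{k,\phi_{a,b}}$ is $y(1-b^k)$, and the product one needs is $\prod_{k\mid N}(b^k-1)^{\mu(N/k)}$, which is the textbook M\"obius formula for $C_N(b)$. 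Your product $\prod_{k\mid N}(b^{2^k-1}-1)^{\mu(N/k)}$ is not $\pm C_N(b)$; for $N=2$ it gives $(b^3-1)/(b-1)=b^2+b+1=C_3(b)$ rather than $C_2(b)=b+1$, so the claimed ``standard M\"obius-inversion computation'' fails. The earlier confusion about the $x$-degree of $\Phi_{N,\phi}$ (you first say $2^N+1$, then backtrack) is a symptom of the same miscount: $yF_N$ and $xG_N$ both have $x$-degree exactly $2^N$, so $\Phi_{N,\phi}$ has $x$-degree $2^N$ with lead coefficient $y(1-b^N)$.

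Your sketched alternative --- reading off the roots of the lead coefficient from the fixed point $[1:0]$, whose multiplier is $b$, and invoking the Morton--Silverman criterion for when $a^*_{[1:0]}(N)>0$ --- is a genuinely different and sound idea, and could be turned into a proof (it would identify the roots and their multiplicities, then a separate argument would pin down the scalar). But as written it is only a heuristic, and the explicit computation you give in its place is incorrect. The paper's argument is the direct induction: prove $F_N$ is monic in $x$ of degree $2^N$ and $G_N$ has lead $x$-term $b^N y x^{2^N-1}$, conclude $c_k = y(1-b^k)$, and then $\prod_{k\mid N}(b^k-1)^{\mu(N/k)} = C_N(b)$ for $N>1$.
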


\begin{proof}
Let
$F_1(x,y) = x^2 + axy$, $G_1(x,y) = bxy + y^2$,  and define the iterates
\begin{eqnarray}
F_N(x,y) &=& F_{N-1}^2 + a F_{N-1}G_{N-1},\label{fniterates}\\
G_N(x,y) &=& b F_{N-1} G_{N-1}  +  G_{N-1}^2\label{gniterates}.
\end{eqnarray}

\begin{claim}
As polynomials in $x$, the iterate $F_N$ is monic of degree $2^N$, and $G_N$ has degree $2^N - 1$ with lead coefficient $b^N y$.
\end{claim}

A simple induction argument shows that for $N \geq 1$, we have $\deg_x(F_N) = \deg_x(G_N)+1$.   
The claim about the degrees then follows immediately from this and  equations~\eqref{fniterates} and~\eqref{gniterates}.  The fact that $F_N$ is monic in $x$ also follows  from the recursive definition.  It remains to compute the coefficient of $x^{2^N-1}$ in $G_N(x,y)$.   The claim holds for $N = 1$ by  definition. 
Further, the lead $x$ term in $G_N(x,y)$ comes from the product  $b F_{N-1}G_{N-1}$.  We again proceed by induction.  
\begin{align*}
G_N(x,y) &= b\left(x^{2^{(N-1)}} + \text{lower order terms in $x$}\right)\left(b^{N-1}yx^{2^{(N-1)}-1} + \text{l.o.t. in $x$}\right)\\
&   \qquad  + \left(\text{l.o.t. in $x$} \right) \\
&= b^N y x^{2^N-1} + \text{lower order terms in $x$.} 
\end{align*}

Let  $c_k$ be the coefficient of $x^{2^k}$ in $\Phi_{k,\phi_{a,b}}(x,y) = y F_k(x,y)  - x G_k(x,y)$ and $c_k^*$ be the coefficient of  $x ^{\nu_2(k)}$ in $\Phi_{k,\phi_{a,b}}^*$.  By the above results, $c_k = y(1- b^k)$.   Since 
$$
\Phi_{N,\phi_{a,b}}^*(x,y) = \prod_{k \mid N} \left(y F_k(x,y)  - x G_k(x,y)\right)^{\mu(N/k)},
$$
 and since $\Phi_{N,\phi_{a,b}}^*(x,y)$ is a polynomial,
 \begin{align*}
c_N^* &= \prod_{k \mid N} c_k^{\mu(N/k)}  =  \prod_{k \mid N} \left(y(1-b^k)\right)^{\mu(N/k)}\\
& =  \prod_{k \mid N} y^{\mu(N/k)} \prod_{k \mid N}(1-b^k)^{\mu(N/k)}= \prod_{k\mid N}(b^k-1)^{\mu(N/k)},
\end{align*}
 where the last equality follows because $N>1$.   This is $C_N(b)$, as desired.
\end{proof}

Mobius inversion gives $\Phi_{N,\phi_{a,b}} = \prod_{M\mid N}\Phi_{M,\phi_{a,b}}^*(x,y) $.  In other words, $\Phi_{N,\phi_{a,b}}$ factors over $\Q(a,b)$.
Gauss's lemma shows that it factors over $\Z[a,b]$.   By Lemma~\ref{leadx} above, we see that the polynomials $\Phi_{M,\phi_{a,b}}^*(x,y)$ in the product each have content~$1$, meaning that $\Phi^*_{N,\phi_{a,b}}\in \Z[a,b,x,y]$.
So for every $N$, 
$$
\left\{\Phi_{N,\phi_{a,b}}^*\left(x,y,a,b\right)=0\right\} \subseteq \PP^1 \times \A^2\smallsetminus\{ab=1\}
$$
  defines a quasi-projective variety over $\overline K$.  A point on the variety determines a rational map $\phi_{a,b}$ and a point of formal period $N$ for that map.

\begin{prop}\label{irred2}
Let $\phi(x,y)   = \left[x^2+axy : bxy + y^2\right]$.  The polynomial
\[
\Phi_{N,\phi}^*(x,y,a,b)\in \overline\Q[x,y,a,b]
\]
 is irreducible for all $N>1$.
\end{prop}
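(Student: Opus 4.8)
The plan is to strip the parameters down to one, reducing the statement to the geometric irreducibility of the dynatomic curve of a single one-parameter family of \emph{quadratic polynomials}, where the result of Morton quoted just after Theorem~\ref{bousch} applies. Write $\nu=\nu_2(N)$. Since $\Phi_{N,\phi}^*$ is homogeneous of degree $\nu$ in $(x,y)$ and, by Lemma~\ref{leadx}, its coefficient of $x^\nu$ is $C_N(b)\neq0$, the variable $y$ does not divide $\Phi_{N,\phi}^*$; so it suffices to prove that $P(z,a,b):=\Phi_{N,\phi}^*(z,1,a,b)\in\overline\Q[z,a,b]$ is irreducible. Viewing $P$ as a polynomial in $z$ over $\overline\Q[a,b]$, Lemma~\ref{leadx} identifies its coefficient of $z^\nu$ as $C_N(b)$ and its coefficient of $z^0$ as $C_N(a)$; these are coprime in $\overline\Q[a,b]$, so $P$ is primitive.

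Next I would specialize $b\mapsto0$. For $N>1$ one has $C_N(0)=1\neq0$, so $C_N(b)$ does not lie in the prime ideal $(b)\subset\overline\Q[a,b]$ and the reduction $P\bmod(b)$ keeps $z$-degree $\nu$. By the standard reduction-modulo-a-prime irreducibility criterion, together with the primitivity of $P$, it is enough to show that $\overline P(z,a):=P(z,a,0)=\Phi_{N,f}^*(z,a)$, with $f(z,a)=z^2+az$, is irreducible over $\overline\Q(a)$ --- equivalently, that the $N\tha$ dynatomic curve of the one-parameter quadratic-polynomial family $f(z,a)=z^2+az$ is geometrically irreducible. (The line $b=0$ lies entirely in $\A^2\smallsetminus\{ab=1\}$, and $z^2+az$ has degree $2$ for every $a$, so this really is a slice of the surface $\{\Phi_{N,\phi}^*=0\}$ in question.)

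For this I would invoke Morton's generalization of Theorem~\ref{bousch} from~\cite{algcurve}: a polynomial family $f(z,c)\in\Z[z,c]$ of degree $\ge2$ whose primitive $N$-bifurcation points are distinct --- and which satisfies the remaining hypotheses of that paper --- has geometrically irreducible $N\tha$ dynatomic curve. The family $z^2+az$ is conjugate over $\overline\Q(a)$ to $f_c(z)=z^2+c$ by the translation $h(z)=z+a/2$, with $c=c(a):=\tfrac a2-\tfrac{a^2}4$; the map $a\mapsto c(a)$ is a degree-$2$ cover of the affine line whose only branch value is $c=1/4$. Since conjugate families bifurcate at corresponding parameter values, the primitive $N$-bifurcation points of $z^2+az$ are precisely the fibres under $a\mapsto c(a)$ over those of $z^2+c$. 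The latter are distinct (Douady--Hubbard), and for $N>1$ none of them equals $1/4$: at $c=1/4$ only the two \emph{fixed} points of $f_{1/4}$ collide, which is a period-$1$ rather than a primitive $N$-bifurcation. Hence each primitive $N$-bifurcation point of $z^2+c$ has two distinct preimages, distinct such points have disjoint fibres, and so the primitive $N$-bifurcation points of $z^2+az$ are distinct. The other hypotheses of~\cite{algcurve} are of a dynamical nature and transfer from $z^2+c$ via the conjugacy $h$ (or can be checked directly). Morton's theorem then yields the irreducibility of $\overline P$, and reversing the two reductions proves the proposition for all $N>1$.

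The two reductions are formal, though they do rely essentially on the content computation in Lemma~\ref{leadx}. I expect the real obstacle to be the final step --- confirming that the specialized family $z^2+az$ meets the hypotheses of~\cite{algcurve} --- and, within it, the distinctness of the primitive $N$-bifurcation points; the branch value $c=1/4$ is precisely what makes this delicate and what accounts for the hypothesis $N>1$ (for $N=1$, $\overline P=z(z+a-1)$ is already reducible).
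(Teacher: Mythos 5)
Your overall strategy---specialize $b\mapsto 0$, noting $C_N(0)=1$ for $N>1$ keeps the lead coefficient nonzero, and reduce to the irreducibility of $\Phi_{N,f}^*(z,a)$ for $f(z,a)=z^2+az$---is exactly the paper's proof. The paper formulates the specialization slightly more directly (suppose $\Phi_N^*=AB$ with both factors of positive $x$-degree, observe the lead $x$-coefficients of $A$ and $B$ multiply to $C_N(b)$, specialize $b=0$ to get a nontrivial factorization of $\Phi_N^*(x,y,a,0)$) rather than routing through primitivity and a reduction-mod-prime lemma, but these are the same mechanism.

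The one substantive difference is the final step. You invoke Morton's \emph{general} theorem from~\cite{algcurve} and then try to verify its hypotheses for $z^2+az$ by conjugating to $z^2+c$ and analyzing the branch locus of $a\mapsto a/2-a^2/4$. The paper avoids all of this: Morton's Corollary~4 in~\cite{algcurve} asserts \emph{precisely} that $\Phi_{N,h}^*(z,a)\in\overline\Q[z,a]$ is irreducible for $h(z,a)=z^2+az$ and $N>1$, so no hypothesis-checking is needed. Your conjugation argument is not wrong in spirit, but it contains assertions you flag yourself as delicate and do not fully justify---notably that $c=1/4$ is not a primitive $N$-bifurcation point for any $N>1$, which is what keeps each bifurcation value off the branch locus and hence keeps the preimages simple. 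That claim is true, but establishing it rigorously from the definitions (say, via the structure of $\Phi_N^*$ at a rationally indifferent fixed point) is essentially re-proving a piece of what Morton already packages up. You correctly identified this as the ``real obstacle''; the lesson is that the literature already contains the exact corollary you need, and citing it is both cleaner and safer than re-deriving it.
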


The proof of Proposition~\ref{irred2} requires a result of Morton.
\begin{lemma}[Corollary~4 in~\cite{algcurve}] \label{morcor}
For $N>1$, the polynomial $\Phi_{N,h}^*(z,a)\in \overline \Q[z,a]$ corresponding to $h(z,a) = z^2+az$ is irreducible.
\end{lemma}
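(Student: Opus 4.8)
Since this statement is Corollary~4 of~\cite{algcurve}, in the paper it is simply quoted; the approach I would take to prove it is to reduce to Bousch's theorem (Theorem~\ref{bousch}) by an affine change of coordinates. Setting $\rho(z)=z+\tfrac a2$, one checks that $h(\,\cdot\,,a)=\rho^{-1}\circ f_{c(a)}\circ\rho$, where $f_c(z)=z^2+c$ and $c(a)=\tfrac a2-\tfrac{a^2}{4}$. Under any affine conjugacy $z\mapsto z+b$ one has $\Phi^*_{N,\psi}(z)=\Phi^*_{N,\phi}(z+b)$, both sides being monic in $z$ of the same degree $\nu_2(N)$, so $\Phi^*_{N,h}(z,a)=\Phi^*_{N,f_c}\!\bigl(z+\tfrac a2,\,c(a)\bigr)$ as an identity in $\overline\Q[z,a]$. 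Write $X=\{\Phi^*_{N,h}(z,a)=0\}$ and $Y_1(N)=\{\Phi^*_{N,f_c}(w,c)=0\}$ for the corresponding plane curves. Then the substitution $w=z+\tfrac a2$ identifies the coordinate ring of $X$ with $A[a]/(a^2-2a+4c)$, where $A$ is the coordinate ring of $Y_1(N)$ and the relation $a^2-2a+4c$ records $c=c(a)$; in other words $X\to Y_1(N)$ is the degree-$2$ cover obtained by adjoining $\sqrt{1-4c}$ to the function field $K:=\overline\Q\bigl(Y_1(N)\bigr)$.

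By Theorem~\ref{bousch}, $Y_1(N)$ is geometrically irreducible, so $X$ is irreducible exactly when this double cover is connected, that is, when $1-4c$ is \emph{not} a square in $K$ (it is a nonzero element of $K$, since $c$ is non-constant on $Y_1(N)$). To rule this out it suffices to exhibit a single place of $K$ at which $1-4c$ has odd valuation, and the natural candidate is a point of $Y_1(N)$ lying over $c=\tfrac14$. Such a point exists because $z^2+\tfrac14$ has $\nu_2(N)\ge1$ points of formal period~$N$; the function $1-4c$ has a simple zero at $c=\tfrac14$ on the $c$-line; and --- the key input --- $c=\tfrac14$ is \emph{not} an $N$-bifurcation point for $N>1$, since the only coincidence among the orbits of $z^2+\tfrac14$ is the collision of its two \emph{fixed} points, a period-$1$ phenomenon. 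Hence $\Phi^*_{N,f_c}(z,\tfrac14)$ has simple roots, $Y_1(N)\to\A^1_c$ is unramified over $\tfrac14$, and so $1-4c$ still has valuation~$1$ at our chosen point. Therefore $1-4c$ is not a square in $K$, the cover $X\to Y_1(N)$ is connected, and $\Phi^*_{N,h}(z,a)$ is irreducible.

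The main obstacle is precisely this ramification claim --- that $\tfrac14$ is not an $N$-bifurcation value for $N>1$ --- which rests on the structure of the quadratic family (Douady--Hubbard). For that reason the most economical honest proof is to invoke Morton's general irreducibility theorem from~\cite{algcurve} directly for the one-parameter family $z^2+az$: its hypotheses (chiefly distinctness of the $N$-bifurcation points, now in the $a$-line) reduce to the quadratic case through the $2$-to-$1$ parameter map $a\mapsto c(a)$, whose unique branch point $c=\tfrac14$ is not an $N$-bifurcation value for $N>1$ --- and this is exactly Corollary~4 of~\cite{algcurve}.
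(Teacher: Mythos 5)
The paper does not prove this lemma at all---it is quoted verbatim from Morton's paper~\cite{algcurve}, Corollary~4, exactly as stated. So there is no in-paper proof to compare against; your proposal is an independent derivation, and it is worth assessing on its own merits.

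Your argument is correct and, in my view, a clean way to reduce the result to Bousch's theorem. The essential moves all check out: the affine conjugation $z\mapsto z+\tfrac a2$ carries $z^2+az$ to $z^2+c$ with $c=c(a)=\tfrac a2-\tfrac{a^2}4$; since the conjugation is affine with derivative $1$ one indeed has $\Phi^*_{N,h}(z,a)=\Phi^*_{N,f_c}\bigl(z+\tfrac a2,c(a)\bigr)$; and $(a-1)^2=1-4c$, so $X=\{\Phi^*_{N,h}=0\}\to Y_1(N)$ is the double cover obtained by adjoining $\sqrt{1-4c}$. By Bousch (Theorem~\ref{bousch}) $Y_1(N)$ is irreducible, so $X$ is irreducible precisely when $1-4c$ is not a square in $K=\overline\Q\bigl(Y_1(N)\bigr)$, and exhibiting a place of odd valuation over $c=\tfrac14$ is a perfectly good way to see this.

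The one step that deserves a sharper justification than you gave is the claim that $c=\tfrac14$ is not an $N$-bifurcation value for $N>1$, i.e.\ that $\Phi^*_N(w,\tfrac14)$ is separable. Saying ``the only coincidence is among the fixed points'' is the right intuition but not quite a proof. What one actually needs, via Proposition~\ref{fromdynunit} in characteristic $0$, is that for $N>1$ the multiple roots of $\Phi^*_N(w,\tfrac14)$ can come only from a parabolic cycle: either a period-$N$ cycle with multiplier $1$, or a period-$m$ cycle ($m\mid N$, $m<N$) whose multiplier is a primitive $(N/m)$\tha\ root of unity. The fixed point $z=\tfrac12$ has multiplier $1$, so it contributes to $\Phi^*_1$ but not to $\Phi^*_N$ for $N>1$. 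For cycles of period $\ge2$, one appeals not to Douady--Hubbard but to Fatou's theorem: a polynomial of degree $d$ has at most $d-1$ non-repelling cycles, because every attracting or parabolic cycle must attract a critical orbit. Since the single finite critical point $0$ of $z^2+\tfrac14$ is already absorbed by the parabolic fixed point at $\tfrac12$, every cycle of period $\ge2$ is repelling, so its multiplier is not a root of unity and each such point is a simple root of $\Phi^*_N(w,\tfrac14)$. This closes the gap, and it also shows your final paragraph slightly undersells your own argument: the needed input is Fatou's inequality, not the finer Mandelbrot-set structure. Your instinct that the most economical path in a paper is simply to cite Morton is of course right---and indeed the reduction you describe is close in spirit to how Morton organizes his general irreducibility criterion---but the proof you sketched is sound.
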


\begin{proof}[Proof of Proposition~\ref{irred2}]
Suppose  for contradiction that 
$$
\Phi_N^*(x,y,a,b) = A(x,y,a,b)B(x,y,a,b) \quad \text{with} \quad \deg_x(A), \deg_x(B) \geq 1.
$$
  By Lemma~\ref{leadx}, the  coefficient of $x^{\nu_2(N)}$ in $\Phi_{N}^*(x,y,a,b)$ is $C_N(b)$.  So the coefficients of the lead $x$ terms in $A$ and $B$ are $c_{N,A}(b)$ and  $c_{N,B}(b)$, where  
  $$c_{N,A}(b) c_{N,B}(b)= C_N(b).$$

Now we specialize to $b=0$.  For $N>1$,  $C_N(0) =1$, so  $c_{N,A}(0)c_{N,B}(0)\neq 0$.
Hence, 
\begin{equation}\label{conteq}
\Phi_N^*(x,y,a,0) = A(x,y,a,0) B(x,y,a,0),
\end{equation}
where neither $A$ nor $B$ is trivial.  

But the specialization to $b=0$ yields the map $\phi_{a,0}(x,y) = [x^2+axy:y^2]$, which when dehomogenized is exactly the polynomial $h(z,a)$ from Corollary~4 in~\cite{algcurve}, stated above.  Hence $\Phi_N^*(x,y,a,0)$ is irreducible,  contradicting equation~\eqref{conteq}.  
\end{proof}

The condition $n > 1$ is necessary, as 
$$\Phi_1^*(x,y,a,b) = x y (x(1 - b ) + y(1 -a ) ).
$$
  This is expected, because part of constructing Milnor's normal form involves moving two of the fixed points to $0$ and $\infty$.  The factor of $xy$ in $\Phi_1^*$ reflects these two fixed points for every value of $a$ and $b$, and the third factor provides the third fixed point.

\begin{theorem} \label{irred}
$M_2(N)$ is geometrically irreducible for every $N>1$.
\end{theorem}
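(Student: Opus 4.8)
The plan is to deduce the geometric irreducibility of $M_2(N)$ from Proposition~\ref{irred2} (irreducibility of the dynatomic polynomial of the two‑parameter normal form $\phi_{a,b}$) together with the fact that the maps $\phi_{a,b}$ parametrize all of $M_2$ except the single point $(3,3)$. First I would set
\[
V(N) \;=\; \bigl\{\Phi_{N,\phi_{a,b}}^*(x,y,a,b) = 0\bigr\} \;\subseteq\; \PP^1 \times \bigl(\A^2 \smallsetminus \{ab = 1\}\bigr).
\]
Because $\Phi_{N,\phi_{a,b}}^*$ is homogeneous in $(x,y)$ and, by Proposition~\ref{irred2}, irreducible over $\overline{\Q}$, the variety $V(N)$ is a geometrically irreducible surface. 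Next I would build the tautological morphism $\rho\colon V(N) \to M_2(N)$ sending $\bigl([x:y],(a,b)\bigr)$ to the conjugacy class of the pair $(\phi_{a,b}, [x:y])$: this is legitimate because $\phi_{a,b}(x,y) = [x^2+axy : bxy+y^2]$ lies in $\Rat_2$ whenever $ab\neq 1$ and $[x:y]$ is by construction a point of formal period $N$ for it, so we obtain a point of $\Rat_2(N)$ and then compose with $\Rat_2(N)\to M_2(N)$.

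The key geometric input is the description of the image of $\rho$. By Milnor's classification of degree‑$2$ rational maps over $\overline{\Q}$ — the same classification invoked in the proof of Lemma~\ref{normform} — every conjugacy class in $M_2$ other than that of $z + 1/z$ contains a map of the form $\phi_{a,b}$. Writing $p_N\colon M_2(N)\to M_2$ for the forgetful map and recalling that $z+1/z$ corresponds to the point $(3,3)\in M_2$, this gives
\[
\rho\bigl(V(N)\bigr) \;\supseteq\; M_2(N) \smallsetminus p_N^{-1}\bigl((3,3)\bigr).
\]

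To finish, I would argue that the right‑hand side is dense. The map $p_N$ is finite (the projection $\Rat_2(N)\to\Rat_2$ is proper with finite fibres, hence finite, and this descends to the geometric quotients) and surjective, since $\deg\Phi_{N,\phi}^* = \nu_2(N)\ge 2$ for every $\phi\in\Rat_2$; moreover $M_2(N)$ is equidimensional of dimension $2$ (for instance because $\Rat_2(N)$ is the hypersurface $\{\Phi_N^* = 0\}$ in $\PP^1\times\Rat_2$, so $\dim\Rat_2(N)=5$). Hence the finite set $p_N^{-1}((3,3))$ contains no irreducible component of $M_2(N)$, and $M_2(N)\smallsetminus p_N^{-1}((3,3))$ is dense in $M_2(N)$. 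Therefore
\[
M_2(N) \;=\; \overline{M_2(N)\smallsetminus p_N^{-1}((3,3))} \;\subseteq\; \overline{\rho(V(N))} \;\subseteq\; M_2(N),
\]
so $M_2(N) = \overline{\rho(V(N))}$ is the Zariski closure of the image of the geometrically irreducible variety $V(N)$, and is therefore geometrically irreducible.

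The main obstacle is the bookkeeping in this last step: one has to rule out the possibility that $M_2(N)$ acquires a spurious (necessarily low‑dimensional) component lying over the excluded point $(3,3)$. This is not a statement about dynamics but about the geometry of the construction — it rests on the finiteness and properness of the forgetful map $p_N$ together with the concrete hypersurface description of $\Rat_2(N)$ — yet it is the one place that genuinely requires care, since everything else is a formal consequence of Proposition~\ref{irred2} and Milnor's normal form.
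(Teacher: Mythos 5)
Your proposal is correct and follows essentially the same route as the paper: both use Proposition~\ref{irred2} to get a geometrically irreducible surface $\{\Phi^*_{N,\phi_{a,b}}=0\}$ and the tautological map from it onto $M_2(N)$ minus the finite fibre over $(3,3)\in M_2$. You actually supply more care than the paper's one-line conclusion by explicitly verifying (via finiteness of $p_N$ and equidimensionality of $\Rat_2(N)$ as a hypersurface) that no irreducible component of $M_2(N)$ can hide over the excluded point — a step the paper leaves implicit.
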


\begin{proof}
Consider the map
\begin{equation}\label{cdmdN}
\begin{CD}
\left\{ (x,y) \in \A^2 \colon \Phi_N^*\left(x,y,a,b\right)=0\right\} @>>> M_2(N) \smallsetminus \{\nu_2(N) \text{ points over } (3,3)\}\\
@VVV @VVV\\
\A^2 \smallsetminus \{ab = 1\} @>>> M_2\smallsetminus \{(3,3)\}\\
\end{CD}
\end{equation}
Each map is surjective, with the horizontal maps generically $6$-to$1$ as described previously, and the vertical maps $\nu_2(N)$-to-$1$.

The top map gives a surjection from the geometrically irreducible variety $\Phi_N^*\left(x,y,a,b\right)=0 $ to the variety $M_2(N) \smallsetminus \{\text{a finite set of points}\}$.  Therefore, $M_2(N)$ must also be irreducible.
\end{proof}

Let $C \subseteq M_2$ be an algebraic curve, and let 
 $Y_1(C,N) = p^{-1}_N(C)$.  

\begin{center}
\begin{tabular}{c c c}
$Y_1(C,N)$ & $\subseteq$ & $ M_2(N)$\\ 
$\downarrow$ & & $\downarrow$ \\ 
$C$ & $\subseteq$ & $ M_2$\\
\end{tabular}
\end{center}

\begin{cor}\label{irredcurves}
Consider the space of curves $C$ in $M_2$ of degree~$d$, each curve $C$ corresponding to a family of degree-$2$ rational maps $\phi_C$.
For every $N>1$, only a Zariski-closed subset of such curves have reducible modular curves $Y_1(C,N)$. 
\end{cor}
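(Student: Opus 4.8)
The plan is to realize every modular curve $Y_1(C,N)$ in the family as the preimage of a hyperplane under a single morphism out of $M_2(N)$, and then to invoke the Bertini irreducibility theorem, using that $M_2(N)$ is geometrically irreducible of dimension two by Theorem~\ref{irred} (the forgetful map $p_N\colon M_2(N)\to M_2$ is generically $\nu_2(N)$-to-one onto $M_2\cong\A^2$).

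First I would set up the parameter space. Identify $M_2$ with $\A^2$ via the multiplier coordinates $(\sigma_1,\sigma_2)$ of~\eqref{m2coords}, and put $M=\binom{d+2}{2}-1$. A degree-$d$ curve $C=C_F=\{F(\sigma_1,\sigma_2)=0\}\subseteq M_2$ is recorded by the point $[F]\in\PP^M$ whose coordinates are the coefficients of $F$; the condition that $C_F$ have degree exactly $d$ cuts out a dense Zariski-open subset $V\subseteq\PP^M$. Under the $d$-uple Veronese embedding $v_d\colon\A^2\hookrightarrow\PP^M$ one has $C_F=v_d^{-1}(H_F)$, where $H_F\subseteq\PP^M$ is the hyperplane whose linear form has the coefficients of $F$. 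Composing with the forgetful surjection $p_N\colon M_2(N)\to M_2$ produces a morphism
\[
\psi \;=\; v_d\circ p_N\colon\ M_2(N)\longrightarrow\PP^M,
\]
and by construction $Y_1(C_F,N)=p_N^{-1}(C_F)=\psi^{-1}(H_F)$ for every $[F]$. Thus each modular curve in the family is the $\psi$-preimage of a hyperplane in $\PP^M$.

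Now I would apply Bertini. The source $M_2(N)$ is geometrically irreducible of dimension two; since $p_N$ is surjective onto $M_2\cong\A^2$ and $v_d$ is injective, the closure of $\psi\bigl(M_2(N)\bigr)$ is two-dimensional. Because $K$ is a number field the entire picture lives over $\overline\Q$, so we are in characteristic zero, and the Bertini irreducibility theorem applies to the morphism $\psi$: there is a dense Zariski-open $U\subseteq(\PP^M)^\vee$ such that $\psi^{-1}(H)$ is irreducible for every $H\in U$. Pulling $U$ back through $[F]\mapsto H_F$ and intersecting with $V$, we obtain a dense Zariski-open set of degree-$d$ curves $C$ for which $Y_1(C,N)$ is irreducible; equivalently, the curves $C$ with $Y_1(C,N)$ reducible are confined to a proper Zariski-closed subset of the space of degree-$d$ curves, which is the assertion.

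The step I expect to be the main obstacle is the application of Bertini to $\psi$. The classical irreducibility theorem is normally stated for a subvariety of projective space, whereas $\psi$ is only a morphism, and a very non-injective one: it factors through the generically $\nu_2(N)$-to-one cover $p_N$, so one cannot argue naively with hyperplane sections of an embedded surface. In characteristic zero the morphism form of Bertini irreducibility is nevertheless valid under the sole hypothesis $\dim\overline{\psi(M_2(N))}\geq 2$; it reduces to the embedded case via the Stein factorization of $\psi$ together with the surjectivity half of the Zariski--Lefschetz theorem on fundamental groups of general hyperplane sections (which forces the monodromy along a general section to remain transitive), or one can quote Jouanolou's treatment of Bertini-type theorems directly. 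It is precisely here that the number-field hypothesis of this section is being used; over a field of positive characteristic one would first re-embed by a sufficiently positive Veronese. Finally, with a little extra bookkeeping --- constructibility of the locus of reducible fibers together with a specialization argument --- one can upgrade ``contained in a proper Zariski-closed subset'' to ``equal to a Zariski-closed subset'', though this refinement is not needed for the applications.
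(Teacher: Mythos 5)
Your argument takes the same route as the paper: reduce degree-$d$ curves to hyperplane sections via the Veronese embedding and invoke Bertini together with the geometric irreducibility of the surface $M_2(N)$ from Theorem~\ref{irred}. The paper phrases this as intersecting $M_2(N)$ with the hypersurface $\PP^1\times p_N^{-1}(C)$ and cites Bertini rather casually; you instead package everything into the single morphism $\psi=v_d\circ p_N$ and correctly isolate the actual content of the step, namely that because $p_N$ is a generically $\nu_2(N)$-to-one cover one needs the \emph{morphism} form of Bertini (Jouanolou, or Stein factorization plus the Zariski--Lefschetz transitivity of monodromy) rather than the embedded-hyperplane-section version, so your write-up is, if anything, the more careful of the two.
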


\begin{proof}
One version of Bertini's Theorem (see, for example,~\cite{egav}) states that if $X$ is geometrically irreducible and 
$\dim X \geq 2$, then the generic hyperplane section has the same property.
Using the Veronese embedding, we can generalize the statement above to degree-$d$ hypersurfaces.  In other words, the intersection of the irreducible variety $X$ with a generic hypersurface is irreducible.

A modular curve $Y_1(C,N)$ corresponds to the intersection of the  irreducible quasi-projective variety $M_2(N)$ and the hypersurface $\PP^1 \times p_N^{-1}(C) $. 
\end{proof}

\section{Maps with automorphisms}\label{autmapsprelim}

We continue using the notation from Sections~\ref{makemd} and~\ref{qpvir}.  In addition, we will use the following:

\begin{tabular}{l l }
$\mathcal O_{\phi}(\alpha) = \{ \phi^N(\alpha) : N\geq 0 \}$ & the forward orbit of a point $\alpha$ under $\phi$.\\
$\Fix(\phi) = \{ \alpha \in \PP^1 : \phi(\alpha) = \alpha \}$ &the set of fixed points of a map $\phi$. \\
\end{tabular}

 As before, take $\phi(x,y)\colon \PP^1 \to \PP^1$.
 Also, choose some $h \in \Aut(\phi)$ of prime order~$p$.
  Fix some $Q \in \PP^1$.  Since $h$ has finite order, $\mathcal O_h(Q) = \{Q_0, \ldots, Q_{p-1}\}$ is a finite set.  As a convention, we write 
$$
Q=Q_0\overset{h}{\mapsto} Q_1 \overset{h}{\mapsto} \cdots \overset{h}{\mapsto} Q_{p-1} \overset{h}{\mapsto} Q_0=Q.
$$
Unless $Q \in \Fix(h)$, the $Q_i$ are distinct, since $h$ has prime order.
If for some $i$  we have $\infty = [1:0] = Q_i$, choose $f\in \PGL_2$ so that $f$ interchanges $Q_i$ and a point not on the orbit of $Q$.   Replacing $\phi$ by $\phi^f$ and $h$ by $h^f$, we may assume that none of the $Q_i$ is infinity. 

\begin{lemma}\label{2fp}\label{fplem}
Let $\phi\colon \PP^1 \to \PP^1$ and $h \in \Aut(\phi)$ of prime order~$p$.
\begin{enumerate}[\textup(a\textup)]
\item
If the characteristic of $K$ is different from $p$, then $\Fix(h)$ consists of exactly two distinct points.

\item
If $Q\in \Fix(h)$ then $\left| \mathcal O_{\phi}(Q)\right| \leq 2$.
\end{enumerate}
\end{lemma}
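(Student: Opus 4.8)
The plan is to reduce everything to the two fixed points of $h$, which we may normalize using the linear conjugacy discussed just before the statement. For part (a): since $\gcd(p,\ch K)=1$, the element $h \in \PGL_2$ of order $p$ is diagonalizable over $\overline K$, because its eigenvalues are $p\tha$ roots of unity, which are distinct since $t^p-1$ is separable; a non-identity diagonalizable element of $\PGL_2$ has exactly two fixed points on $\PP^1$, namely the two eigendirections, and these are distinct precisely because the eigenvalues are distinct (if they coincided, $h$ would be scalar, hence trivial in $\PGL_2$). Concretely, after conjugating we may take $h(z) = \zeta_p z$, whose fixed points are $0$ and $\infty$. This is the whole content of (a).

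For part (b), normalize so that $Q \in \Fix(h)$ is one of these two points; say $h(z)=\zeta_p z$ and $Q=0$ (the case $Q=\infty$ is symmetric, and in any event we may apply the reduction from the paragraph before the lemma to move $\infty$ off any relevant orbit if needed). The key observation is that $h$ commutes with $\phi$, so $h$ maps $\mathcal O_\phi(Q)$ to itself: $h(\phi^n(Q)) = \phi^n(h(Q)) = \phi^n(Q)$ for all $n\ge 0$, since $h(Q)=Q$. Thus the entire forward orbit $\mathcal O_\phi(Q)$ lies in $\Fix(h)$. But by part (a), $\Fix(h)$ has exactly two elements, so $\left|\mathcal O_\phi(Q)\right| \le 2$.

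There is essentially no obstacle here — the argument is a short commutation computation together with the diagonalizability fact. The only point requiring a small amount of care is making sure the normalization in part (b) is legitimate: one should note that conjugating $\phi$ by $f\in\PGL_2$ replaces $h$ by $f^{-1}hf$, which still has order $p$ and still lies in $\Aut(\phi^f)$, and sends $\Fix(h)$ and $\mathcal O_\phi(Q)$ to their images under $f^{-1}$, so $\left|\mathcal O_{\phi^f}(f^{-1}Q)\right| = \left|\mathcal O_\phi(Q)\right|$; hence it suffices to treat the normalized case $h(z)=\zeta_p z$. I would also remark explicitly that in part (b) one does not even need $Q$ to have finite orbit a priori — the bound $\left|\mathcal O_\phi(Q)\right|\le 2$ is forced by containment in the two-point set $\Fix(h)$, which incidentally reproves finiteness of the orbit in this case.
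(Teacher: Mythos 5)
Your proof is correct and takes essentially the same approach as the paper: part (b) is word-for-word the same commutation argument, and part (a) is the same classification of $\PGL_2$ elements phrased via diagonalizability (the paper excludes the unipotent/translation case directly; you note that order $p$ coprime to $\ch K$ forces semisimplicity). The only small wrinkle is that your first sentence compresses the diagonalizability argument slightly out of order — what one really uses is that the minimal polynomial of a lift divides the separable polynomial $t^p-1$, hence the lift is diagonalizable, and \emph{then} distinct eigenvalues follow from nontriviality — but you do supply the distinctness argument in the parenthetical, so the substance is all there.
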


\begin{proof}
  Any nontrivial element of $\PGL_2$ has exactly two fixed points, counted with multiplicity.  The only elements with exactly one fixed point are equivalent under a change of coordinates to a nontrivial translation (where the fixed point is the point at infinity).  If $\ch K \neq p$, then none of these has order $p$, so $h$ has exactly two fixed points.

If $h(Q) = Q$ then for any $k\geq 0$, $h\phi^k(Q) =\phi^kh(Q) =\phi^k(Q)$ because $h\in\Aut(\phi)$.  In other words, every point in the orbit of $Q$ is fixed by $h$, so there can be at most two distinct points on the orbit.
\end{proof}

\begin{define}
Let $\phi\colon \PP^1 \to \PP^1$ and $h \in \Aut(\phi)$.  A point $Q \in \PP^1$ has \emph{$h$-period $N$ for $\phi$}, if $\phi^N(Q) = h^j(Q)$ for some $j>0$ and $p \nmid j$.  A point
$Q \in \PP^1$ has  \emph{primitive $h$-period $N$ for $\phi$}, if $N>0$ is the smallest such integer.
\end{define}

If $Q$ has $h$-period $N$, then $\phi^{pN}(Q) = h^{jp}(Q) = Q$ since $h$ has order $p$.  In other words, $Q$ is a point of order $pN$ for $\phi$, but one with the additional special property that $h^j(Q)$ is on the orbit $\mathcal O_{\phi}(Q)$.
In analogy with  the dynatomic polynomials $\Phi_N^*$,  we wish to create polynomials that have as roots  points of primitive $h$-period $N$ for $\phi$.   

Let $\phi(x,y) \colon \PP^1 \to \PP^1$ be a rational map.  Suppose that $h \in \Aut(\phi)$ and that $\Fix(h) = \{P_i\}$ with $P_i=[x_i:y_i]$.  By abuse of notation, we  will write $\phi^N(x,y) - h^j(x,y)$ to represent the polynomial whose roots are exactly the points $Q \in \PP^1$ such that $\phi^N(Q) = h^j(Q)$.  For example, if we have 
$$
\phi(x,y) = [F_1(x,y) : G_1(x,y)]  \quad \text{and} \quad h(x,y) = [ax+by:cx+dy],
$$
 then by 
$\phi(x,y) - h(x,y)$ we mean the polynomial
$$(cx+dy)F_1(x,y) - (ax+by)G_1(x,y).$$

\begin{define}\label{psidef}
\begin{align}
\Psi_{pN,\phi,h}(x,y) 
&= \prod_{j=1}^{p-1} \left(\phi^N(x,y) - h^j(x,y)\right)\\
\Psi_{pN,\phi,h}^*(x,y) 
&= \prod_{\substack{k\mid N \\
 pk\nmid N}}\left(\Psi_{pk,\phi,h}(x,y)\right)^{\mu(N/k)}\\
\widetilde{\Psi_{pN,\phi,h}^*}(x,y)&=\frac{\Psi_{pN,\phi,h}^*(x,y) }{\prod_{P_i\in \Fix(h)}(y_ix-x_iy)^{\delta_i}}\\
\text{where } \delta_i & =
\ord_{(y_ix-x_iy)}\Psi_{pN,\phi,h}^*(x,y).\nonumber
 \end{align}
We call $\widetilde{\Psi_{pN,\phi,h}^*}(x,y)$ the \emph{$h$-tuned dynatomic polynomial for $\phi$}.  \textup(The justification for the term ``polynomial'' in this definition comes from Proposition~\ref{polyprop}.\textup)
\end{define}

When the rational map $\phi$ and the automorphism $h$ are fixed, we may suppress dependence on $\phi$ and $h$ in our notation, writing simply $\Psi_{pN}$, $\Psi_{pN}^*$, and $\widetilde{\Psi_{pN}^*}$.  
 By construction, the roots of $\Psi_{pN}$ are the points $Q=[x:y]$ such that $\phi^N(Q)=h^j(Q)$ for some $1\leq j \leq p-1$.  With $\Psi_{pN}^*$, we are eliminating (usually) the points $Q$ such that $\phi^k(Q) = h^j(Q)$ for some $k<N$.     The need to eliminate the fixed points of $h$ as well will become clear as we proceed. 

\begin{example}\label{psiegs}
Let $\phi(x,y) = [x^2 - 2xy:-2xy+y^2]$.  We see that  $f(x,y) = [y:x]$ is  an automorphism of order~$2$   and $g(x,y)=[x-y:x]$ is an automorphism of order~$3$ for $\phi$ .  We first compute a few of the dynatomic polynomials $\Phi^*_{N,\phi}(x,y)$.  (The following were computed with Mathematica.)

\begin{align*}
\Phi_2^*(x,y) 
& =-x^2+y x-y^2\displaybreak[0]\\
\Phi_3^*(x,y) 
& = 3 \left(x^3-3 y x^2+y^3\right) \left(x^3-3 y^2
   x+y^3\right)\displaybreak[0]\\
\Phi_4^*(x,y) 
& = \left(-5 x^4+10 y x^3-5 y^3 x+y^4\right)\\
& \qquad \cdot  \left(x^4+y x^3-9   y^2 x^2+y^3 x+y^4\right) \left(-x^4+5 y x^3-10 y^3 x+5
   y^4\right)\displaybreak[0]\\
\Phi_6^*(x,y) 
& = \left(7 x^6-21 y x^5+35 y^3 x^3-21 y^4 x^2+y^6\right)\\
& \quad \cdot\left(x^6-21 y^2 x^4+35 y^3 x^3-21 y^5 x+7  y^6\right)\\
   & \quad \cdot  \left(x^6-6 y x^5-6 y^2 x^4+29 y^3 x^3-6 y^4 x^2-6 y^5x+y^6\right) \\
    &\quad \cdot \left(x^{18}-18 y x^{17}-54 y^2 x^{16}+1167
   y^3 x^{15}-2466 y^4 x^{14} -7344 y^5 x^{13} \right. \\
   & \qquad +31065 y^6
   x^{12}-20619 y^7 x^{11}-54513 y^8 x^{10}+99326 y^9x^9\\
   & \quad\qquad  -34119 y^{10} x^8-47844 y^{11} x^7+51072 y^{12}
   x^6-16155 y^{13} x^5\\
   & \qquad \qquad  \left. -621 y^{14} x^4+1329 y^{15} x^3-207
   y^{16} x^2+y^{18}\right)\\
   & \qquad \cdot  \left(x^{18}-207 y^2
   x^{16}+1329 y^3 x^{15}-621 y^4 x^{14}-16155 y^5
   x^{13}\right. \\
   &\qquad \quad +51072 y^6 x^{12}-47844 y^7 x^{11} -34119 y^8
   x^{10}+99326 y^9 x^9\\
   & \qquad \qquad -54513 y^{10} x^8-20619 y^{11}
   x^7+31065 y^{12} x^6-7344 y^{13} x^5\\
   & \qquad \qquad \quad \left. -2466 y^{14}
   x^4+1167 y^{15} x^3-54 y^{16} x^2 -18 y^{17}
   x+y^{18}\right)
\end{align*}
   
Note that 
$$
\Fix(f) = \{\pm 1\} \quad \text{and}
 \quad \Fix(g) = \{ \text{roots of } z^2-z+1 \}.
 $$
 In other words, points in $\Fix(g)$ are the primitive sixth roots of unity.

Since $f$ has order~$2$, we compute the first few $f$-tuned dynatomic polynomials $\widetilde{\Psi^*_{2N,\phi,f}}$.
\begin{align*}
\Psi_{2,\phi,f} (x,y) &= \Psi_{2,\phi,f}^*(x,y)\\
&= x \left(x^2-2 x y\right)-y \left(y^2-2 x y\right) = (x-y) \left(x^2-y x+y^2\right)\\
\widetilde{\Psi_{2,\phi,f}^*}(x,y) &= x^2-y x+y^2\displaybreak[0]\\
\displaybreak[0]\\
\Psi_{4,\phi,f} (x,y) &= \Psi_{4,\phi,f}^*(x,y) \\
&= x\left(x^2-2 x y\right)^2-2 \left(x^2-2 x y\right)
   \left(y^2-2 x y\right) - y\left(y^2-2 x y\right)^2\\
   & \qquad \qquad -2 \left(x^2-2 x y\right)
   \left(y^2-2 x y\right)\\
   &= (x-y) \left(x^4+y x^3-9 y^2 x^2+y^3 x+y^4\right)\\
\widetilde{\Psi_{4,\phi,f}^*}(x,y) &=   x^4+y x^3-9 y^2 x^2+y^3 x+y^4\displaybreak[0]\\
\displaybreak[0]\\
\Psi_{6,\phi,f} (x,y) &= (x-y) \left(x^2-y x+y^2\right) \\
& \quad \cdot \left(x^6-6 y x^5-6 y^2 x^4+29 y^3 x^3-6 y^4 x^2-6 y^5 x+y^6\right)\\
\Psi_{6,\phi,f}^*(x,y)&=\frac{\Psi_{6,\phi,f} (x,y) }{\Psi_{2,\phi,f} (x,y)} = x^6-6 y x^5-6 y^2
   x^4+29 y^3 x^3-6 y^4 x^2-6 y^5 x+y^6\\
\widetilde{\Psi_{6,\phi,f}^*}(x,y) &= \Psi_{6,\phi,f}^*(x,y) = x^6-6 y x^5-6 y^2
   x^4+29 y^3 x^3-6 y^4 x^2-6 y^5 x+y^6
\displaybreak[0]\\
\displaybreak[0]\\
\intertext{Since $g$ has order~$3$, we compute the first few $g$-tuned dynatomic polynomials $\widetilde{\Psi^*_{3N,\phi,g}}$.}
\Psi_{3,\phi,g} (x,y) 
&=  \left(x \left(x^2-2 x y\right)-(x-y) \left(y^2-2 x y\right)\right)\\
& \qquad \cdot \left((y-x) \left(x^2-2 x y\right)-y \left(y^2-2 x y\right) \right)\\
&= -\left(x^3-3 y x^2+y^3\right) \left(x^3-3 y^2 x+y^3\right)\\
 \widetilde{\Psi_{3,\phi,g}^*}(x,y) 
 &= \Psi_{3,\phi,g}^*(x,y) = \Psi_{3,\phi,g}(x,y) \\
 &= -\left(x^3-3 y x^2+y^3\right) \left(x^3-3 y^2 x+y^3\right) \displaybreak[0]\\
 \displaybreak[0]\\
 \Psi_{6,\phi,g}(x,y) & =-\left(x^2-y x+y^2\right)^2 \left(x^3-3 y x^2+y^3\right)
   \left(x^3-3 y^2 x+y^3\right)  \\
  \Psi_{6,\phi,g}^*(x,y) &= \frac{ \Psi_{6,\phi,g}(x,y)}{\Psi_{3,\phi,g} (x,y) } = \left(x^2-y x+y^2\right)^2 \\
\widetilde{\Psi_{6,\phi,g}^*}(x,y) &=     1 
\end{align*}

\end{example}

\medskip

Based on this example, one might conjecture that $\widetilde{\Psi_{pN}^*}$ is always a polynomial, and that it divides $\Phi_{pN}^*$.  In Section~\ref{red}, we prove these assertions.   After determining that $\widetilde{\Psi_{pN}^*}$ is almost always nontrivial, we will be able to conclude that the dynatomic polynomials $\Phi_{pN}^*$ are reducible for infinitely many $N$, and hence so are the moduli spaces $M_d(pN,\mathfrak C_p)$.

\section{Basic properties of $h$-tuned dynatomic polynomials}

Throughout, we fix a rational map $\phi\colon \PP^1 \to \PP^1$ of degree $d \geq 2$, and a map $h \in \Aut(\phi)$ of prime order~$p$.

\begin{lemma}\label{msmall}
Suppose $Q$ has $h$-period $N$ for $\phi$. 
\begin{enumerate}[\textup(a\textup)]
\item\label{eqorb}
Then 
$ \phi^{kN}(Q) \in \mathcal O_h(Q)$ for all $k$.

\item
If $Q$ has primitive $h$-period $m$, then  $m \mid N$.
\end{enumerate}
\end{lemma}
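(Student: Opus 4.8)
The plan is to prove part~(a) by a one-line induction exploiting that $h$ commutes with $\phi$, and then to deduce part~(b) from part~(a) by a division-with-remainder argument — the dynamical analogue of the fact that the primitive period of a periodic point divides every period.

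For part~(a): since $h\in\Aut(\phi)$ we have $\phi\circ h=h\circ\phi$, hence $\phi^{N}\circ h^{j}=h^{j}\circ\phi^{N}$ for all $N,j$. Writing the hypothesis as $\phi^{N}(Q)=h^{j}(Q)$ with $p\nmid j$, I induct on $k$: trivially $\phi^{0}(Q)=Q=h^{0}(Q)$, and $\phi^{(k+1)N}(Q)=\phi^{N}\!\bigl(\phi^{kN}(Q)\bigr)=\phi^{N}\!\bigl(h^{kj}(Q)\bigr)=h^{kj}\!\bigl(\phi^{N}(Q)\bigr)=h^{(k+1)j}(Q)$. Thus $\phi^{kN}(Q)=h^{kj}(Q)\in\mathcal O_h(Q)$ for all $k\ge 0$.

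For part~(b) I would first dispose of the degenerate case $Q\in\Fix(h)$: there $h^{j}(Q)=Q$ for every $j$, so having $h$-period $N$ means exactly $\phi^{N}(Q)=Q$, $\mathcal O_{\phi}(Q)$ has at most two points by Lemma~\ref{2fp}, and both assertions reduce to the familiar statements about periods of periodic points. So assume $Q\notin\Fix(h)$, so that $h^{a}(Q)=Q$ holds iff $p\mid a$. Let $m$ be the primitive $h$-period of $Q$, say $\phi^{m}(Q)=h^{i}(Q)$ with $p\nmid i$, and write $N=qm+r$ with $0\le r<m$. Exactly as in part~(a), $\phi^{qm}(Q)=h^{qi}(Q)$, so
\[ h^{j}(Q)=\phi^{N}(Q)=\phi^{r}\bigl(\phi^{qm}(Q)\bigr)=\phi^{r}\bigl(h^{qi}(Q)\bigr)=h^{qi}\bigl(\phi^{r}(Q)\bigr), \]
whence $\phi^{r}(Q)=h^{\,j-qi}(Q)$. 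It remains to rule out $0<r<m$. If $p\nmid(j-qi)$, then (after adjusting the exponent mod $p$) this exhibits $r$ as an $h$-period of $Q$, contradicting the minimality of $m$. If $p\mid(j-qi)$, then $h^{\,j-qi}=\mathrm{id}$ and $\phi^{r}(Q)=Q$; writing $m=q'r+r'$ with $0\le r'<r$ and using $\phi^{r}(Q)=Q$ we get $h^{i}(Q)=\phi^{m}(Q)=\phi^{r'}(Q)$ with $p\nmid i$, so $r'$ is again an $h$-period of $Q$ unless $r'=0$; and $r'=0$ would give $Q=h^{i}(Q)$ with $p\nmid i$, i.e.\ $Q\in\Fix(h)$, which we excluded. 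In every case we contradict the minimality of $m$, so $r=0$ and $m\mid N$.

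The argument is essentially routine; the one place needing care is the nested case analysis in part~(b), where after one division the identity $\phi^{r}(Q)=h^{\,j-qi}(Q)$ may collapse to a genuine $\phi$-periodicity relation (when $p\mid j-qi$) rather than an $h$-period relation, forcing a second division-with-remainder step and an appeal to $Q\notin\Fix(h)$. An alternative that packages both parts uniformly is to observe that $T=\{n\ge 0:\phi^{n}(Q)\in\mathcal O_h(Q)\}$ is a submonoid of $\Z_{\ge 0}$ closed under subtraction, hence $T=d_0\Z_{\ge 0}$ with $d_0=\min(T\smallsetminus\{0\})$, and then to identify $d_0$ with the primitive $h$-period via the same fixed-point discussion; I would most likely present the elementary version above.
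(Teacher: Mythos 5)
Your proof is correct and follows the same route as the paper's: part~(a) via commutativity of $h$ with $\phi$, part~(b) via division with remainder. Your version is more careful in one respect: the paper derives $\phi^r(Q)=h^{1-k}(Q)\in\mathcal O_h(Q)$ and immediately concludes ``by minimality of $m$, $r=0$,'' but this elides the possibility $1-k\equiv 0\pmod p$, in which case $\phi^r(Q)=Q$ and $r$ is \emph{not} an $h$-period in the sense of the paper's definition (which requires an exponent $j'$ with $p\nmid j'$). Your nested case analysis --- a second division with remainder when $p\mid(j-qi)$, followed by the appeal to $Q\notin\Fix(h)$ --- closes exactly this gap. Your upfront reduction to the case $Q\notin\Fix(h)$ also matches the paper's implicit assumption (its proof begins by supposing $\phi^N(Q)=h^j(Q)\neq Q$), and your alternative ``subtraction-closed submonoid'' packaging is a clean reformulation, though one would still need to check that its generator $d_0$ satisfies $\phi^{d_0}(Q)\neq Q$ when $Q\notin\Fix(h)$ in order to identify $d_0$ with the primitive $h$-period.
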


\begin{proof}
Suppose  $\phi^N(Q) = h^j(Q) \neq Q$ for some $1 \leq j \leq p-1$.  As sets, $\mathcal O_h(Q) = \mathcal O_{h^j}(Q)$, so we may rename the automorphism so that $\phi^N(Q) = h(Q)$.  We now show, in fact, that $\phi^{kN}(Q)=h^k(Q)$ for all $k$.  Since $h \in \Aut(\phi)$, we know that $\phi^N h = h\phi^N$ for all $N$.  Then,
\begin{align*}
\phi^{kN}(Q) &= \underbrace{\phi^N\circ \phi^N\circ \cdots \circ\phi^N}_{k\text{ copies}}(Q)\\
&= h^k(Q) \qquad \text{using the facts that $\phi^N h = h\phi^N$ and $\phi^N(Q) = h(Q)$.}  \displaybreak[0] \\
\end{align*}
For the second result, we have
\begin{align*}
h(Q) &= \phi^{N}(Q) = \phi^{qm+r}(Q) && \text{with }0\leq r<m\\
&= \phi^{r}\phi^{qm}(Q) = \phi^{r}h^k(Q) && \text{for some $k$, by the result above}\\
&= h^k \phi^{r}(Q) && \text{since } h \in \Aut(\phi).\\
\end{align*}
So $\phi^r(Q) = h^{1-k}(Q) \in \mathcal O_h(Q)$.
By minimality of $m$, $r=0$ and so $m \mid N$.
\end{proof}

If $Q\neq\infty$ and $\phi(Q) \neq \infty$, then we may expand $\phi$ around $Q$: 
$$
\phi(z) =  \sum_{i=0}^n \lambda_i(\phi,Q)(z-Q)^i + {\rm O}\left((z-Q)^{n+1}\right),
$$
where  ${\rm O}\left((z-Q)^{n+1}\right)$ represents a function vanishing to order at least $n+1$ at $z=Q$.  We take this as the definition of the $ \lambda_i(\phi,Q)$.

\begin{remark}
Note that $\lambda_0(\phi,Q) = \phi(Q)$.  Furthermore, if $Q$ is periodic with period~$N$, then $\lambda_1(\phi^N,Q)$ is the multiplier of the cycle  as defined on page~\pageref{multdef}.  Also,  expanding both $\phi^N(z)$ and $h^j(z)$ around $Q$ as described, we see that the following two conditions are equivalent:
\begin{enumerate}
\item
$\lambda_i\left(\phi^N,Q\right) = \lambda_i(h^j,Q)$ for $0\leq i \leq n$, and
\item
$\ord_{z=Q}\left(\Psi_{pN,\phi,h}\right)\geq n+1$.
\end{enumerate}
The $\lambda_i$ can therefore be used to compute the order of vanishing of the $h$-tuned dynatomic polynomials at a point $Q \in \PP^1$, which is the key to proving they are indeed polynomials.

We recall the following definitions from complex dynamical systems.  The cycle containing $Q$ is
\begin{itemize}
\item
\emph{attracting} if $\left| \lambda_1(\phi^N,Q) \right| <1$,
\item
\emph{repelling} if $\left| \lambda_1(\phi^N,Q) \right| >1$, and
\item
\emph{indifferent} (or neutral) if $\left| \lambda_1(\phi^N,Q) \right| =1$.
\end{itemize}
In the last case, if $ \lambda_1(\phi^N,Q) $ is a root of unity then the cycle is said to be \emph{rationally indifferent}.
\end{remark}

\begin{lemma}\label{equalcoeffs}
Let $f,g,h$ be rational  maps.  Then:
\begin{enumerate}[\textup(a\textup)]
\item
If $\lambda_i(f,h(Q)) = \lambda_i(g,h(Q))$ for all $0 \leq i \leq n$, then also
$\lambda_i(fh,Q) = \lambda_i(gh,Q)$ for all $0 \leq i \leq n$.
\item
If $\lambda_i(f,Q) = \lambda_i(g,Q)$ for all $0 \leq i \leq n$ \textup(in particular, we require $f(Q)=g(Q)$\textup), then also
$\lambda_i(hf,Q) = \lambda_i(hg,Q)$ for all $0 \leq i \leq n$.
\end{enumerate}
\end{lemma}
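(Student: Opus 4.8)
The plan is to observe that both parts are instances of the elementary fact that the order-$n$ Taylor expansion (the ``$n$-jet'') of a composite of two rational maps at a point is determined by the $n$-jets of the two factors at the relevant points, and to prove this by substituting one power-series expansion into the other. Note first that the statement only makes sense when every point at which we expand is finite: $\lambda_i(\psi,P)$ is defined only for $P\neq\infty$ and $\psi(P)\neq\infty$, so the hypothesis and conclusion already force $Q$, $h(Q)$ (in (a)), $f(Q)=g(Q)$ (in (b)), and the relevant images to avoid $\infty$; I work under this assumption. Since the $\lambda_i$ are \emph{defined} as coefficients of a power-series expansion rather than via derivatives, the whole argument will be valid in arbitrary characteristic (in characteristic $0$ it is just the chain rule / Fa\`a di Bruno).

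For part (a), the hypothesis at $i=0$ gives $f(h(Q))=g(h(Q))$. Expand $h$ at $Q$ and put $w(z)=h(z)-h(Q)$, a power series in $(z-Q)$ with no constant term, so $w(z)=\mathrm{O}(z-Q)$. Substituting $u-h(Q)=w(z)$ into the order-$n$ expansion of $f$ about $h(Q)$ (a legitimate substitution of formal power series, since $w$ has zero constant term) yields
\[
f(h(z))=\sum_{i=0}^{n}\lambda_i\big(f,h(Q)\big)\,w(z)^{i}+\mathrm{O}\!\left((z-Q)^{n+1}\right),
\]
where the error term stays of order $\ge n+1$ in $(z-Q)$ precisely because $w(z)=\mathrm{O}(z-Q)$. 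For each $i$, the coefficients of $(z-Q)^0,\dots,(z-Q)^n$ in $w(z)^i$ depend only on $\lambda_1(h,Q),\dots,\lambda_n(h,Q)$; hence the $n$-jet of $f\circ h$ at $Q$ is a fixed expression in $\lambda_0(f,h(Q)),\dots,\lambda_n(f,h(Q))$ and $\lambda_1(h,Q),\dots,\lambda_n(h,Q)$. The same expression computes the $n$-jet of $g\circ h$, and by hypothesis $f$ and $g$ supply identical data, so $\lambda_i(fh,Q)=\lambda_i(gh,Q)$ for $0\le i\le n$.

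Part (b) is the mirror image. Here $P:=f(Q)=g(Q)$ is the $i=0$ case of the hypothesis; I expand $h$ about $P$ and put $v(z)=f(z)-f(Q)=\mathrm{O}(z-Q)$. Substituting $u-P=v(z)$ into the order-$n$ expansion of $h$ about $P$ gives
\[
h(f(z))=\sum_{k=0}^{n}\lambda_k(h,P)\,v(z)^{k}+\mathrm{O}\!\left((z-Q)^{n+1}\right),
\]
and the coefficients of $(z-Q)^0,\dots,(z-Q)^n$ in each $v(z)^k$ depend only on $\lambda_1(f,Q),\dots,\lambda_n(f,Q)$; since $f$ and $g$ agree in these and have the common value $P$ at $Q$, we get $\lambda_i(hf,Q)=\lambda_i(hg,Q)$ for $0\le i\le n$. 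I do not anticipate a genuine obstacle: the whole proof is a formal computation with truncated power series. The only point needing care is the bookkeeping that the substituted error term remains of order $\ge n+1$ in $(z-Q)$, which is exactly where one uses that the inner function ($w$ in (a), $v$ in (b)) vanishes at $Q$; if one wants to be fully careful one also checks that truncating $w$ (resp.\ $v$) at order $n$ does not affect the degree-$\le n$ coefficients of its powers, which again follows from the vanishing at $Q$.
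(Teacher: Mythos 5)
Your proof is correct and follows essentially the same route as the paper's: both are the chain rule for the $n$-jet of a composite, obtained by substituting one truncated power-series expansion into the other. The only difference is presentational — the paper writes out the explicit Fa\`a di Bruno–type formula expressing $\lambda_m(fh,Q)$ as a polynomial in the $\lambda_k(f,h(Q))$ and $\lambda_{i_j}(h,Q)$ and then substitutes the hypothesis term by term, whereas you argue abstractly that the degree-$\le n$ coefficients of the composite are a fixed function of the two $n$-jets (and you are somewhat more explicit about the needed finiteness assumptions and the order of the error term).
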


\begin{remark}
Suppose we have $\phi, h \in K(z)$ and let $f\in \PGL_2$ be some change of coordinates.  Then Lemma~\ref{equalcoeffs} says that
\begin{align*}
\lambda_i(\phi,Q) &= \lambda_i(h,Q) \text{ for all } 0\leq i\leq n \\
&\Longrightarrow
\lambda_i(\phi f,f^{-1}Q) = \lambda_i(h f, f^{-1}Q) \text{ for all } 0\leq i\leq n\\
&\Longrightarrow
\lambda_i(f^{-1}\phi f , f^{-1}Q) = \lambda_i(f^{-1}h f , f^{-1}Q) \text{ for all } 0\leq i\leq n .
\end{align*}
So if $\phi(Q) = h(Q)$,  then equality of the first $n$ coefficients, $\lambda_i(\phi,Q)$ and $\lambda_i(h,Q)$, is preserved under $\PGL_2$ conjugation.  This is how the Lemma will be applied.
\end{remark}

\begin{proof}
The proof is essentially the chain rule.  The $n=0$ case is clear.   For $1\leq m\leq n$,
\begin{align*}
\lambda_m(fh, P) &=
\sum_{k=1}^m  \lambda_k(f,h(Q) )
\prod_{\substack{k\text{-tuples } (i_1, \ldots, i_k)\\ i_1+\cdots+i_k=m}} \lambda_{i_j}(h,Q)\displaybreak[0]\\
&=
\sum_{k=1}^m  \lambda_k(g,h(Q) )
\prod_{\substack{k\text{-tuples } (i_1, \ldots, i_k)\\ i_1+\cdots+i_k=m}} \lambda_{i_j}(h,Q)
&&\text{by the hypothesis}\displaybreak[0]\\
&=
\lambda_m(gh, Q) \displaybreak[2]\\
\lambda_m(hf, Q) &=
\sum_{k=1}^m  \lambda_k(h,f(Q) )
\prod_{\substack{k\text{-tuples } (i_1, \ldots, i_k)\\ i_1+\cdots+i_k=m}} \lambda_{i_j}(f,Q)\displaybreak[0]\\
&=
\sum_{k=1}^m  \lambda_k(h,g(Q) )
\prod_{\substack{k\text{-tuples } (i_1, \ldots, i_k)\\ i_1+\cdots+i_k=m}} \lambda_{i_j}(g,Q)
&&\text{by the hypothesis}\displaybreak[0]\\
&=
\lambda_m(hg, Q). &&\qedhere
\end{align*}

\end{proof}

\begin{lemma}\label{derivrel1}
 Let $Q_i = h^i(Q)$ as before. For any $Q\in \overline K$, \begin{equation}
 \prod_{i=0}^{p-1}\lambda_1(h,Q_i) = 1. \label{hprod1}
 \end{equation}
Furthermore, if $Q \in \Fix(h)$ and $K$ has characteristic different from $p$, then $\lambda_1(h,Q)$ is a primitive $p\tha$ root of unity. 
\end{lemma}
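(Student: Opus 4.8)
The plan is to deduce both statements from the chain rule applied to the relation $h^p=\id$, together with the elementary classification of elements of $\PGL_2$ having a fixed point of multiplier~$1$. Before computing anything I would make a harmless reduction: since $\mathcal O_h(Q)=\{Q_0,\dots,Q_{p-1}\}$ is a finite subset of $\PP^1$, I can pick $f\in\PGL_2$ with $f(\infty)\notin\mathcal O_h(Q)$ and replace $h$ by $f^{-1}hf$ and $Q$ by $f^{-1}(Q)$. This changes neither the product $\prod_{i=0}^{p-1}\lambda_1(h,Q_i)$ — which is the multiplier of a cycle, respectively of a fixed point, of $h^p$, and is therefore conjugation-invariant — nor the number $\lambda_1(h,Q)$ in the case $Q\in\Fix(h)$. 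After this normalization no $Q_i$ is the point at infinity, so $\lambda_1(h,Q_i)=h'(Q_i)$ in the ordinary sense.

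To prove \eqref{hprod1}, I would use that $h(Q_i)=Q_{i+1}$ with indices read modulo $p$, so the chain rule gives $(h^p)'(Q)=\prod_{i=0}^{p-1}h'(Q_i)=\prod_{i=0}^{p-1}\lambda_1(h,Q_i)$. But $h$ has order $p$ in $\PGL_2$, so $h^p$ is the identity \emph{map} on $\PP^1$ and hence $(h^p)'(Q)=1$, which is exactly \eqref{hprod1}. This part is immediate.

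For the second assertion, suppose $Q\in\Fix(h)$, so that $Q_i=Q$ for every $i$ and \eqref{hprod1} reads $\lambda_1(h,Q)^p=1$; thus $\lambda_1(h,Q)$ is a $p\tha$ root of unity, and since $p$ is prime it only remains to exclude $\lambda_1(h,Q)=1$. If $\lambda_1(h,Q)=1$, then $h$ is a \emph{nontrivial} element of $\PGL_2$ (it has order $p\ge 2$) possessing a fixed point of multiplier~$1$. Conjugating any Möbius transformation with two distinct fixed points to the form $z\mapsto\mu z$ exhibits its two fixed-point multipliers as $\mu$ and $\mu^{-1}$, so a nontrivial map with a multiplier-$1$ fixed point must in fact have a single fixed point and hence be conjugate to a nontrivial translation $z\mapsto z+c$. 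Such a translation has order $\ch K$, and infinite order when $\ch K=0$, contradicting the hypotheses that $h$ has order $p$ and that $\ch K\ne p$. Therefore $\lambda_1(h,Q)\ne 1$, so it is a primitive $p\tha$ root of unity. (One could instead invoke Lemma~\ref{2fp}\,(a): for $\ch K\ne p$ the map $h$ has two distinct fixed points, which is incompatible with a multiplier-$1$ fixed point on a nontrivial Möbius map.)

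The bulk of the argument is routine, so if anything is the main obstacle it is the two points of care I flagged: ensuring in the reduction step that the naive derivative genuinely computes $\lambda_1(h,Q_i)$ and that conjugation leaves the relevant quantities unchanged, and, in the second part, recognizing that one must separate off the degenerate multiplier-$1$ case and that the hypothesis $\ch K\ne p$ is exactly what rules it out.
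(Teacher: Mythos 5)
Your argument is correct and follows essentially the same route as the paper: the identity $\prod\lambda_1(h,Q_i)=1$ is the chain rule applied to $h^p=\id$, and for $Q\in\Fix(h)$ you rule out $\lambda_1(h,Q)=1$ by noting that a nontrivial Möbius transformation with a multiplier-$1$ fixed point must be conjugate to a translation, which cannot have order $p$ when $\ch K\ne p$. This is exactly the content of Lemma~\ref{2fp}(a), which the paper cites directly at this point; you reprove it inline (and note the shortcut of citing it), and you take a bit of extra care about the change of coordinates moving $\infty$ off the orbit, but the mathematics is the same.
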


\begin{proof}
The first result follows from the fact that $h^p(z) = z$.  For all $Q \in \overline K$, this gives $\left(h^p\right)'(Q) = 1$, so the numbers $\lambda_1(h,Q)=h'(Q)$ are well-defined (and nonzero) for all $Q \in \overline K$.  So then
\begin{eqnarray*}
 \prod_{i=0}^{p-1}\lambda_1(h,Q_i) &=&  \prod_{i=0}^{p-1} h'(Q_i)\\
&=&(h^p)'(Q) = 1.
\end{eqnarray*}

If $Q \in \Fix(h)$, equation~\eqref{hprod1} becomes
$\left( \lambda_1(h,Q)\right)^p =1$.  If $\lambda_1(h, Q) = 1$, then $Q$ is a double root of $h(z) - z = 0$.  The automorphism $h$ has exactly two fixed points counted with multiplicity, and from Lemma~\ref{2fp} we know that they are distinct since $\ch K \neq p$.  So $\lambda_1(h, Q)$ must be a primitive $p\tha $ root of unity, since $p$ is prime.
\end{proof}

\begin{lemma}\label{derivrel2}
Suppose that $\phi(Q) = h(Q)$.  Then
 \begin{align}
\frac{\lambda_1(\phi,Q_i)}{\lambda_1(h,Q_i)} &= \frac{\lambda_1(\phi,Q_j)}{\lambda_1(h,Q_j)}
\quad \text{for all $i, j \geq 0$.}\label{lambdaseq} \\
\prod_{i=0}^{p-1}\lambda_1(\phi, Q_i) &=\left( \frac{\lambda_1(\phi,Q)}{\lambda_1(h, Q)}\right)^p. \label{phiprod}
 \end{align}

\end{lemma}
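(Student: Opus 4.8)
The plan is to exploit the identity $\phi\circ h = h\circ\phi$, which holds because $h\in\Aut(\phi)$, together with the hypothesis $\phi(Q)=h(Q)$. First I would note that $\phi$ and $h$ agree on the whole orbit $\mathcal O_h(Q)$: applying $\phi h^i = h^i\phi$ to $Q$ gives $\phi(Q_i)=\phi(h^i(Q))=h^i(\phi(Q))=h^{i+1}(Q)=Q_{i+1}=h(Q_i)$ for every $i$, indices read modulo $p$; in particular $\phi^p(Q)=h^p(Q)=Q$. By the running convention fixed just before Lemma~\ref{2fp} none of the $Q_i$ equals $\infty$, and $\phi(Q_i)=h(Q_i)=Q_{i+1}$ is also finite, so every $\lambda_1$ appearing below is a genuine derivative of a rational function at a finite point.

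Next I would differentiate the functional equation $\phi(h(z))=h(\phi(z))$ by the chain rule and evaluate at $z=Q_i$, using $h(Q_i)=\phi(Q_i)=Q_{i+1}$:
\[
\lambda_1(\phi,Q_{i+1})\,\lambda_1(h,Q_i)=\lambda_1(h,Q_{i+1})\,\lambda_1(\phi,Q_i).
\]
Since $h\in\PGL_2$, its derivative $\lambda_1(h,Q_i)=h'(Q_i)$ is nonzero, so I may divide to get
\[
\frac{\lambda_1(\phi,Q_{i+1})}{\lambda_1(h,Q_{i+1})}=\frac{\lambda_1(\phi,Q_i)}{\lambda_1(h,Q_i)}
\]
for all $i$, whence equation~\eqref{lambdaseq} follows by induction on $|i-j|$ (noting that $Q_i$ depends only on $i \bmod p$). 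One could instead derive the same relation from Lemma~\ref{equalcoeffs}, but the direct chain-rule computation is the shortest route.

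Finally, writing $r$ for the common value of the ratio $\lambda_1(\phi,Q_i)/\lambda_1(h,Q_i)$, so that $\lambda_1(\phi,Q_i)=r\,\lambda_1(h,Q_i)$ for every $i$, I would multiply over $i=0,\dots,p-1$ and invoke equation~\eqref{hprod1} of Lemma~\ref{derivrel1}, which gives $\prod_{i=0}^{p-1}\lambda_1(h,Q_i)=1$:
\[
\prod_{i=0}^{p-1}\lambda_1(\phi,Q_i)=r^{p}\prod_{i=0}^{p-1}\lambda_1(h,Q_i)=r^{p}=\left(\frac{\lambda_1(\phi,Q)}{\lambda_1(h,Q)}\right)^{p},
\]
which is equation~\eqref{phiprod}. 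There is no genuinely hard step; the only points needing a moment's care are the legitimacy of working with affine derivatives, handled by the convention that no $Q_i$ is $\infty$, and the degenerate case $Q\in\Fix(h)$, where the orbit collapses to a single point so that \eqref{lambdaseq} is vacuous and \eqref{phiprod} reduces to $\lambda_1(h,Q)^{p}=1$, itself part of Lemma~\ref{derivrel1}.
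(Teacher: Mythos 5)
Your proof is correct and follows essentially the same route as the paper's: differentiate $\phi h = h\phi$ via the chain rule at each $Q_i$ to obtain $\lambda_1(\phi,Q_{i+1})\lambda_1(h,Q_i)=\lambda_1(h,Q_{i+1})\lambda_1(\phi,Q_i)$, divide by the nonvanishing $h$-derivatives to get constancy of the ratio, then multiply over the orbit and invoke $\prod_i\lambda_1(h,Q_i)=1$ from Lemma~\ref{derivrel1}. Your preliminary observation that $\phi(Q_i)=h(Q_i)=Q_{i+1}$ on the whole orbit, and the check of the degenerate $Q\in\Fix(h)$ case, are nice explicit touches that the paper leaves implicit.
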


\begin{proof}
As described in Section~\ref{autmapsprelim}, we assume without loss of generality that infinity is not in $\mathcal O_h(Q)$.  The required change of coordinates is permitted by Lemma~\ref{equalcoeffs}.

Since $\phi h = h\phi$,
\begin{eqnarray}
\lambda_1(\phi h, Q_i) &=&  \lambda_1(h \phi, Q_i), \nonumber\\
\lambda_1(\phi, Q_{i+1}) \lambda_1(h, Q_i) &=& \lambda_1(h, Q_{i+1}) \lambda_1(\phi, Q_i). \label{lambda1} 
\end{eqnarray}
Dividing each side of equation~\eqref{lambda1} by the product $ \lambda_1(h, Q_i)  \lambda_1(h, Q_{i+1}) $ --- which is nonzero by Lemma~\ref{derivrel1} --- along with an induction gives the first result. 

To prove the second result, divide each side of equation~\eqref{lambda1} by $\lambda_1(h, Q_{i})$ to get
\begin{eqnarray}
\lambda_1(\phi, Q_{i+1})&=&\lambda_1(\phi, Q_{i})\left(\frac{\lambda_1(h, Q_{i+1})}{\lambda_1(h, Q_{i})}\right). \label{qi+1}
\end{eqnarray}
Repeatedly using the substitution in equation~\eqref{qi+1}, we have 
\[
\lambda_1(\phi , Q_i) = \lambda_1(\phi, Q_{0})\left(\frac{\lambda_1(h, Q_{i})}{\lambda_1(h, Q_0)}\right).
\]
  Now, applying the identity  in equation~\eqref{hprod1}, we have
\begin{align*}
\prod_{i=0}^{p-1}\lambda_1(\phi, Q_i) &=  \prod_{i=0}^{p-1}\lambda_1(\phi, Q_0)\left(\frac{\lambda_1(h, Q_{i})}{\lambda_1(h, Q_{0})}\right)\\
&=\left( \frac{\lambda_1(\phi,Q )}{\lambda_1(h, Q)}\right)^p. &&\qedhere
\end{align*}
\end{proof}

\begin{lemma}\label{creqns}
Assume $\phi(Q) = h(Q)$.  If  $\lambda_j(\phi,Q) = \lambda_j(h,Q)$ for all $1 \leq j \leq e$, then
\begin{enumerate}[\textup(a\textup)]
\item
 $\lambda_j(\phi,Q_{i}) = \lambda_j(h,Q_{i})$ for all $Q_i\in \mathcal O_h(Q)$ and all $1\leq j \leq e$, and
 \item
 $ \lambda_j(\phi^i,Q) =  \lambda_j(h^i,Q)$ for all $i$  and all $1 \leq j \leq e$.  In particular, we have
\begin{itemize}
\item
$\lambda_1(\phi^p,Q)= 1$, and 
\item
$ \lambda_j(\phi^p,Q) = 0$ for all $2 \leq j \leq e$.
\end{itemize}
\end{enumerate}
\end{lemma}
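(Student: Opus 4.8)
The plan is to prove both parts by short inductions that rely only on the commutation relation $\phi h = h\phi$ and on the two chain-rule identities of Lemma~\ref{equalcoeffs}. As arranged in Section~\ref{autmapsprelim}, I may assume throughout that $\infty \notin \mathcal O_h(Q)$, so that every coefficient $\lambda_j(\,\cdot\,,Q_i)$ appearing below is defined by Taylor expansion; by the remark following Lemma~\ref{equalcoeffs} the hypotheses are unaffected by this normalization. Two preliminary observations: first, $\lambda_0(\phi,Q) = \phi(Q) = h(Q) = \lambda_0(h,Q)$, so the assumed equality of coefficients in fact holds for $0 \le j \le e$; second, $\phi(Q_i) = \phi(h^i(Q)) = h^i(\phi(Q)) = h^{i+1}(Q) = Q_{i+1} = h(Q_i)$ for every $i$ (cf.\ Lemma~\ref{msmall}), so in particular $\phi^i(Q) = Q_i$ for all $i$ and the hypothesis $\phi(Q_i)=h(Q_i)$ is available at every point of the orbit.

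For part~(a) it is enough to propagate the agreement one step along the $h$-orbit, from $Q$ to $Q_1 = h(Q)$, and then repeat: $\mathcal O_h(Q) = \{Q_0,\dots,Q_{p-1}\}$ is finite and $\phi(Q_i) = h(Q_i)$, so the same argument applies verbatim at $Q_1, Q_2,\dots$ From $\lambda_j(\phi,Q) = \lambda_j(h,Q)$ for $0 \le j \le e$, Lemma~\ref{equalcoeffs}(b) (inner maps $\phi$ and $h$, outer map $h$) gives $\lambda_j(h\phi, Q) = \lambda_j(h^2, Q)$ for $0\le j\le e$; since $\phi h = h\phi$, this reads $\lambda_j(\phi\circ h, Q) = \lambda_j(h\circ h, Q)$ for $0\le j\le e$. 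The two composites now share the inner map $h$, which lies in $\PGL_2$ and carries the finite point $Q$ to the finite point $Q_1$, hence is invertible near $Q$; applying Lemma~\ref{equalcoeffs}(a) with inner map $h^{-1}$ (again a rational map), evaluated at $Q_1$, cancels it and yields $\lambda_j(\phi, Q_1) = \lambda_j(h, Q_1)$ for $0 \le j \le e$. Iterating around the finite orbit proves~(a).

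For part~(b) I induct on $i$, the cases $i = 0$ and $i = 1$ being trivial and the hypothesis, respectively. Suppose $\lambda_j(\phi^i, Q) = \lambda_j(h^i, Q)$ for $0\le j\le e$. Lemma~\ref{equalcoeffs}(b) (inner maps $\phi^i$ and $h^i$, outer map $\phi$) gives $\lambda_j(\phi\circ\phi^i, Q) = \lambda_j(\phi\circ h^i, Q)$, and then Lemma~\ref{equalcoeffs}(a) (outer maps $\phi$ and $h$, inner map $h^i$, which sends $Q$ to $Q_i$), combined with part~(a) applied at $Q_i$, gives $\lambda_j(\phi\circ h^i, Q) = \lambda_j(h\circ h^i, Q) = \lambda_j(h^{i+1},Q)$ --- all for $0\le j\le e$. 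This closes the induction. Taking $i = p$ and using $h^p = \id$, for which $\lambda_1(h^p,Q)=1$ and $\lambda_j(h^p,Q)=0$ for $j\ge2$, we obtain $\lambda_1(\phi^p,Q)=1$ and $\lambda_j(\phi^p,Q)=0$ for $2\le j\le e$.

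No step here is a genuine obstacle; the one point that deserves care is the ``cancellation'' used in part~(a) --- reading Lemma~\ref{equalcoeffs}(a) backwards to strip off an inner copy of $h$ --- which is legitimate precisely because $h\in\PGL_2$ has a rational inverse, so one simply invokes the lemma with $h^{-1}$ as the inner composand at the point $Q_1$. The other thing to keep straight throughout is which map plays the role of the \emph{inner} versus the \emph{outer} composand in each application of Lemma~\ref{equalcoeffs}.
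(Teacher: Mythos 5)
Your proof is correct, and it is cleaner than the paper's in a couple of respects worth noting. For part~(a), the paper re-derives the needed cancellation ``by hand'': it uses equation~\eqref{lambda1} from Lemma~\ref{derivrel2} for the base case $\lambda_1$, and then for the top coefficient $\lambda_e$ it expands $\lambda_e(\phi h,Q_i)=\lambda_e(h\phi,Q_i)$ via the chain rule, cancels the matching middle terms by the induction hypothesis, and isolates $\lambda_e(\phi,Q_{i+1})=\lambda_e(h,Q_{i+1})$ using $\lambda_1(\phi,Q_i)=\lambda_1(h,Q_i)\ne 0$. You instead treat Lemma~\ref{equalcoeffs} as a black box: apply part~(b) of that lemma with outer map $h$ to pass from agreement of $\phi,h$ at $Q$ to agreement of $h\phi,h^2$ at $Q$, rewrite $h\phi=\phi h$, and then ``strip'' the inner $h$ by applying part~(a) of that lemma with the rational map $h^{-1}$ as the inner composand at $Q_1$. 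This is a legitimate and slightly slicker route --- the only thing one must notice, and you do, is that $h^{-1}\in\PGL_2$ is itself a rational map so the lemma applies, and that $\infty$ has been kept off the orbit so all the relevant $\lambda_j$'s are defined. For part~(b) the paper simply observes that $\lambda_j(\phi^i,Q)$ and $\lambda_j(h^i,Q)$ are the \emph{same} polynomial expression in the lower-order coefficients at the orbit points and cites part~(a); your induction on $i$, alternating Lemma~\ref{equalcoeffs}(b) with outer map $\phi$ and Lemma~\ref{equalcoeffs}(a) with inner map $h^i$ (plus part~(a) at $Q_i$), makes that observation fully explicit. The concluding specialization $h^p=\id$ is the same in both. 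In short: same underlying chain-rule content, but you have factored the argument through Lemma~\ref{equalcoeffs} more systematically, at the modest cost of needing the $h^{-1}$ ``inversion'' trick.
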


\begin{proof}
Note first that $\lambda_1(h,Q_i)\neq 0$ by the fact that $\prod_{i=1}^p\lambda_1(h,Q_i) =1$ from Lemma~\ref{derivrel1}.

The first assertion is proved by induction.  
If $\lambda_1(h,Q_i) = \lambda_1(\phi,Q_i)$, neither is~$0$, so we may cancel them both in equation~\eqref{lambda1} to get $\lambda_1(h,Q_{i+1}) = \lambda_1(\phi,Q_{i+1})$.

Now suppose the implication holds for $1\leq j \leq e-1$, and that  $\lambda_j(\phi,Q_i) = \lambda_j(h,Q_i)$ for all $1 \leq j \leq e$.
Because $\phi h = h \phi$, we get
\begin{align*}
\lambda_e(\phi h, Q_i) 
&=
\lambda_e(h \phi , Q_i). 
\end{align*}
This gives
\begin{equation*}
\begin{split}
\lambda_e(\phi, Q_{i+1})\left(\lambda_1(h, Q_i)\right)^e &+ (S_1) + \lambda_1(\phi, Q_{i+1}) \lambda_e(h, Q_i) \\
&=
\lambda_e(h, Q_{i+1})\left(\lambda_1(\phi, Q_i)\right)^e + (S_2) + \lambda_1(h, Q_{i+1}) \lambda_e(\phi, Q_i),
\end{split}
\end{equation*}
where $(S_1)$ represents terms involving $\lambda_j(\phi,Q_{i+1})$ and $\lambda_k(h,Q_i)$ with $1\leq j\leq e-1$ and $k < e$, and similarly for $(S_2)$.  These terms will be equal on each side by the induction hypothesis, so they cancel, as do the final two terms by the fact that $\lambda_e(h , Q_i) = \lambda_e( \phi, Q_i)$.  We are left with
\begin{eqnarray*}
\lambda_e(\phi, Q_{i+1})\left(\lambda_1(h, Q_i)\right)^e 
&=&
\lambda_e(h, Q_{i+1})\left(\lambda_1(\phi, Q_i)\right)^e\\
\lambda_e(\phi, Q_{i+1})
&=&
\lambda_e(h, Q_{i+1}),
\end{eqnarray*}
where the last equality follows from the fact that $\lambda_1(\phi, Q_i)
=\lambda_1(h, Q_i) \neq 0$ again.

For the second assertion, note that  $\lambda_j(\phi^i,Q)$ is some polynomial combination of $\lambda_k(\phi,Q_i)$ for $1\leq k \leq j$ and $Q_i\in \mathcal O_h(Q)$, and   $\lambda_j(h^i,Q)$ is the exact same polynomial combination of $\lambda_k(h,Q_i)$.  By the first assertion, then, these two must be equal for $1 \leq j \leq e$.

The final two statements follow immediately from the above and the fact that $h^p(z) = z$.  From which we know that  $\lambda_1(h^p,z) = 1$ and $\lambda_j(h^p,z) = 0$ for $j\neq1$.  In particular, these hold at $z=Q$.
\end{proof}

\begin{lemma}\label{phij-hj}
Assume $\phi(Q) = h(Q)$, and
let $e$ be the smallest positive integer such that $\lambda_e(\phi,Q) \neq \lambda_e(h,Q)$.  Then
$$\displaystyle \lambda_e(\phi^j, Q) - \lambda_e(h^j,Q)= j\left( \prod_{i=1}^{j-1}\lambda_1(\phi,Q_i)\right) \left(\lambda_e(\phi,Q)-\lambda_e(h,Q)\right).$$
\end{lemma}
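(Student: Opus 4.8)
The plan is to work with local Taylor expansions along the orbit $\mathcal O_h(Q)=\{Q_0,\dots,Q_{p-1}\}$ and to track the first-order discrepancy between $\phi$ and $h$ as it propagates under iteration. After using Lemma~\ref{equalcoeffs} to arrange (as in Section~\ref{autmapsprelim}) that no $Q_i$ equals $\infty$, I would first record the consequences of the minimality of $e$: since $\lambda_\ell(\phi,Q)=\lambda_\ell(h,Q)$ for $0\le\ell\le e-1$, Lemma~\ref{creqns} (applied with ``$e$'' replaced by ``$e-1$'') gives $\lambda_\ell(\phi,Q_i)=\lambda_\ell(h,Q_i)$ for all $i$ and $0\le\ell\le e-1$, and likewise $\lambda_\ell(\phi^m,Q)=\lambda_\ell(h^m,Q)$ for all $m$ and $0\le\ell\le e-1$. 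In particular (assume $e\ge2$) the numbers $\mu_i:=\lambda_1(\phi,Q_i)=\lambda_1(h,Q_i)$ are defined and nonzero, $\lambda_1(\phi^m,Q)=\lambda_1(h^m,Q)=\prod_{l=0}^{m-1}\mu_l$, and, writing $D_i:=\lambda_e(\phi,Q_i)-\lambda_e(h,Q_i)$, one has $\phi(z)-h(z)=D_i(z-Q_i)^e+\mathrm{O}((z-Q_i)^{e+1})$ near $z=Q_i$ and $\phi^m(z)-h^m(z)=\bigl(\lambda_e(\phi^m,Q)-\lambda_e(h^m,Q)\bigr)(z-Q)^e+\mathrm{O}((z-Q)^{e+1})$ near $z=Q$. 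The claim is then precisely a formula for this last coefficient $\lambda_e(\phi^j,Q)-\lambda_e(h^j,Q)$.

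The crucial intermediate step is the recursion $\mu_i^{\,e}\,D_{i+1}=\mu_{i+1}\,D_i$, valid for every $i$. I would derive it by extracting the coefficient of $(z-Q_i)^e$ from both sides of $\phi\circ h=h\circ\phi$ at $z=Q_i$, via the Fa\`a di Bruno formula appearing in the proof of Lemma~\ref{equalcoeffs}. With inner summation index $k$, the terms for $2\le k\le e-1$ have all their entering $\lambda$'s of index $\le e-1$, so they agree on the two sides and cancel; the $k=e$ term equals $\mu_i^{\,e}$ times the $\lambda_e$ of the outer map, and the $k=1$ term equals $\mu_{i+1}$ times the $\lambda_e$ of the inner map. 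What survives is $\mu_i^{\,e}\lambda_e(\phi,Q_{i+1})+\mu_{i+1}\lambda_e(h,Q_i)=\mu_i^{\,e}\lambda_e(h,Q_{i+1})+\mu_{i+1}\lambda_e(\phi,Q_i)$, which rearranges to the recursion. Iterating, $D_{m-1}=D_0\cdot\bigl(\prod_{i=1}^{m-1}\mu_i\bigr)\cdot\bigl(\prod_{i=0}^{m-2}\mu_i^{\,e}\bigr)^{-1}$ (empty products $=1$).

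Finally I would telescope the iterate as $\phi^j-h^j=\sum_{m=1}^{j}\bigl(\phi^{j-m}\circ\phi\circ h^{m-1}-\phi^{j-m}\circ h\circ h^{m-1}\bigr)$. In the $m$-th summand set $w=h^{m-1}(z)$, so that $w-Q_{m-1}=\lambda_1(h^{m-1},Q)(z-Q)+\mathrm{O}((z-Q)^2)$ with $\lambda_1(h^{m-1},Q)=\prod_{l=0}^{m-2}\mu_l$; then $\phi(w)-h(w)=D_{m-1}(w-Q_{m-1})^e+\mathrm{O}((z-Q)^{e+1})$, and upon applying $\phi^{j-m}$ (which carries a neighbourhood of $Q_m$ to one of $Q_j$) only its degree-one term, $\lambda_1(\phi^{j-m},Q_m)=\prod_{l=m}^{j-1}\mu_l$, survives to order $e$, because every degree-$\ge2$ term of $\phi^{j-m}$ carries a factor $\phi(w)-h(w)=\mathrm{O}((z-Q)^e)$ times a further factor $\mathrm{O}(z-Q)$. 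Hence the $(z-Q)^e$-coefficient of the $m$-th summand is $\lambda_1(\phi^{j-m},Q_m)\,D_{m-1}\,\lambda_1(h^{m-1},Q)^e$, and substituting the formula for $D_{m-1}$ this collapses to $D_0\prod_{i=1}^{j-1}\mu_i$, independent of $m$. Summing the $j$ summands and comparing with the $(z-Q)^e$-coefficient of $\phi^j-h^j$ (which is $\lambda_e(\phi^j,Q)-\lambda_e(h^j,Q)$, since the lower coefficients vanish by the first paragraph) yields the stated identity. The same ingredients also support a direct induction on $j$, writing $\phi^{j+1}=\phi^j\circ\phi$, extracting the $(z-Q)^e$-coefficient, and invoking the inductive hypothesis at $Q_1$, which is legitimate because $D_1\neq0$.

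I expect the main obstacle to be the two Fa\`a di Bruno extractions — the recursion for the $D_i$ and the order-$e$ expansion of each telescoped summand — where one must verify carefully that the terms with $2\le k\le e-1$ cancel and that the higher-degree terms of $\phi^{j-m}$ and of $h^{m-1}$ contribute only at order $\ge e+1$. These are routine but bookkeeping-heavy; everything else is formal manipulation of the resulting identities together with Lemmas~\ref{creqns} and~\ref{equalcoeffs}.
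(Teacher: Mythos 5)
Your argument is correct, and its technical heart is the same as the paper's: the recursion $\lambda_1(\phi,Q_i)^e\bigl(\lambda_e(\phi,Q_{i+1})-\lambda_e(h,Q_{i+1})\bigr)=\lambda_1(\phi,Q_{i+1})\bigl(\lambda_e(\phi,Q_{i})-\lambda_e(h,Q_{i})\bigr)$, extracted from $\phi h=h\phi$ via the Fa\`a di Bruno expansion of Lemma~\ref{equalcoeffs} after the $(S_1)$--$(S_2)$ cancellation provided by Lemma~\ref{creqns}. Where you diverge is in how the exponent $j$ enters. The paper runs a single induction on $j$: it composes $\phi^j=\phi\circ\phi^{j-1}$, extracts the coefficient of $(z-Q)^e$, uses the inductive hypothesis for the inner-iterate term, and uses the recursion (iterated down the orbit) to collapse the outer-map term; the factor of $j$ appears as $(j-1)+1$. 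You instead telescope $\phi^j-h^j=\sum_{m=1}^{j}\phi^{j-m}\circ(\phi-h)\circ h^{m-1}$, show via the recursion that each of the $j$ summands contributes the \emph{same} order-$e$ coefficient $\bigl(\prod_{i=1}^{j-1}\lambda_1(\phi,Q_i)\bigr)\bigl(\lambda_e(\phi,Q)-\lambda_e(h,Q)\bigr)$, and read the factor $j$ off as the number of summands. Your version makes the mechanism behind the linear growth in $j$ more transparent, at the price of the bookkeeping you flag (the two Fa\`a di Bruno extractions, and checking that the higher-degree terms of $\phi^{j-m}$ and $h^{m-1}$ land in order $\geq e+1$). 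The paper's induction is shorter once one trusts the chain-rule expansion, but it hides the ``$m$-independence'' that your telescoping exposes. You also rightly note that the same ingredients give the direct induction --- essentially the paper's proof --- so the two are genuinely interchangeable. One small point: you implicitly work with $e\geq 2$ (so that $\mu_i=\lambda_1(\phi,Q_i)=\lambda_1(h,Q_i)$). This is the only case in which the lemma is applied in the paper, and the paper's own proof carries the same tacit assumption, so this is not a gap in context, but it is worth stating explicitly.
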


\begin{proof}
We proceed by induction.  The claim is trivial for $j=1$.  Assume the relation holds for $j-1$.  Note that by Lemma~\ref{creqns}, for all $0\leq f<e$ we have $\lambda_f(\phi^i,Q_j)= \lambda_f(h^i,Q_j)$ for all non-negative integers $i$ and $j$, which gives equality of the terms marked $(S_1)$ and $(S_2)$ in the first equation below.
\begin{align}
\lambda_e(\phi^j,Q) &- \lambda_e(h^j,Q)\nonumber\\
&=
\left( \lambda_e(\phi,Q_{j-1})\lambda_1(\phi^{j-1},Q)^e + (S_1) + \lambda_1(\phi,Q_{j-1})\lambda_e(\phi^{j-1},Q) \right) \nonumber\\
& \qquad - \left( \lambda_e(h,Q_{j-1})\lambda_1(h^{j-1},Q)^e + (S_2) + \lambda_1(h,Q_{j-1})\lambda_e(h^{j-1},Q) \right) \nonumber\\
&=
\lambda_1(\phi^{j-1},Q_1)^e (\lambda_e(\phi,Q_j)-\lambda_e(h,Q_j)) \nonumber\\
& \qquad + \lambda_1(\phi,Q_{j-1}) (\lambda_e(\phi^{j-1},Q)-\lambda_e(h^{j-1},Q))\nonumber\\
&=
\lambda_1(\phi^{j-1},Q)^e (\lambda_e(\phi,Q_{j-1})-\lambda_e(h,Q_{j-1})) \nonumber \\
& \qquad +(j-1)\left(\prod_{i=1}^{j-1} \lambda_1(\phi,Q_i)\right)  \left(\lambda_e(\phi,Q)-\lambda_e(h,Q)\right). \label{jinduct}
\end{align}

It remains to calculate the first term in this sum.  First, note that 
\begin{align*}
\lambda_1(\phi^{j-1},Q) &= \prod_{i=0}^{j-2}\lambda_1(\phi,Q_i)
&&\text{(recall that $Q = Q_0$).}
\end{align*}
We again use the fact that $\phi h = h\phi$.  Equality of the terms $(S_1)$ and $(S_2)$ follows as usual.

\begin{align*}
&\lambda_e(\phi h,Q_i) = \lambda_e(h\phi, Q_i)\\
&\lambda_e(\phi,Q_{i+1})\lambda_1(h,Q_i)^e + (S_1) + \lambda_1(\phi,Q_{i+1})\lambda_e(h,Q_i)\\
&\qquad \qquad = \lambda_e(h,Q_{i+1})\lambda_1(\phi,Q_i)^e + (S_2) + \lambda_1(h,Q_{i+1})\lambda_e(\phi,Q_i)\\
&\lambda_1(\phi,Q_i)^e\left(\lambda_e(\phi,Q_{i+1}) - \lambda_e(h,Q_{i+1})\right)
=
\lambda_1(\phi,Q_{i+1})\left(\lambda_e(\phi,Q_{i}) - \lambda_e(h,Q_{i})\right).
\end{align*}

So inductively again,
\begin{align*}
\lambda_1(\phi^{j-1},Q)^e (\lambda_e(\phi,Q_{j-1})&-\lambda_e(h,Q_{j-1}))\\
&= \left(\prod_{i=0}^{j-2} \lambda_1(\phi,Q_i)^e \right)
\left(\lambda_e(\phi,Q_{j-1}) - \lambda_e(h,Q_{j-1})\right)\\
&= 
\prod_{i=1}^{j-1}\lambda_1(\phi,Q_i)\left(\lambda_e(\phi,Q) - \lambda_e(h,Q)\right).
\end{align*}

Substituting this into equation~\eqref{jinduct} above gives the desired result.
\end{proof}

Proposition~$3.4$ in~\cite{dynunit} characterizes exactly when the order of vanishing of the dynatomic polynomials is positive at some point.  We begin with a definition.

\begin{define}\label{apdef2}
Let 
\begin{align*}
a_Q(N) &= \ord_{z=Q}(\Phi_{N}) = \ord_{z=Q}(\phi^N(z) -z), \text{ and }\\
a_Q^*(N)& = \ord_{z=Q}(\Phi_{N}^*)  = \sum_{k\mid N}\mu(N/k)a_Q(k).\\
\end{align*}
\end{define}
Note that this is the same $a_Q(N)$ given by the intersection multiplicity on page~\pageref{zn}.

\begin{lemma}[Lemma 3.4 in~\cite{dynunit}]
Let $K$ be a field, let $X/K$ be a smooth projective curve, and let $\phi\colon X \to X$ be a non-constant morphism defined over $K$.  Suppose that $Q \in X$ is a fixed point of $\phi$. 
\begin{enumerate}[\textup(a\textup)]
\item
$a_Q(N) \geq a_Q(1)$ for all $N \geq 1$.
\item
$a_Q(N)> a_Q(1)$ if and only if one of the following two conditions is true:
\begin{enumerate}[\textup(i\textup)]
\item
$a_Q(1) = 1$  and $\phi'(Q)^N = 1$.
\item
$a_Q(1) \geq 2$ and $N=0$ in $K$, in which case $a_Q(N) \geq 2a_Q(1) - 1.$
\end{enumerate}
\end{enumerate}
\end{lemma}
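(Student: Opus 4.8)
The plan is to reduce the whole statement to a power-series computation in a local coordinate at $Q$. Choosing a uniformizer $z$ at $Q$ (so that $Q$ becomes the origin) and passing to the completed local ring, the morphism $\phi$ is given near $Q$ by a power series $\phi(z) = \lambda z + (\text{higher order terms})$ with $\lambda = \phi'(Q)$, and $\phi^N(z)$ is the $N$-fold composite. Since $\phi(Q) = Q$ we have $a_Q(1) = \ord_{z=0}(\phi(z) - z)$, so $a_Q(1) = 1$ precisely when $\lambda \neq 1$, while $a_Q(1) = m \geq 2$ precisely when $\lambda = 1$ and $\phi(z) = z + c_m z^m + \cdots$ with $c_m \neq 0$. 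I would treat the cases $\lambda \neq 1$ and $\lambda = 1$ separately, reading off part~(a) in each case as a byproduct of the computation of $a_Q(N)$.

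For $\lambda \neq 1$ (equivalently $a_Q(1) = 1$), a one-line induction gives $\phi^N(z) = \lambda^N z + \mathrm{O}(z^2)$, hence $\phi^N(z) - z = (\lambda^N - 1)z + \mathrm{O}(z^2)$. So $a_Q(N) = 1 = a_Q(1)$ when $\lambda^N \neq 1$, and $a_Q(N) \geq 2 > a_Q(1)$ when $\lambda^N = 1$; this is exactly alternative~(i), and alternative~(ii) cannot occur here since it requires $a_Q(1) \geq 2$.

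The substantive case is $\lambda = 1$, $a_Q(1) = m \geq 2$; write $\phi(z) = z + g(z)$ with $\ord_{z=0} g = m$. The crux is the congruence
\[
\phi^N(z) \equiv z + N\,g(z) \pmod{z^{2m-1}},
\]
proved by induction on $N$. If $\phi^N(z) = z + N g(z) + h(z)$ with $\ord_{z=0} h \geq 2m-1$, then $\phi^{N+1}(z) = \phi^N(z) + g(\phi^N(z))$; since $g(w) - g(z)$ is divisible by $w - z$ with cofactor of $z$-order $\geq m-1$ (as $g$ has order $m$), and $\phi^N(z) - z$ has $z$-order $\geq m$, the difference $g(\phi^N(z)) - g(z)$ has order $\geq 2m-1$, which gives $\phi^{N+1}(z) \equiv z + (N+1) g(z) \pmod{z^{2m-1}}$. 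Granting the congruence, $\phi^N(z) - z \equiv N g(z) \pmod{z^{2m-1}}$. If $N \neq 0$ in $K$ then $N g(z)$ has order exactly $m$, so $a_Q(N) = m = a_Q(1)$; if $N = 0$ in $K$ then $N g(z) = 0$ and $a_Q(N) \geq 2m-1 = 2 a_Q(1) - 1 > a_Q(1)$. This realizes alternative~(ii) and excludes~(i). Together with the first case, this establishes (a) and (b).

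I expect the only real obstacle to be the bookkeeping of orders of vanishing in the parabolic congruence: one must verify that after composing, the error term genuinely remains of order $\geq 2m-1$, and that the expansion of $g(w) - g(z)$ uses only integer binomial coefficients so that the argument is valid in every characteristic --- that is, that in characteristic $p$ nothing happens beyond the vanishing of the scalar $N$. Once the congruence $\phi^N(z) \equiv z + N g(z) \pmod{z^{2m-1}}$ is in hand, the ``generic'' equality $a_Q(N) = a_Q(1)$, the characteristic-zero statement, part~(a), and the lower bound in~(ii) all follow immediately.
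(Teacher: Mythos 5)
Your proof is correct, and I want to flag that the paper itself does not prove this lemma: it is stated as a citation of Lemma~3.4 from Morton--Silverman's \emph{Periodic points, multiplicities, and dynamical units}, so there is no in-paper argument to compare against. Your approach---pass to the completed local ring at $Q$, split into the cases $\lambda\neq 1$ and $\lambda=1$, and in the parabolic case establish the congruence $\phi^N(z)\equiv z+N\,g(z)\pmod{z^{2m-1}}$ where $g(z)=\phi(z)-z$ has order $m=a_Q(1)$---is the standard one and is essentially what Morton and Silverman do. The one place worth being explicit in a write-up is the order estimate you call the ``crux'': writing $g(w)-g(z)=(w-z)\cdot G(w,z)$ with $G$ a power series in two variables, you need that upon substituting $w=\phi^N(z)$ the series $G(\phi^N(z),z)$ has $z$-order at least $m-1$. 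This is correct because every monomial $w^i z^j$ in $G$ has $i+j\geq m-1$ and $w=z\cdot(\text{unit})$ near $0$, and it is integral (no division by integers occurs), so the argument is characteristic-free exactly as you claim. With that in place the dichotomy $N\neq 0$ versus $N=0$ in $K$ gives part~(a), both alternatives of part~(b), and the bound $a_Q(N)\geq 2a_Q(1)-1$. No gaps.
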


We prove the analogous result for the $h$-tuned dynatomic polynomials, beginning again with a definition.
\begin{define}\label{apdef}
For $p$ a prime, let 
\begin{align*}
b_Q(pN) &= \ord_{z=Q}(\Psi_{pN}) = \sum_{j=1}^{p-1}  \ord_{z=Q}\left(\phi^N(z) - h^j(z)\right)\displaybreak[0] \\
b_Q^*(pN)& = \ord_{z=Q}(\Psi_{pN}^*)  = \sum_{\substack{k\mid N\\ pk \nmid N}}\mu(N/k)b_Q(pk)\displaybreak[0]\\
  \widetilde{b_Q^*}(pN) & = \ord_{z=Q}\left(\widetilde{\Psi_{pN}^*}\right)\\
  &=
\begin{cases}
0 & \text{if } Q\in \Fix(h) \\
b_Q^*(pN) & \text{otherwise.}
\end{cases}
\end{align*}
\end{define}

\begin{lemma}\label{a1}\label{an}
 Suppose $Q$ has $h$-period~$1$ for $\phi$ and $Q \notin \Fix(h)$.  If $p \mid N$, then $b_Q(pN) = 0$.  If $p \nmid N$, then
\begin{align}
\left(\frac{\lambda_1(\phi,Q) }{\lambda_1(h^j, Q)}\right)^{N}\neq 1 \text{ for all $j$}
&\Longrightarrow b_Q(pN)=b_Q(p) = 1\\
\begin{matrix}
\left(\frac{\lambda_1(\phi,Q) }{\lambda_1(h^j, Q)}\right)^{N}= 1\text{ for some $j$}\\
\text{ and } \lambda_1(\phi,Q)\neq \lambda_1(h^j, Q)\\
\end{matrix} \Bigg\}
&\Longrightarrow b_Q(pN)>b_Q(p) = 1\\
\lambda_1(\phi, Q)= \lambda_1(h^j, Q) \text{ for some $j$}\
&\Longrightarrow
\begin{cases}
b_Q(pN) > b_Q(p)>1 & \text{if }\ch K \mid  N  \\
 b_Q(pN) = b_Q(p)>1 & \text{otherwise.}
 \end{cases}\label{eqlambdas}
\end{align}

\end{lemma}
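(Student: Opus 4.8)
The plan is to isolate the one factor of $\Psi_{pN}$ that can vanish at $Q$ and read off its order of vanishing from the Taylor coefficients $\lambda_i$, using Lemmas~\ref{equalcoeffs}, \ref{derivrel1}, \ref{derivrel2}, \ref{creqns} and~\ref{phij-hj}. First I would normalize: since $Q$ has $h$-period~$1$ and $Q\notin\Fix(h)$, there is a unique $j_1\in\{1,\dots,p-1\}$ with $\phi(Q)=h^{j_1}(Q)$, and replacing $h$ by $h^{j_1}$ --- which generates $\langle h\rangle$, still lies in $\Aut(\phi)$, still has order $p$, and by Lemma~\ref{equalcoeffs} affects none of the quantities in the statement --- we may assume $\phi(Q)=h(Q)$, so that $Q_i=h^i(Q)=\phi^i(Q)$ for all $i$ (in particular $\infty\notin\mathcal O_\phi(Q)$, by the standing convention on $\mathcal O_h(Q)$) and $\phi^N(Q)=h^N(Q)$. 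Writing $\Psi_{pN}=\prod_{j=1}^{p-1}\left(\phi^N-h^j\right)$, the factor $\phi^N(z)-h^j(z)$ vanishes at $Q$ iff $h^N(Q)=h^j(Q)$, i.e.\ iff $h^{N-j}(Q)=Q$; since $Q\notin\Fix(h)$ and $h$ has prime order, this occurs exactly when $N\equiv j\pmod p$. So if $p\mid N$ then no factor vanishes at $Q$ and $b_Q(pN)=0$, which is the first assertion; and if $p\nmid N$ there is exactly one such index, $j_0\equiv N\pmod p$ with $1\le j_0\le p-1$, and $b_Q(pN)=\ord_{z=Q}\left(\phi^N(z)-h^{j_0}(z)\right)$. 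I would record here that $h^N$ and $h^{j_0}$ are the \emph{same} M\"obius map, hence $\lambda_i(h^N,Q)=\lambda_i(h^{j_0},Q)$ for all~$i$.

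Next, by the remark preceding Lemma~\ref{equalcoeffs}, $b_Q(pN)\ge n+1$ iff $\lambda_i(\phi^N,Q)=\lambda_i(h^{j_0},Q)$ for $0\le i\le n$; the $\lambda_0$'s agree, so $b_Q(pN)\ge1$. For the first-order comparison, the chain rule and identity~\eqref{lambdaseq} of Lemma~\ref{derivrel2} give
\[
\lambda_1(\phi^N,Q)=\prod_{i=0}^{N-1}\lambda_1(\phi,Q_i)=\left(\frac{\lambda_1(\phi,Q)}{\lambda_1(h,Q)}\right)^{N}\lambda_1(h^N,Q),
\]
and since $\lambda_1(h^N,Q)=\lambda_1(h^{j_0},Q)\ne0$ (Lemma~\ref{derivrel1}) this yields $b_Q(pN)\ge2$ iff $\left(\lambda_1(\phi,Q)/\lambda_1(h,Q)\right)^{N}=1$; taking $N=1$ shows $b_Q(p)=1$ iff $\lambda_1(\phi,Q)\ne\lambda_1(h,Q)$. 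The first two implications now follow immediately: if the $N$-th power is $\ne1$ then the ratio itself is $\ne1$, so $b_Q(p)=1$ and $b_Q(pN)=1$; if the $N$-th power is $1$ but the ratio is $\ne1$, then $b_Q(p)=1$ while $b_Q(pN)\ge2$, so $b_Q(pN)>b_Q(p)=1$.

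The remaining case $\lambda_1(\phi,Q)=\lambda_1(h,Q)$ --- that is,~\eqref{eqlambdas} --- is the heart of the matter and the step I expect to be the main obstacle. Let $e$ be the least positive integer with $\lambda_e(\phi,Q)\ne\lambda_e(h,Q)$; this exists because $\deg\phi\ge2>1=\deg h$ forces $\phi\ne h$, so their expansions at $Q$ cannot agree to all orders, and here $e\ge2$ with $b_Q(p)=\ord_{z=Q}(\phi(z)-h(z))=e$. By Lemma~\ref{creqns}, $\lambda_j(\phi^N,Q)=\lambda_j(h^N,Q)=\lambda_j(h^{j_0},Q)$ for $1\le j\le e-1$, so $b_Q(pN)\ge e$; and Lemma~\ref{phij-hj} computes
\[
\lambda_e(\phi^N,Q)-\lambda_e(h^{j_0},Q)=\lambda_e(\phi^N,Q)-\lambda_e(h^N,Q)=N\left(\prod_{i=1}^{N-1}\lambda_1(\phi,Q_i)\right)\bigl(\lambda_e(\phi,Q)-\lambda_e(h,Q)\bigr).
\]
The product over~$i$ is nonzero (each $\lambda_1(\phi,Q_i)\ne0$, using $\lambda_1(\phi,Q)=\lambda_1(h,Q)\ne0$ and~\eqref{lambdaseq}) and $\lambda_e(\phi,Q)-\lambda_e(h,Q)\ne0$ by the choice of~$e$, so the right-hand side vanishes precisely when $N=0$ in~$K$, i.e.\ when $\ch K\mid N$. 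Hence if $\ch K\nmid N$ then $\lambda_e(\phi^N,Q)\ne\lambda_e(h^{j_0},Q)$ and $b_Q(pN)=e=b_Q(p)>1$, whereas if $\ch K\mid N$ then $b_Q(pN)>e=b_Q(p)>1$, which is exactly~\eqref{eqlambdas}. The subtle points in this last case are to introduce $e$ correctly, to apply Lemma~\ref{phij-hj} with that~$e$, and --- crucially --- to observe that $h^N=h^{j_0}$ as maps so that all their Taylor coefficients at $Q$ coincide; the rest is the bookkeeping already carried out in Lemmas~\ref{creqns} and~\ref{phij-hj}.
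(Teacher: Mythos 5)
Your proof is correct, and it follows the same overall strategy as the paper: normalize so that $\phi(Q)=h(Q)$, isolate the single factor of $\Psi_{pN}$ vanishing at $Q$, and read off the order of vanishing from the Taylor coefficients $\lambda_i$ via Lemmas~\ref{equalcoeffs}, \ref{derivrel1}, \ref{derivrel2}, \ref{creqns} and~\ref{phij-hj}.

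The one genuinely organizational difference is in case~\eqref{eqlambdas}. The paper writes $N=pM+j$, computes the expansion $\phi^p(z)=z+\lambda_e(\phi^p,0)z^e+\mathcal O(z^{e+1})$, iterates it to get $\phi^{pM}$, composes with $\phi^j$, and only invokes Lemma~\ref{phij-hj} with argument $p$ (to rewrite $\lambda_e(\phi^p,0)$). You instead observe that $h^N$ and $h^{j_0}$ are the same M\"obius map and apply Lemma~\ref{phij-hj} directly with $j=N$, obtaining $\lambda_e(\phi^N,Q)-\lambda_e(h^{j_0},Q)=N\bigl(\prod_{i=1}^{N-1}\lambda_1(\phi,Q_i)\bigr)\bigl(\lambda_e(\phi,Q)-\lambda_e(h,Q)\bigr)$ in one step. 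This buys you a shorter path (it avoids re-deriving, by iteration of $\phi^p$, what Lemma~\ref{phij-hj} already asserts for the iterate $\phi^{pM+j}$), while the paper's route makes the dependence on $M$ and $j$ separately visible, which is more in the style it uses in the subsequent Proposition~\ref{polyprop}. Both give exactly the same vanishing criterion, $N=0$ in $K$. Your handling of the remaining small points --- the single vanishing index $j_0\equiv N\pmod p$, the non-vanishing of the $\lambda_1(\phi,Q_i)$, and the use of Lemma~\ref{creqns} with $e-1$ in place of $e$ to kill the lower-order terms --- is all correct.
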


\begin{proof}
Before continuing, we have some reductions.
From the definition, we have 
$$
\Psi_{pN,\phi,h} = \Psi_{pN,\phi,h^j}
$$
 for any $1\leq j \leq p-1$.  So by renaming the automorphism, we may assume that $\phi(Q) = h(Q) \neq Q$.   
By the proof of Lemma~\ref{msmall}, then,  $\phi^k(Q) = h^k(Q)$ for all $k$.  In other words, for all $Q_i\in \mathcal O_h(Q)$, $\phi(Q_i) = h(Q_i)$. 
If $Q$ is a point such that $\phi(Q) = h(Q)$, we may change coordinates so that $Q=0 =[0:1]$ and no $Q_i \in \mathcal O_h(Q)$  satisfies $Q_i = \infty = [1:0]$.  We will see that the condition 
$\left(\frac{\lambda_1(\phi,Q) }{\lambda_1(h, Q)}\right)^{N} = 1$
is equivalent to 
$\lambda_1(\phi^{N},Q) = \lambda_1(h^j, Q)$ for $N \equiv j \pmod p$.  By Lemma~\ref{equalcoeffs} each of the conditions above are preserved under this change.

Since $h$ has order $p$, every point $Q\in \PP^1$ has period $p$ under $h$.  For $Q \notin \Fix(h)$, the primitive period for $Q$ must be $p$.  Therefore, $Q$ has primitive period $p$ for $\phi$ as well.  Since $\phi^p(Q) =Q\neq h^j(Q)$ for any $1\leq j \leq p-1$, we see that $b_Q(pN) = 0$ whenever $p \mid N$.

Because $Q$ has primitive period $p$ for $h$, the orbit $\mathcal O_h(Q)$ consists of $p$ distinct points.   We assume that $\phi(Q) = h(Q)$, so $\phi(Q) \neq h^j(Q)$ for $2 \leq j \leq p-1$.  Hence, $b_Q(p) = \ord_{z=Q}\left(\phi(z) - h(z)\right)$.   We now proceed, focusing just on this term.

For each $i$, we have
\begin{eqnarray}
\phi(z) &=&  Q_{i+1} +  \sum_{j=1}^{n}\lambda_j(\phi,Q_i)( z-Q_i)^j + \mathcal{O}\left((z-Q_i)^{n+1}\right)\label{phiaround0} \\
h(z) &=&  Q_{i+1} +  \sum_{j=1}^{n}\lambda_j(h,Q_i)( z-Q_i)^j + \mathcal{O}\left((z-Q_i)^{n+1}\right)
\label{haround0}\\
\phi(z) - h(z) &=&  \sum_{j=1}^{n}\left(\lambda_j(\phi,Q_i)-\lambda_j(h,Q_i)\right)(z-Q_i)^j +  \mathcal{O}\left((z-Q_i)^{n+1}\right).\nonumber
\end{eqnarray}
From this, we see that $b_Q(p) = 1$ if $\lambda_1(\phi,Q) \neq \lambda_1(h,Q)$.  Otherwise $b_Q(p) = e >1$ where $e $ is the smallest integer such that $\lambda_e(\phi,Q)\neq \lambda_e(h,Q)$.  (Note  there must be such an $e$ since both are rational maps, but $\deg \phi >\deg h$.)

Assume now that  $\lambda_1(\phi,Q) \neq \lambda_1(h,Q)$, and  let $N=pM+j$ for some $1\leq j \leq p-1$.  Iterating equation~\eqref{phiaround0} starting with $Q_0 =0$, we have
\begin{align}
\phi(z) 
& =  Q_1 + \lambda_1(\phi, 0) z + \mathcal{O}\left(z^{2}\right) \label{phi1}\\
\phi^{p}(z) 
& = \left( \prod_{i=0}^{p-1} \lambda_1(\phi, Q_i)\right) z  + \mathcal{O}(z^{2}) 
\qquad \text{(recall that $Q_{p}=Q_0=0 $)} \displaybreak[0] \nonumber \\
&= \left( \frac{\lambda_1(\phi, 0)}{\lambda_1(h, 0)}\right)^p z + \mathcal{O}(z^{2})  
\qquad\qquad\text{by Lemma~\ref{derivrel2}.}\displaybreak[0]\label{phip}\\
\phi^{pM}(z)
 & =   \left( \frac{\lambda_1(\phi, 0)}{\lambda_1(h, 0)}\right)^{pM} z + \mathcal{O}(z^{2}).\displaybreak[0]\nonumber\\
\phi^{pM+j}(z) 
& = Q_{j} + \left( \frac{\lambda_1(\phi, 0)}{\lambda_1(h, 0)}\right)^{pM} \prod_{i=0}^{j-1}\lambda_1(\phi, Q_i) z + \mathcal{O}(z^{2}).\label{2n+1coeff}
\end{align}
Combining this with equation~\eqref{haround0}, we have
\begin{align*}
\phi^{pM+j}(z) -h^j(z) 
&=
\left( \left( \frac{\lambda_1(\phi, 0)}{\lambda_1(h, 0)}\right)^{pM} \prod_{i=0}^{j-1} \lambda_1(\phi, Q_i) - \lambda_1(h^j, 0)\right) z + \mathcal{O}(z^{2})\\
&=
\left( \left( \frac{\lambda_1(\phi, 0)}{\lambda_1(h, 0)}\right)^{pM} \prod_{i=0}^{j-1} \lambda_1(\phi, Q_i) - \prod_{i=0}^{j-1} \lambda_1(h, Q_i) \right) z + \mathcal{O}(z^{2}).
 \end{align*}
So $b_Q(pN) = b_Q(p) = 1$ unless  
the coefficient of $z$ above vanishes, or in other words unless
\begin{align}
\left( \frac{\lambda_1(\phi, 0)}{\lambda_1(h, 0)} \right)^{pM}
\prod_{i=0}^{j-1}\frac{ \lambda_1(\phi, Q_i)}{\lambda_1(h, Q_i)}
&=1
&&\text{(since $\prod_{i=0}^{j-1} \lambda_1(h, Q_i) \neq 0$ by Lemma~\ref{derivrel1}).}\nonumber\\
\intertext{Equivalently, $b_Q(pN) = b_Q(p) = 1$ unless}
\left( \frac{\lambda_1(\phi, 0)}{\lambda_1(h, 0)} \right)^{pM +j} &= 1 &&\text{(by equation~\eqref{lambdaseq}).}
\label{lam1phim}
\end{align}
The first two assertions follow from this.

We now consider the case that $\lambda_1(\phi, Q)= \lambda_1(h, Q) $.  We saw above that in this case $b_Q(1)= e >1$ where $e$ is the smallest positive integer such that $\lambda_e(\phi, Q) \neq \lambda_e(h, Q) $.    
 So by Lemma~\ref{creqns}, 
 $\lambda_1(\phi^p, 0)= 1$, and 
$\lambda_j(\phi^p, 0)= 0$ for all $2 \leq j < e$.  Therefore 
\begin{eqnarray}
\phi^p(z) & = & z + \lambda_e(\phi^p,0) z^e + \mathcal{O}(z^{e+1}).\label{betae1}
\end{eqnarray}
We may iterate $\phi^p$ in this simpler form to find that
\begin{eqnarray*}
\phi^{pM}(z) & = & z + M\lambda_e(\phi^p,0) z^e + \mathcal{O}(z^{e+1}).
\end{eqnarray*}
Composing this version of $\phi^{pM}(z)$ with the expansion of $\phi$ in equation~\eqref{phiaround0}, we find the following.
\begin{align*}
\phi^{pM +j}(z)
 & = Q_{j} + \lambda_1(\phi^j,0)\left(z + M\lambda_e(\phi^p,0) z^e + \mathcal{O}(z^{e+1})\right) \\
& \qquad+ \lambda_2(\phi^j,0)\left(z + M\lambda_e(\phi^p,0) z^e + \mathcal{O}(z^{e+1})\right)^2 + \cdots\\
&= Q_{j} + \sum_{i=1}^{e-1}\lambda_i(\phi^j,0) z^i +\left(\lambda_1(\phi^j,0) n\lambda_e(\phi^p,0) + \lambda_e(\phi^j,0)\right)z^e + \mathcal{O}(z^{e+1}).\\
\phi^{pM+j}(z) &-h^j(z) 
=  \sum_{i=1}^{e-1}\left( \lambda_i(\phi^j,0)  -  \lambda_i(h^j,0)\right)z^i \\
& \qquad \qquad +\left(\lambda_1(\phi^j,0) M\lambda_e(\phi^p,0) + \lambda_e(\phi^j,0)-\lambda_e(h^j,0)\right)z^e +\mathcal{O}(z^{e+1}).
\end{align*}
By Lemma~\ref{creqns}, the terms $\lambda_i(\phi^j,0)  -  \lambda_i(h^j,0)$ vanish, hence $b_Q(pN) \geq e$. 
By Lemma~\ref{phij-hj},
  \begin{align*}
   \lambda_e(\phi^p,0) & = \lambda_e(\phi^p,0)- \lambda_e(h^p,0)
   \qquad\text{since } e>1 \text{ means } \lambda_e(h^p,0)=0,\\
   &=  p\left( \prod_{i=0}^{p-1}\lambda_1(\phi,Q_i)\right) \left(\lambda_e(\phi,0)-\lambda_e(h,0)\right).
   \end{align*}
   So the coefficient of $z^e$ vanishes if and only if
$$
\left(Mp\lambda_1(\phi^j,0)\prod_{i=1}^{p-1}\lambda_1(\phi,Q_i) +
j\prod_{i=1}^{j-1}\lambda_1(\phi,Q_i)\right)
\left(\lambda_e(\phi,0)
-\lambda_e(h,0)\right) = 0.
$$
Now,  $\lambda_e(\phi,0)-\lambda_e(h,0)\neq 0 $ by our choice of $e$. Further, 
 $$
 \lambda_1(\phi^j,0)=\prod_{i=0}^{j-1}\lambda_1(\phi,Q_i).
 $$
By Lemma~\ref{derivrel2}, 
$
\prod_{i=1}^{j-1}\lambda_1(\phi,Q_i)\neq 0,
$
 so we may divide by it.     Thus, $b_Q(pN)>e$ if and only if
\begin{equation*}
Mp\prod_{i=0}^{p-1}\lambda_1(\phi,Q_i) + j 
= 0.
\end{equation*}
From Lemma~\ref{derivrel2}, we know that 
$\prod_{i=0}^{p-1}\lambda_1(\phi,Q_i)=1,$
so this says $N=Mp+j =0$.
That is, the coefficient of $z^e$ vanishes if and only if the characteristic of $K$ divides $N$, and in this case $b_Q(pN) >e$. 
\end{proof}

\begin{lemma}\label{bnqfix}
 Suppose $Q \in \Fix(h)\cap \Fix(\phi)$ and $K$ has characteristic different from $p$.  Then

\begin{align}
\left(\frac{\lambda_1(\phi,Q) }{\lambda_1(h^j, Q)}\right)^{N}\neq 1 \text{ for all $j$}
&\Longrightarrow b_Q(pN)=b_Q(p) = p-1\\
\begin{matrix}
\left(\frac{\lambda_1(\phi,Q) }{\lambda_1(h^j, Q)}\right)^{N}= 1\text{ for some $j$}\\
\text{ and } \lambda_1(\phi,Q)\neq \lambda_1(h^j, Q)\\
\end{matrix} \Bigg\}
&\Longrightarrow b_Q(pN)>b_Q(p) = p-1\\
\lambda_1(\phi, Q)= \lambda_1(h^j, Q) \text{ for some $j$}\
&\Longrightarrow 
\begin{cases}
b_Q(pN) > b_Q(p)>p-1 & \text{if }\ch K \mid  N  \\
 b_Q(pN) = b_Q(p)>p-1 & \text{otherwise}.
 \end{cases}
\end{align}
\end{lemma}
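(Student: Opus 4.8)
The plan is to follow the proof of Lemma~\ref{a1}, but with the $h$-orbit of $Q$ now collapsed to the single point $Q$ and with the new feature, supplied by Lemma~\ref{derivrel1}, that $\lambda_1(h,Q)$ is a \emph{primitive} $p\tha$ root of unity. First I would move $Q$ to $[0:1]$; since $Q$ is fixed by both $\phi$ and $h$, no point of its orbit lies at infinity, and Lemma~\ref{equalcoeffs} shows the hypotheses on the $\lambda_i$ are unchanged. Write $\mu = \lambda_1(\phi,Q)$ and $\zeta = \lambda_1(h,Q)$, so that $\lambda_1(h^j,Q) = \zeta^j$ and, for $1 \le j \le p-1$, these are $p-1$ distinct nonzero numbers. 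Because the $h$-orbit of $Q$ is trivial, the chain rule gives $\lambda_1(\phi^N,Q) = \mu^N$; and since $\phi^N$ and $h^j$ both fix $Q$, the difference $\phi^N(z)-h^j(z)$ vanishes at $z=0$, to order exactly $1$ when $\mu^N \ne \zeta^j$ and to order $\ge 2$ when $\mu^N = \zeta^j$.

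As the $\zeta^j$ are distinct, $\mu^N$ can coincide with at most one of them, so $b_Q(pN) = (p-1) + \varepsilon_N$, where $\varepsilon_N = 0$ if $\mu^N \notin \{\zeta,\dots,\zeta^{p-1}\}$ and $\varepsilon_N = \ord_{z=Q}\!\bigl(\phi^N(z)-h^{j_0}(z)\bigr) - 1 \ge 1$ if $\mu^N = \zeta^{j_0}$. Specializing to $N=1$ gives $b_Q(p) = p-1$ unless $\mu = \zeta^{j_1}$ for some $1 \le j_1 \le p-1$, in which case $b_Q(p) = (p-2)+e$ with $e := \ord_{z=Q}\!\bigl(\phi(z)-h^{j_1}(z)\bigr)$, which is finite and $\ge 2$ because $\deg\phi \ge 2 > 1 = \deg h^{j_1}$ forces $\phi \ne h^{j_1}$. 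Exactly as in Lemma~\ref{a1}, and under the standing assumption $p\nmid N$, the trichotomy in the hypotheses corresponds to: (i) $(\mu/\zeta^j)^N \ne 1$ for all $j$, which forces both $\mu$ and $\mu^N$ to avoid the nontrivial $p\tha$ roots of unity, hence $b_Q(pN) = b_Q(p) = p-1$; (ii) $(\mu/\zeta^j)^N = 1$ for some $j$ but $\mu$ not a nontrivial $p\tha$ root of unity, so $\varepsilon_N \ge 1$ while $b_Q(p) = p-1$; (iii) $\mu = \zeta^{j_1}$, so that $b_Q(p) > p-1$ and one must compare the two excesses.

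Case (iii) is the substantive one. Here I would replace $h$ by $h^{j_1}$ as the reference automorphism --- legitimate since $\gcd(j_1,p)=1$, so $h^{j_1}\in\Aut(\phi)$ still has order $p$ --- so that $\lambda_1(\phi,Q)=\lambda_1(h^{j_1},Q)$ and $e$ is the least index at which $\lambda_e(\phi,Q)\ne\lambda_e(h^{j_1},Q)$. Lemma~\ref{creqns} then gives $\lambda_i(\phi^N,Q)=\lambda_i\bigl((h^{j_1})^N,Q\bigr)$ for all $i<e$, so $\phi^N(z)-h^{j_1 N}(z)$ vanishes to order at least $e$, and Lemma~\ref{phij-hj}, in which every product $\prod_i \lambda_1(\phi,Q_i)$ collapses to a power of $\mu$ since $Q_i = Q$, yields
\[
\lambda_e(\phi^N,Q) - \lambda_e\!\bigl((h^{j_1})^N,Q\bigr) = N\,\mu^{\,N-1}\bigl(\lambda_e(\phi,Q) - \lambda_e(h^{j_1},Q)\bigr).
\]
Since $\mu \ne 0$ and $\lambda_e(\phi,Q)\ne\lambda_e(h^{j_1},Q)$, the right side vanishes precisely when $\ch K \mid N$. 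Hence $\ord_{z=Q}\bigl(\phi^N(z)-h^{j_1 N}(z)\bigr) = e$ if $\ch K \nmid N$ and $> e$ if $\ch K \mid N$; as the remaining $p-2$ factors contribute $1$ each, this gives $b_Q(pN) = (p-2)+e = b_Q(p)$ in the first case and $b_Q(pN) > b_Q(p)$ in the second, which is the third implication.

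I expect the only real difficulty to be bookkeeping rather than ideas: one must track which index among $1,\dots,p-1$ carries the excess --- namely $j_1 N \bmod p$ for $\phi^N$ versus $j_1$ for $\phi$ --- verify it is nonzero so the excess genuinely occurs, and, as in Lemma~\ref{a1}, use coprimality of $p$ with $N$ to see that cases (i) and (ii) of the hypotheses match the dichotomy ``$\mu^N$ is / is not a nontrivial $p\tha$ root of unity.'' Everything else is a routine unwinding of the power-series expansions of $\phi^N$ and $h^j$ around $Q$, with the orbit of $Q$ frozen at $Q$.
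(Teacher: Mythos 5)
Your proposal is correct and follows the same route as the paper: identify via Lemma~\ref{derivrel1} that $\lambda_1(h^j,Q)$, $j=1,\dots,p-1$, are distinct nontrivial $p\tha$ roots of unity so that at most one factor $\phi^N(z)-h^j(z)$ of $\Psi_{pN}$ vanishes to order greater than $1$ at $Q$, and then apply the single-factor analysis of Lemma~\ref{an} to that one factor. The paper simply delegates the last step with ``proceeds exactly as in Lemma~\ref{an}, focusing on that one factor''; your explicit calculation, in particular the use of Lemma~\ref{phij-hj} with the $h$-orbit of $Q$ collapsed to $\{Q\}$ to obtain $\lambda_e(\phi^N,Q)-\lambda_e((h^{j_1})^N,Q)=N\mu^{N-1}\bigl(\lambda_e(\phi,Q)-\lambda_e(h^{j_1},Q)\bigr)$, is a correct unwinding of that delegation.
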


\begin{proof}
If $\phi(Q) = Q$ and $h(Q) = Q$, then $z=Q$ is a root of $\phi(z) - h^j(z)$ for every $1\leq j \leq p-1$.  This gives the lower bound on $b_Q(p)$.

From Lemma~\ref{derivrel1}, $\lambda_1(h^j, Q)$ is a primitive $p\tha$ root of unity for each $j$.  But also 
$$
\lambda_1(h^j,Q) = \left(\lambda_1(h,Q)\right)^j
$$
 by the definition of the $\lambda_i$ and the fact that $Q \in \Fix(h)$.  Hence, if $i \neq j$, then 
 $$
 \lambda_1(h^i,Q) \neq \lambda_1(h^j,Q).
 $$  So if $\lambda_1(\phi,Q) = \lambda_1(h^j,Q)$, then $\lambda_1(\phi,Q) \neq \lambda_1(h^i,Q)$ for all $1 \leq i \leq p-1$ with $i \neq j$.  

That is, if $b_Q(p)>p-1$, the excess is accounted for by a single factor of $\Psi_{p}(x,y)$.  The rest of the proof proceeds exactly as in Lemma~\ref{an}, focusing on that one factor.
\end{proof}

We conclude the section by stating Proposition~3.2 from~\cite{dynunit}, which will be used in the sequel.  

\begin{prop}\label{fromdynunit}
Let $K$ be a field, $X/K$ a smooth projective curve, and let $\phi\colon X \to X$ be a non-constant morphism defined over $K$ such that $\phi^n$ is non-degenerate \textup(that is, such that the graph of $\phi^N$ and the diagonal intersect properly\textup).  Fix a point $Q \in X$ and define integers $m$, $q$, $r$ by 
\begin{eqnarray*}
m &=& \text{ the primitive period of $Q$ \textup(set $m = \infty$ if $Q \notin \Per(\phi)$\textup),}\\
q &=& \text{the characteristic of $K$,}\\
r &=& \text{the multiplicative period of $\left(\phi^m\right)'(Q)$ in $\overline K^*$}\\
&&\text{\textup(set $r = \infty$ if $m = \infty $ or if $\left(\phi^m\right)'(Q)$ is not a root of unity\textup).}
\end{eqnarray*}
\begin{enumerate}[\textup(a\textup)]
\item
$a_Q^*(N) \geq 0$ for all $N \geq 1$.
\item
Let $N\geq 1$.  Then $a_Q^*(N) \geq 1$ if and only if one of the following three conditions is true:
\begin{enumerate}[\textup(i\textup)]
\item
$N=m$.
\item
$N=mr$. 
\item
$N=q^smr$ for some $s\geq 0$.
\end{enumerate}
\end{enumerate}

\end{prop}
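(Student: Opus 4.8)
The plan is to deduce the proposition from Lemma~3.4 (stated above) together with Möbius inversion, once the function $N\mapsto a_Q(N)$ is completely understood. If $Q$ is not periodic, then $\phi^N(Q)\neq Q$ for every $N$, so $a_Q(N)=\ord_{z=Q}(\phi^N(z)-z)=0$ for all $N$, hence $a_Q^*(N)=0$ for all $N$ and both assertions are vacuous; so assume $m<\infty$. Then $a_Q(N)>0$ forces $\phi^N(Q)=Q$, i.e.\ $m\mid N$, so $a_Q(N)=0$ whenever $m\nmid N$; and when $m\mid N$ the non-degeneracy hypothesis ($\phi^N\neq\mathrm{id}$) makes $a_Q(N)$ a finite positive integer and makes $Q$ a genuine fixed point of $\phi^m$ (and of $\phi^{mr},\phi^{mrq},\dots$ when $r<\infty$), so Lemma~3.4 applies to each of those maps.

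The main step is to extract the ``staircase'' shape of $a_Q$ by iterating Lemma~3.4. Applied to $\psi=\phi^m$, part~(a) gives $a_Q(\phi^{m\ell})\geq a_Q(\phi^m)$, and part~(b) says the value stays equal to $a_Q(\phi^m)$ as long as $(\phi^m)'(Q)^\ell\neq1$ --- i.e.\ as long as $r\nmid\ell$ --- and first increases when $r\mid\ell$. Since $(\phi^{mr})'(Q)=(\phi^m)'(Q)^r=1$, the point $Q$ is a non-simple fixed point of $\phi^{mr}$, so $a_Q(\phi^{mr})\geq2$; re-applying Lemma~3.4 to $\phi^{mr}$, then to $\phi^{mrq}$, then to $\phi^{mrq^2}$, and so on, the only way to raise $a_Q$ further is to multiply the exponent by the characteristic $q$, so $a_Q$ is constant on each block $\{N:mrq^s\mid N,\ mrq^{s+1}\nmid N\}$ and strictly increases with $s$. (If $(\phi^m)'(Q)=1$ already, i.e.\ $r=1$, the thresholds are $m\mid mq\mid mq^2\mid\cdots$ and the first jump, at $N=m$, already has size $\geq2$; if $r=\infty$ or $q=0$ the list of thresholds truncates.) Writing $d_1\mid d_2\mid d_3\mid\cdots$ for the distinct elements of $\{m\}\cup\{mrq^s:s\geq0\}$ and $c_j\geq1$ for the successive increments of the staircase, this whole discussion is just the identity
\[
a_Q(k)\;=\;\sum_{j\geq1}c_j\cdot\mathbf{1}[\,d_j\mid k\,]\qquad\text{for all }k\geq1 .
\]

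Möbius inversion then finishes everything. Using $\sum_{e\mid n}\mu(n/e)=\mathbf{1}[\,n=1\,]$ and the substitution $k=d_je$,
\[
a_Q^*(N)\;=\;\sum_{k\mid N}\mu(N/k)\,a_Q(k)
\;=\;\sum_{j\geq1}c_j\sum_{d_j\mid k\mid N}\mu(N/k)
\;=\;\sum_{j\geq1}c_j\cdot\mathbf{1}[\,N=d_j\,].
\]
As the $d_j$ are distinct, at most one term survives: this yields $a_Q^*(N)\geq0$ for every $N$, which is part~(a), and $a_Q^*(N)\geq1$ exactly when $N=d_j$ for some $j$, i.e.\ when $N\in\{m\}\cup\{mrq^s:s\geq0\}$. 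That set equals $\{N=m\}\cup\{N=mr\}\cup\{N=q^smr:s\geq0\}$ --- with (ii) the $s=0$ case of (iii), and the expected collapses when $r=1$, $r=\infty$, or $\ch K=0$ --- which is part~(b).

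I expect the only real difficulty to be the bookkeeping in the middle step: one must iterate Lemma~3.4 at every threshold rather than just once, keep the regimes $r>1$ (a jump of size $1$ at level $m$, then another jump at level $mr$) and $r=1$ (a single jump of size $\geq2$ at level $m$) apart, and truncate the list $d_1,d_2,\dots$ correctly when $r=\infty$ or $\ch K=0$, all while verifying that every recorded increment $c_j$ really is $\geq1$ --- that strictness is exactly what turns the ``if and only if'' of part~(b) into an equality of sets rather than a mere containment.
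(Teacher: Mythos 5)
The paper does not actually prove this proposition: it is quoted as Proposition~3.2 of Morton and Silverman and used as a black box, so there is no ``paper's proof'' to compare against. Your argument is, however, correct, and it is the natural (and, I believe, essentially the original) derivation of the statement from the also-quoted Lemma~3.4. The idea is exactly right: pass to $\psi=\phi^m$ and iterate the lemma to discover that $k\mapsto a_Q(k)$ is supported on multiples of $m$ and is a nondecreasing ``staircase'' whose jump thresholds form the chain $m\mid mr\mid mrq\mid mrq^2\mid\cdots$ (with $r=1$ collapsing the first two thresholds into a single jump of size at least $2$, and the chain truncating at $mr$ when $q=0$ or at $m$ when $r=\infty$), and then Möbius-invert. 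You handle the one delicate case distinction correctly: the jump at level $m$ is governed by Lemma~3.4(b)(i) because $a_Q(\phi^m)=1$ exactly when $r>1$, whereas every later jump is governed by Lemma~3.4(b)(ii), since once $\left(\phi^{mr}\right)'(Q)=1$ we have $a_Q(\phi^{mr})\geq 2$ and the multiplier condition becomes irrelevant. The Möbius step $\sum_{d_j\mid k\mid N}\mu(N/k)=\mathbf{1}\left[N=d_j\right]$ then isolates at most one (strictly positive) increment, yielding both (a) and (b) at once. It is worth observing that the paper's Proposition~\ref{polyprop}, the $h$-tuned analogue, is proved by precisely this template --- a threshold analysis governed by multipliers, followed by a Möbius computation --- so although the paper does not reproduce a proof of Proposition~\ref{fromdynunit}, your argument mirrors the style the paper uses whenever it does carry out such a calculation.
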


\section{Reducibility results}\label{red}

We now prove one of the main results of this paper.  

\begin{theorem}\label{mainthmred}
Let $\phi$ be a rational map with an automorphism $h$ of prime order~$p$.
Let  $K$ be a field over which $\phi$, $h$, and the points in $\Fix(h)$ are all defined.
\begin{enumerate}[\textup(a\textup)]
\item
If $K$ has characteristic~$0$, then the dynatomic polynomial $\Phi_{pN}^*$ is reducible over $K$ for all but finitely many values of~$N$.
\item
For $K$ of arbitrary characteristic, the dynatomic polynomial $\Phi_{pN}^*$ is reducible over $K$ for all but finitely many prime integers~$N$.
\end{enumerate}
\end{theorem}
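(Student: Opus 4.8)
\medskip
\noindent\textbf{Proof strategy.}
My plan is to exhibit an explicit proper nontrivial factor of $\Phi_{pN}^*$ over $K$, namely the $h$-tuned dynatomic polynomial $\widetilde{\Psi_{pN,\phi,h}^*}$ of Definition~\ref{psidef}. Concretely I want to establish three facts and then combine them: \emph{(i)} $\widetilde{\Psi_{pN,\phi,h}^*}\in K[x,y]$; \emph{(ii)} $\widetilde{\Psi_{pN,\phi,h}^*}$ divides $\Phi_{pN}^*$ in $K[x,y]$; and \emph{(iii)} $1\le\deg\widetilde{\Psi_{pN,\phi,h}^*}<\deg\Phi_{pN}^*=\nu_d(pN)$ for $N$ in the stated ranges. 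Granting these, $\Phi_{pN}^*=\widetilde{\Psi_{pN,\phi,h}^*}\cdot\bigl(\Phi_{pN}^*/\widetilde{\Psi_{pN,\phi,h}^*}\bigr)$ is a factorization in $K[x,y]$ into two polynomials of positive degree, so $\Phi_{pN}^*$ is reducible over $K$; part~(a) of the theorem is then the case $\ch K=0$ and part~(b) the case of prime $N$.

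For \emph{(i)}: the factors $\phi^N-h^j$ building $\Psi_{pN,\phi,h}$ have coefficients in $K$ because $\phi$ and $h$ are defined over $K$, and the linear forms $y_ix-x_iy$ divided off have coefficients in $K$ because $\Fix(h)$ is defined over $K$, so it only remains to see that the quotient genuinely has no poles, i.e.\ that $b_Q^*(pN)\ge0$ at every $Q\notin\Fix(h)$ and that the poles at $\Fix(h)$ are exactly cancelled; this is precisely the content of Proposition~\ref{polyprop}, which I expect to prove from Lemma~\ref{an} together with M\"obius inversion, in the same spirit as the Morton--Silverman proof that $Z_N^*$ is effective.

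For \emph{(ii)}: since $\overline K[x,y]$ is a UFD in which every binary form splits into linear factors, divisibility is equivalent to the pointwise inequality $\widetilde{b_Q^*}(pN)\le a_Q^*(pN)$ for every $Q\in\PP^1$, and once this is proved over $\overline K$ it descends to $K[x,y]$ because both polynomials already lie there. If $Q$ is not of $h$-period $N$ for $\phi$ the left side is $0$ and there is nothing to check; if $Q$ has $h$-period $N$ then $\phi^{pN}(Q)=Q$ by Lemma~\ref{msmall}, and one compares the two orders of vanishing case by case --- according to whether $Q\in\Fix(h)$, whether $Q\in\Fix(\phi)$, and whether the ratios $\lambda_1(\phi,Q)/\lambda_1(h^j,Q)$ are roots of unity --- using Lemmas~\ref{an} and~\ref{bnqfix} on the left and Lemma~3.4 of~\cite{dynunit} together with Proposition~\ref{fromdynunit} on the right. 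This bookkeeping of orders of vanishing is the technical heart of the argument and, I expect, the main obstacle.

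For \emph{(iii)}: by Lemma~\ref{2fp}(a), $\Fix(h)=\{P_1,P_2\}$ when $\ch K\ne p$ (the case $\ch K=p$ being handled similarly with a single fixed point), so $\deg\widetilde{\Psi_{pN,\phi,h}^*}=\deg\Psi_{pN,\phi,h}^*-b_{P_1}^*(pN)-b_{P_2}^*(pN)$. A direct computation of leading coefficients gives $\deg\Psi_{pk,\phi,h}=(p-1)(d^k+1)$, hence $\deg\Psi_{pN,\phi,h}^*=(p-1)\sum_{k\mid N,\ pk\nmid N}\mu(N/k)(d^k+1)$; this sum has positive leading term $(p-1)(d^N+1)$, so a standard M\"obius estimate shows it is positive and, since $pN\ge2N$ forces $\nu_d(pN)=d^{pN}-\cdots$ to dominate it, also strictly less than $\nu_d(pN)$ for all but finitely many $N$ (and for $N$ prime with $N\ne p$ it equals the clean value $(p-1)(d^N-d)$). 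It then remains to bound $b_{P_i}^*(pN)$ from above. Lemma~\ref{bnqfix} shows $b_{P_i}(pk)$ is independent of $k$ except when $\ch K\mid k$ or when some $\lambda_1(\phi,P_i)/\lambda_1(h^j,P_i)$ is a nontrivial root of unity whose order divides $k$; a local expansion at the (then parabolic) fixed point $P_i$, valid because $\phi^r$ is a nonconstant rational map and using Lemma~\ref{phij-hj}, shows that in characteristic $0$ the excess $b_{P_i}(pk)-b_{P_i}(p)$ stays bounded as $k$ varies, so $b_{P_i}^*(pN)=O\bigl(\#\{\text{divisors of }N\}\bigr)$ is negligible next to $\deg\Psi_{pN,\phi,h}^*$ for all but finitely many $N$; while for arbitrary $K$ and $N$ prime, $b_{P_i}^*(pN)$ simply vanishes once $N$ avoids the finite set $\{p,\ \ch K,\ \text{orders of the relevant roots of unity}\}$. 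In either range this yields $1\le\deg\widetilde{\Psi_{pN,\phi,h}^*}<\nu_d(pN)$, and the theorem follows.
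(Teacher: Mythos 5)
The proposal follows the same structural skeleton as the paper: exhibit $\widetilde{\Psi_{pN,\phi,h}^*}$ as a proper nontrivial factor of $\Phi_{pN}^*$, verify it is a polynomial, verify divisibility, and verify a strict degree gap. Your steps (i) and (ii) are exactly Propositions~\ref{polyprop} and~\ref{psipnpoly}, and your upper bound $\deg\widetilde{\Psi_{pN}^*}<\deg\Phi_{pN}^*$ is the content of Proposition~\ref{degarg}. The place you genuinely depart from the paper is the nontriviality lower bound $\deg\widetilde{\Psi_{pN}^*}\ge 1$. The paper does \emph{not} estimate the contributions $b_{P_i}^*(pN)$ at the two fixed points of $h$; instead it shows directly that $\phi$ has a point $Q$ of primitive $h$-period $N$ (Corollary~\ref{poscharprop} for all but $d+6$ primes in arbitrary characteristic, and Proposition~\ref{0charprop} for all but a short explicit list of $(d,N)$ in characteristic~$0$). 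The characteristic-$0$ existence result is a Baker-style count that compares the total multiplicity of roots of $\Psi_{pN}$ not of primitive $h$-period $N$ against Beardon's bound $pN(d-1)$ on points lying on rationally indifferent cycles. Since such a $Q$ necessarily lies outside $\Fix(h)$ (points in $\Fix(h)$ have primitive $h$-period at most $2$ by Lemma~\ref{fplem}), this directly forces $\widetilde{\Psi_{pN}^*}$ to have a root, hence positive degree. Your replacement --- writing $\deg\widetilde{\Psi_{pN}^*}=\deg\Psi_{pN}^*-b_{P_1}^*(pN)-b_{P_2}^*(pN)$ and arguing the last two terms are negligible --- is an attractive and potentially more elementary alternative that avoids the complex-dynamics input.

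The gap is that for part~(a) you assert, but do not establish, that in characteristic~$0$ the quantity $b_{P_i}(pk)$ is uniformly bounded as $k$ ranges over divisors of $N$, so that $b_{P_i}^*(pN)=O(\#\{\text{divisors of }N\})$. This is the entire weight of the argument, since without it there is nothing to prevent the two subtracted terms from eating up all of $\deg\Psi_{pN}^*$ (and the paper's own Example for $\widetilde{\Psi_{6,\phi,g}^*}=1$ shows that exact cancellation really does happen). The paper's Lemma~\ref{phij-hj} and Lemma~\ref{creqns} do control the first jump $e=b_Q(p)$ and show it propagates under iteration, and I believe a parabolic-normal-form argument can be made to bound all of $b_{P_i}(pk)$, but the bookkeeping (you must handle the subcases $\lambda_1(\phi,P_i)/\lambda_1(h^j,P_i)$ equal to $1$, a nontrivial root of unity of order $r$, or not a root of unity, plus the period-$2$ case for $P_i$, and you must rule out that the ``excess'' order grows when $r\mid k$) is nontrivial and not written down. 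For part~(b) the proposal is essentially correct: for prime $N$ outside $\{2,p,\ch K,\text{orders of the relevant roots of unity}\}$ one has $b_{P_i}^*(pN)=0$, matching Proposition~\ref{bq*prop}, and then $\widetilde{\Psi_{pN}^*}=\Psi_{pN}^*$ has degree $(p-1)(d^N-d)>0$. So: right scaffolding and a correct prime-$N$ argument, but the characteristic-$0$ case rests on an unproven boundedness claim that the paper deliberately routes around.
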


The proof of Theorem~\ref{mainthmred} is split into several Propositions.  We outline the argument here:
\begin{itemize}
\item
If $N\geq 1$, then $\widetilde{\Psi_{pN}^*}$ is a polynomial.  (Proposition~\ref{polyprop}.)
\item
For all $N \geq 1$, we have $\widetilde{\Psi_{pN}^*}\mid \Phi_{pN}^*$. (Proposition~\ref{psipnpoly}.)
\item
For all $N> 1$, the $h$-tuned dynatomic polynomials satisfy $\deg \widetilde{\Psi_{pN}^*} < \deg\Phi_{pN}^*$.  In fact, this holds for $N \geq 1$ if $d>2$.  (Proposition~\ref{degarg}.)
\item
The $h$-tuned dynatomic polynomials  are nontrivial for almost all $N$, as described in the statement of the theorem.  (Corollary~\ref{poscharprop} for positive characteristic, and Proposition~\ref{0charprop} for characteristic~$0$.)
\end{itemize}

\begin{prop}\label{polyprop}
With the hypotheses in Theorem~\ref{mainthmred},
$\widetilde{\Psi_{pN}^*}\in K[x,y]$ for all $N\geq 1$.  More specifically, fix a point $Q \in \PP^1$ and define integers $m$, $q$, and $r$ by
\begin{align*}
m &= \text{the primitive $h$-period of $Q$ for $\phi$}\\
& \quad  \text{ \textup(set $m = \infty$ if $Q \notin \Per(\phi)$\textup), }\\
q & = \text{the characteristic of $K$,}\\
r & = \text{the multiplicative period of ${\lambda_1(\phi^m,Q)}/{\lambda_1(h^j,Q)}$, }
\\ & \quad \text{where $1\leq j\leq p-1 $ satisfies $\phi^m(Q) = h^j(Q)$}\\
& \quad \text{\textup(set $r = \infty$ if either $m = \infty $ or if ${\lambda_1(\phi^m,Q)}/{\lambda_1(h^j,Q)}$ }\\
& \quad \text{is not a root of unity for any $1\leq j\leq p-1$\textup).}
\end{align*}

\begin{enumerate}[\textup(a\textup)]
\item
$\widetilde{b_Q^*}(pN) \geq 0$ for all $N\geq 1$.
\item
Let $N\geq 1$.  Then $\widetilde {b_Q^*}(pN) \geq 1$ if and only if one of the following conditions is true.
\begin{enumerate}[\textup(i\textup)]
\item\label{case1}
$N=m$.
\item\label{case2}
$N=mr$,  with $p\nmid r$.  
\item\label{case3}
$N=mrq^s$ for some $s\geq 1$ and $p \nmid r$.
\end{enumerate}
\end{enumerate}
\end{prop}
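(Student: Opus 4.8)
The plan is to prove this exactly as Morton and Silverman prove Proposition~3.2 of~\cite{dynunit} (our Proposition~\ref{fromdynunit}), with Lemmas~\ref{an} and~\ref{bnqfix} taking the place of their Lemma~3.4. The entire content is a Möbius-inversion computation: Lemmas~\ref{an} and~\ref{bnqfix} describe how $b_Q(pk)=\ord_{z=Q}(\Psi_{pk})$ behaves as $k$ varies (for a fixed $Q$), and from this description one reads off $b_Q^*(pN)$ and hence $\widetilde{b_Q^*}(pN)$ via the formulas of Definition~\ref{apdef}; establishing non-negativity gives that $\widetilde{\Psi_{pN}^*}$ is a genuine polynomial, and tracking the support of the inverted function gives part~(b).

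First I would reduce to the case $m=1$. If $Q$ has finite primitive $h$-period $m$, set $\psi=\phi^m$; then $h\in\Aut(\psi)$ and $Q$ has primitive $h$-period $1$ for $\psi$. Since $\phi^k(Q)\in\mathcal O_h(Q)$ forces $m\mid k$ (Lemma~\ref{msmall}(b)), the term $b_Q(pk)$ for $\phi$ vanishes unless $m\mid k$, and $b_Q(pm\ell)$ for $\phi$ equals $b_Q(p\ell)$ computed for $\psi$ (the factors $\phi^{m\ell}(z)-h^j(z)$ and $\psi^\ell(z)-h^j(z)$ are identical). Re-indexing the truncated Möbius sum — writing $k=m\ell$, $N=mM$, so that the conditions $k\mid N$, $pk\nmid N$ become $\ell\mid M$, $p\ell\nmid M$ — yields $b_Q^*(pN)_\phi=b_Q^*(pM)_\psi$ when $m\mid N$, and $b_Q^*(pN)_\phi=0$ when $m\nmid N$; after this reduction the quantity $r$ in the Proposition is the multiplicative order of $\lambda_1(\psi,Q)/\lambda_1(h^j,Q)$. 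If $Q$ has no finite $h$-period (so $m=\infty$, in particular if $Q\notin\Per(\phi)$) then $b_Q(pk)=0$ for all $k$, so $b_Q^*(pN)=0$ and none of (i)--(iii) can hold. If $Q\in\Fix(h)$ then $\widetilde{b_Q^*}(pN)=0$ by definition, and such $Q$ carry $m=\infty$ by convention, again consistently with (i)--(iii) and with Example~\ref{psiegs}, where $\widetilde{\Psi^*}$ is precisely what survives stripping the $\Fix(h)$-factors. So it suffices to analyze $b_Q^*(pN)$ for $Q\notin\Fix(h)$ of primitive $h$-period~$1$ and transport the answer back via $N\mapsto mN$.

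Now fix such a $Q$, rename $h$ so that $\phi(Q)=h(Q)$, and let $\beta=\lambda_1(\phi,Q)/\lambda_1(h,Q)$, of multiplicative order $r$. Two observations: $b_Q^*(pN)=0$ whenever $p\mid N$, because the only terms $b_Q(pk)$ that survive the truncation $pk\nmid N$ have $p\mid k$, where Lemma~\ref{an} gives $b_Q(pk)=0$; and for $p\nmid N$ the truncation is vacuous, so $b_Q^*(pN)=\sum_{k\mid N}\mu(N/k)\,b_Q(pk)$. Lemma~\ref{an} says that for $p\nmid k$ one has $b_Q(pk)=c_0:=b_Q(p)\ge 1$ unless $\beta^{k}=1$, in which case $b_Q(pk)$ is strictly larger, with further jumps (in positive characteristic) governed by $v_q(k/r)$ and none in characteristic $0$; concretely $b_Q(pk)=c_0$ for $r\nmid k$ and $b_Q(pk)=c_{1+v_q(k/r)}$ for $r\mid k$, with $c_0<c_1<c_2<\cdots$ (when $\beta=1$, i.e. $r=1$, the same formula holds with $c_0=e>1$ and jumps at successive powers of $q$). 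Writing this step function as $c_0\mathbf{1}+\sum_{i\ge1}(c_i-c_{i-1})\mathbf{1}_{\{rq^{i-1}\mid\,\cdot\,\}}$ and using $\mu\ast\mathbf{1}_{\{d\mid\,\cdot\,\}}=\mathbf{1}_{\{d\}}$, I obtain $b_Q^*(pN)=c_0$ if $N=1$, $b_Q^*(pN)=c_i-c_{i-1}>0$ if $N=rq^{i-1}$ with $i\ge1$, and $b_Q^*(pN)=0$ otherwise; in particular $b_Q^*(pN)\ge0$ always, and $b_Q^*(pN)\ge1$ exactly for $N\in\{1,r,rq,rq^2,\dots\}$. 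Since those exceptional $N>1$ must be coprime to $p$ to avoid the vanishing for $p\mid N$, this is where the hypothesis $p\nmid r$ in (ii) and (iii) is forced. Multiplying the exceptional $N$ by $m$ gives cases (i)--(iii) and part~(b); part~(a), combined with the analogous non-negativity at the points of $\Fix(h)$ coming from Lemma~\ref{bnqfix}, shows $\ord_Q(\Psi_{pN}^*)\ge0$ for every $Q\in\PP^1$, so $\Psi_{pN}^*$ is a polynomial, and dividing out the $\Fix(h)$-factors (which occur to the exact order $\delta_i=b_{P_i}^*(pN)\ge0$) leaves $\widetilde{\Psi_{pN}^*}\in K[x,y]$, defined over $K$ since $\phi$, $h$, and $\Fix(h)$ are.

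The step I expect to be the main obstacle is upgrading the pointwise assertions of Lemmas~\ref{an} and~\ref{bnqfix}, which fix $N$ and compare $b_Q(pN)$ only to $b_Q(p)$, to the full \emph{step-function description of $k\mapsto b_Q(pk)$} used in the inversion — in particular verifying that the level sets are exactly the nested sets $\{rq^{i}\mid\,\cdot\,\}$ and that the successive values $c_0<c_1<c_2<\cdots$ are \emph{strictly} increasing, so that the alternating sum $\sum_{k\mid N}\mu(N/k)\,b_Q(pk)$ is non-negative with the claimed support. A secondary but fiddly point is the re-indexing of the truncated sum $\sum_{k\mid N,\;pk\nmid N}$ under the reduction $\phi\mapsto\phi^m$, together with the careful treatment of the degenerate positions of $Q$ relative to $\Fix(h)$ and $\Fix(\phi)$ — notably a point of $\Fix(h)$ that $\phi$ interchanges with the other fixed point of $h$ — which Lemma~\ref{bnqfix} as stated does not literally cover.
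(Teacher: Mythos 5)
Your plan is correct and follows essentially the same strategy as the paper's proof, though with a cleaner organization that is worth noting.  Both you and the paper reduce to the map $\psi=\phi^m$ so that $Q$ has primitive $h$-period~$1$, both invoke Lemma~\ref{an} (and~\ref{bnqfix}) as the technical engine, and both finish with a truncated Möbius inversion.  The difference is the order and uniformity of the reductions.  The paper first splits on whether $p\mid N$, reducing the $p\mid N$ case to $\phi^{p^t}$ and then tracking the relation $m=m'p^t$ between the primitive $h$-periods of $\phi$ and $\phi^{p^t}$ before applying the $m'$-reduction; you perform the $m$-reduction uniformly for every $N$ (and it does go through for all $N$: the truncation $pk\nmid N$ becomes exactly $p\ell\nmid M$ after the substitution $k=m\ell$, $N=mM$), after which the $p\mid M$ case is dispatched in one line since every surviving $\ell$ has $p\mid\ell$ and so $b_Q(\psi,p\ell)=0$.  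This buys you a shorter argument with no $m$-versus-$m'$ bookkeeping.  Your second organizational choice — packaging the input to the Möbius inversion as an explicit step function $c_0\mathbf 1 + \sum_{i\ge1}(c_i-c_{i-1})\mathbf 1_{\{rq^{i-1}\mid\,\cdot\,\}}$ and using $\mu*\mathbf 1_{\{d\mid\,\cdot\,\}}=\mathbf 1_{\{d\}}$ — is a tidier way of expressing what the paper does by hand in its Cases Ia/Ib/II/III.  You correctly flag that establishing the step function (that $k\mapsto b_Q(pk)$ depends on $k$ only through the divisibility conditions $r\mid k$, $rq^{i}\mid k$, with strictly increasing plateau values) is where the real work lies; this is exactly the content of the paper's Cases~II and~III, which prove it by iterating the pass to $\psi^r$ and then to $\psi^{rq^i}$ and applying Lemma~\ref{an}(iii) at each stage.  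Two small cautions: your assertion that points $Q\in\Fix(h)$ "carry $m=\infty$ by convention" is not what the proposition's definition of $m$ literally says (a periodic $Q\in\Fix(h)$ has finite primitive $h$-period by Lemma~\ref{fplem}); the paper's proof simply restricts attention to $Q\notin\Fix(h)$, and you should do the same rather than rely on a convention that is not stated.  Second, be a little more careful with indexing when $r=1$: the $c_0$ level of the step function is then empty and the formula starts at $c_1=b_Q(\psi,p)=e>1$, which you note parenthetically but then relabel as $c_0$, creating a harmless but confusing index shift in the inversion.
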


\begin{proof}
If $\Psi_{pN}^*(x,y)$ is a polynomial, then from the definition $\widetilde{\Psi_{pN}^*}(x,y)$ will be a polynomial as well.  Further, if $Q\in \Fix(h)$, then $\widetilde{b_Q^*}(pN) =0$; otherwise $\widetilde{b_Q^*}(pN) = b_Q^*(pN)$.  Therefore, we prove that if $Q \notin \Fix(h)$, then $b_Q^*(pN)$ satisfies the statement of the proposition.

 From Lemma~\ref{msmall} we see that if $m \nmid N$, then $b_Q(pk) = 0$ for every $k \mid N$.  So also $b_Q^*(pN) = 0$.  We need only consider the case that $m \mid N$.
Now suppose that $p\nmid N$.  The condition that $pk \nmid N$ adds no information, so we may calculate
\begin{equation}
b_Q^*(pN) = \sum_{\substack{k \mid N\\ pk \nmid N}} \mu(N/k)b_Q(pk) = \sum_{k\mid N}\mu(N/k)b_Q(pk).\label{mk}
\end{equation}

For clarity we write $b_Q(\phi,pN)$ for $b_Q(pN)$ because we will be dealing with multiple rational maps.
  Let $\psi = \phi^m$ --- so $Q$ has primitive $h$-period~$1$ for $\psi$ --- and let $n = N/m$.  By the argument above, the only terms which contribute to the sum in equation \eqref{mk} are the ones where $m \mid k$, so 
\begin{align*}
b_Q^*(\phi, pN) &= \sum_{k\mid N}\mu(N/k)b_Q(pk) =
\sum_{k'\mid n}\mu(mn/mk')b_Q(\phi, pmk')\\
&= \sum_{k'\mid n}\mu(n/k')b_Q(\phi^m, pk')= \sum_{k'\mid n}\mu(n/k')b_Q(\psi, pk') \\
&= b_Q^*(\psi, pn)&& \text{since $p\nmid n$.}
\end{align*}
Since $Q$ has primitive $h$-period~$1$ for $\psi$ and $Q\notin \Fix(h)$, we may apply Lemma~\ref{an}.  For clarity of notation, we rename the automorphism so that $\psi(Q) = h(Q)$.

\smallskip
\noindent {\bf Case Ia \quad} $ \left(\frac{\lambda_1(\psi,Q) }{\lambda_1(h,Q)}\right)^{n}\neq 1$.

\smallskip
\noindent {\bf Case Ib \quad}  $ \lambda_1(\psi,Q) = \lambda_1(h,Q)$ and $q \nmid n$.

In both cases, we have $b_Q(\psi,p) = b_Q(\psi,pk)$ for all $k\mid n$.  So then
\begin{align*}
\sum_{k\mid n}\mu(n/k)b_Q(\psi, pk) 
&=  \sum_{k\mid n}\mu(n/k)b_Q(\psi, p) \\
&=\begin{cases}
b_Q(\psi,p) > 0 & \text{ if $n=1$ so that $N=m$}\\
0 & \text{ if $n>1$ so that $N>m$.}\\
\end{cases}
\end{align*} 
Since $b_Q(\psi,p) = b_Q(\phi^N,p) = b_Q(\phi, pN)$, we conclude that in these cases $b_Q^*(pN)=b_Q(pN) > 0 $ if and only if $m=N$.

\noindent{\bf Case II \quad}$ \lambda_1(\psi,Q) = \lambda_1(h,Q)$ and $q \mid n$.

\smallskip
Write $n=q^sM$ with $q\nmid M$. 
Since $\psi(Q) = h(Q)$, we know from the proof of Lemma~\ref{msmall} that 
$$
\psi^{q^i}(Q) = h^{q^i}(Q) = h^j(Q) 
\text{ for some } 0 \leq j \leq p-1.
$$
Since $p\neq q$ (we know $p\nmid n$ but $q \mid n$) we see that in fact $1 \leq j \leq p-1$.
   Then because $ \lambda_1(\psi,Q) = \lambda_1(h,Q)$, Lemma~\ref{creqns} says that also 
   $$
    \lambda_1\left(\psi^{q^i},Q\right) = \lambda_1\left(h^{q^i},Q\right) = \lambda_1\left(h^j,Q\right).
   $$

  If $k\mid M$, then necessarily $q \nmid k$, so we apply equation~\eqref{eqlambdas} to conclude that
$$
b_Q \left(\psi, p q^ik\right) = b_Q\left(\psi^{q^i}, pk\right) = b_Q\left(\psi^{q^i}, p\right) = b_Q\left(\psi, pq^i\right).
$$
We then compute
\begin{align*}
b_Q^*(\psi, pn) 
&=  \sum_{\substack{k \mid n\\ pk\nmid n}}  \mu(n/k)b_Q(\psi, pk) =  \sum_{k\mid n} \mu(n/k)b_Q(\psi, pk) &&\text{since $p\nmid n$}\displaybreak[0] \\
&=  \sum_{k\mid M}\sum_{i=0}^s \mu(q^sM/q^ik)b_Q(\psi, p q^i k) \displaybreak[0]\\
&=  \left(\sum_{k\mid M}\mu(M/k)\right)\left(\sum_{i=0}^s \mu(q^{s-i})b_Q(\psi, p q^i )\right)\displaybreak[0] \\
&=  \begin{cases}
b_Q(\psi, p q^s) -b_Q(\psi, p q^{s-1}) >0 &\text{if $M=1$}\\
0 &\text{if $M>1$.}
\end{cases}
\end{align*}

The fact that $b_Q^*(\psi, pn)>0$ when $M=1$ follows from applying equation~\eqref{eqlambdas} to the difference $b_Q\left(\psi^{q^{s-1}}, pq\right) - b_Q\left(\psi^{q^{s-1}}, p\right)$.  So we have shown that in this case, $b_Q^*(\phi, pN) \geq  0$, and that $b_Q^*(\phi, pN) > 0 $ if and only if $N = mq^s$.

\noindent{\bf Case III \quad } $ \left(\frac{\lambda_1(\psi,Q) }{\lambda_1(h,Q)}\right)^{n} = 1$.

\smallskip
Since $r$ is the exact order of $\frac{\lambda_1(\psi,Q)}{\lambda_1(h,Q)}$ in $\overline{K}^*$, we have $r \mid n$ (and further $r>1$ or we are in Case~Ib or Case~II).  If $r \nmid k$, then by Lemma~\ref{an}, $b_Q(\psi, pk) = b_Q(\psi, p)$.  So we may split the sum of $b_Q^*(\psi, pn)$ into two parts:
\begin{align*}
\sum_{k\mid n}\mu(n/k)b_Q(\psi, pk) 
&= \left(\sum_{\substack{k\mid n \\ r\nmid k}} + \sum_{\substack{k\mid n\\r\mid k}} \right)
\mu(n/k)b_Q(\psi, pk) \\
&= \left(\sum_{\substack{k\mid n \\ r\nmid k}} \mu(n/k)b_Q(\psi, p) \right) + 
 \left(\sum_{\substack{k\mid n \\ r\mid k}} \mu(n/k)b_Q(\psi, pk) \right)\\
 &= \sum_{k\mid n} \mu(n/k)b_Q(\psi, p)  + \sum_{\substack{k\mid n \\ r\mid k}} \mu(n/k)
 \left(b_Q(\psi, pk)-b_Q(\psi,p)\right).\\
 \intertext{ Since $n>1$ the first sum vanishes, so we have}
 \sum_{k\mid n}\mu(n/k)b_Q(\psi, pk) 
 &= \sum_{\substack{k\mid n \\ r\mid k}} \mu(n/k)
 \left(b_Q(\psi, pk)-b_Q(\psi, p)\right). \\
  \end{align*}

 Now if $k\mid n$ and $r\mid k$, then $k = rk'$ for some $k'$ a divisor of $n/r$.  So we can rewrite the final sum~as
 \begin{align*}
\sum_{\substack{k\mid n \\ r\mid k}} \mu(n/k) \left(b_Q(\psi, pk)-b_Q(\psi, p)\right) 
&= \sum_{k' \mid(n/r)}  \mu\left(\frac{n/r}{k'}\right) \left(b_Q(\psi, prk')-b_Q(\psi, p)\right) \displaybreak[0]\\
&= \sum_{k'\mid (n/r)}  \mu\left(\frac{n/r}{k'}\right) \left(b_Q(\psi^r, pk')-b_Q(\psi, p)\right)  \displaybreak[0]\\
&= b_Q ^*(\psi^r, pn/r) - 
\begin{cases}
b_Q(\psi, p) & \text{if $n=r$}\\
0 & \text{if $n> r$.}\\
\end{cases}
\end{align*}

If $n=r$, then
\begin{align*}
b_Q ^*(\phi, pN)
=
b_Q ^*(\psi, pn)
&=
b_Q ^*(\psi^r, p) -b_Q(\psi, p)\\
&=
b_Q(\psi^r, p) -b_Q(\psi, p)
=
b_Q(\psi, pr) -b_Q(\psi,p)
>0.
\end{align*}
And in this case,
\begin{align}
b_Q ^*(\phi, pN) 
&=
b_Q(\psi, pr) -b_Q(\psi, p)\nonumber \\
&=
b_Q(\phi^N, p) -b_Q(\phi^m, p). \label{rtunitcase}\displaybreak[0]\\
\intertext{ If $n>r$, then }
 b_Q ^*(\phi, pN)
&=
b_Q ^*(\psi, pn)
=
b_Q ^*(\psi^r, pn/r). \label{Nr}\displaybreak[0]\\
\intertext{Since $p \nmid r$, }
\psi^r(Q) &= h^r(Q) = h^j(Q) &&\text{ for some }1\leq j\leq p-1,\nonumber\displaybreak[0]\\
\intertext{ and as before we have}
 \frac{\lambda_1(\psi^r,Q)}{\lambda_1(h^j,Q)}
 &=
 \left(\frac{\lambda_1(\psi,Q)}{\lambda_1(h,Q)} \right)^r = 1.\nonumber
 \end{align}
  If $n\neq 0$ in $K$, we may apply Case Ib to $\psi^r$ and conclude that $b_Q ^*(\psi^r, pn/r) =0$ since $n/r >1$.    If $n = 0 $ in $K$, then we can apply Case II to $b_Q^*(\psi^r, pn/r)$ and again conclude that  
 $b_Q ^*(\psi^r, pn/r) =0$ unless $n/r = q^s$ for some $s \geq 1$.

In Case III, we therefore conclude that $b_Q ^*(pN) > 0$ if and only if $n=r$ or $n=q^sr$, so $N=mr$ or $N =mq^s r$.

We must now consider the case that $p \mid N$, so  $N=p^t N'$ where $t\geq 1$ and $p \nmid N'$.  Then the condition that $k \mid N$ but $pk \nmid N$ means that $k = p^t k'$ for some $k'$ that divides $N'$.  So we have:
\begin{align}
b_Q ^*(\phi, pN) & = \sum_{\substack{k \mid N\\ pk \nmid N}} \mu(N/k) b_Q(\phi, pk)=\sum_{k' \mid N'} \mu\left(p^tN'/p^tk'\right) b_Q\left(\phi,p \left(p^t k'\right)\right)\nonumber\\
&= \sum_{k' \mid N'} \mu(N'/k') b_Q\left(\phi^{p^t}, pk'\right)= b_Q ^*\left(\phi^{p^t},pN'\right) \quad\text{since $p \nmid N'$.}\label{pdivncomp}
\end{align}

Suppose that $Q$ has primitive $h$-period $m'$ for $\phi^{p^t}$ and primitive $h$-period $m$ for $\phi$.  We know that the primitive $h$-period for $\phi$  must divide $p^t m'$.  That means $m = p^{t'} m''$ with $0\leq t' \leq t$ and $m'' \mid m'$.  If $t'<t$, then $\phi^{pm}(Q)=Q \neq h(Q)$. Hence $t'=t$ and also $m'' =m'$ by minimality of $m'$.  So in fact  $m = m'p^t$.

We now apply the results above to the map $\phi^{p^t}$.     We conclude that  $b_Q ^*(\phi, pN) \geq 0$ for all $Q\in \PP^1$ and $b_Q ^*(\phi, pN) > 0$ if and only if one of the following conditions hold.
\begin{enumerate}
\item
 $Q$ has primitive $h$-period $N'$ for $\phi^{p^t}$.   So by the argument above, $Q$ has primitive $h$-period  $N=p^tN'$ for $\phi$.

\item
$Q$ has $h$-period $m'$ for  $\phi^{p^t}$, and  $N'=m'r$.   In this case, $N = p^t N' = p^t m' r$.  By the argument above, $Q$ has primitive $h$-period $m=p^t m'$ for $\phi$.  So we have $N=mr$.  Note that $r \mid N'$ so $p \nmid r$.

\item
$Q$ has $h$-period $m'$ for  $\phi^{p^t}$,  and $N'=m'rq^s$.   In this case, $N = p^t N' q^s = p^t m' r q^s$.  So with $m=p^tm'$ we have $N=mrq^s$, and as above, $p \nmid r$.\qedhere
\end{enumerate}
\end{proof}

Proposition~\ref{polyprop} says that the $h$-tuned dynamotic polynomials are indeed polynomials, justifying the name.  We now show that they divide the associated dynatomic polynomials.

\begin{prop}\label{psipnpoly}
For every $N\geq 1$,
$\widetilde{\Psi_{pN}^*} \mid \Phi_{pN}^*$.
\end{prop}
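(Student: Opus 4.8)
The plan is to prove the stronger pointwise statement that $\ord_Q\widetilde{\Psi_{pN}^*}\le\ord_Q\Phi_{pN}^*$ for every $Q\in\PP^1$; since both are honest polynomials (the first by Proposition~\ref{polyprop}, the second by the Morton--Silverman theory recalled in Section~\ref{prelim}) and the graded ring $\overline K[x,y]$ is a UFD whose irreducibles are exactly the linear forms attached to points of $\PP^1$, this local comparison is equivalent to the claimed divisibility. In the notation of Definitions~\ref{apdef2} and~\ref{apdef} I must show $\widetilde{b_Q^*}(pN)\le a_Q^*(pN)$ for all $Q$. If $Q\in\Fix(h)$ the left-hand side is $0$ by definition of $\widetilde{\Psi_{pN}^*}$, and if $Q\notin\Per(\phi)$ then both sides vanish; so the content lies in the case $Q\notin\Fix(h)$ with $Q$ periodic, where $\widetilde{b_Q^*}(pN)=b_Q^*(pN)$. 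Finally, exactly as in the proof of Proposition~\ref{polyprop} (equation~\eqref{pdivncomp} and the paragraph after it), replacing $\phi$ by $\phi^{p^t}$ reduces everything to the case $p\nmid N$, using on the $\Phi^*$ side the analogous identity $\Phi_{p^{t+1}N',\phi}^*=\Phi_{pN',\phi^{p^t}}^*$ for $p\nmid N'$ (Möbius inversion together with $\Phi_{k,\phi^{p^t}}=\Phi_{p^tk,\phi}$); so I assume $p\nmid N$.

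Fix such a $Q$ and let $m$ be its primitive $h$-period for $\phi$; renaming $h$ as in the proofs of Lemmas~\ref{msmall} and~\ref{an}, assume $\phi^m(Q)=h(Q)$. The first ingredient is an orbit-structure computation: the primitive $\phi$-period of $Q$ is exactly $m_0=pm$, and the cycle multiplier satisfies $(\phi^{m_0})'(Q)=\big(\lambda_1(\phi^m,Q)/\lambda_1(h,Q)\big)^p$ by equation~\eqref{phiprod} of Lemma~\ref{derivrel2}, so with $r$ the multiplicative order of $\lambda_1(\phi^m,Q)/\lambda_1(h,Q)$ as in Proposition~\ref{polyprop}, the order $r_0$ of $(\phi^{m_0})'(Q)$ equals $r/\gcd(r,p)$. (For the period: $h\in\Aut(\phi)$ makes $h$ permute $\mathcal O_\phi(Q)$, and it acts there with order $p$ while sending $Q$ to $\phi^m(Q)$, giving $p=m_0/\gcd(m,m_0)$; identifying $\phi^{\gcd(m,m_0)}(Q)$ inside $\mathcal O_h(Q)$ exhibits $\gcd(m,m_0)$ as an $h$-period, forcing $m=\gcd(m,m_0)$.) Consequently, by Proposition~\ref{polyprop}, $\widetilde{b_Q^*}(pN)>0$ only for $N\in\{m,\,mr,\,mrq^s\ (s\ge1)\}$ with $p\nmid r$ in the latter two, and in each case $pN\in\{m_0,\,m_0r_0,\,m_0r_0q^s\}$ (using $m_0=pm$ and $r_0=r$ whenever $p\nmid r$), so Proposition~\ref{fromdynunit} already forces $a_Q^*(pN)>0$: the zero loci match.

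It remains to compare multiplicities, and this is the crux. The key tool is a propagation estimate: writing $\phi^{pk}(z)-z=\phi^{pk}(z)-h^p(z)$ and telescoping through the maps $\phi^{jk}\circ h^{p-j}$, each summand is, after the substitution $w=h^{p-1-j}(z)$, of the form $\phi^{jk}(\phi^k(w))-\phi^{jk}(h(w))$ and hence vanishes at $Q$ to order at least $\ord_{Q'}(\phi^k(w)-h(w))$ for a suitable $Q'\in\mathcal O_h(Q)$; by Lemma~\ref{creqns} the first discrepant Taylor coefficient of $\phi^k$ against $h$ is the same at every point of $\mathcal O_h(Q)$, so all of these orders equal $\ord_Q(\phi^k(z)-h(z))$. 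This yields $a_Q(\phi,pk)\ge b_Q(pk)$ for every $k$ with $\phi^k(Q)\in\mathcal O_h(Q)$ (and trivially otherwise, since then $b_Q(pk)=0$). Feeding the exact values of $b_Q(pk)$ from Lemma~\ref{an} and of $a_Q(pk)$ from Lemma~3.4 of~\cite{dynunit} into the Möbius definitions, and matching the three cases of Proposition~\ref{polyprop} against the corresponding telescoped differences appearing via Proposition~\ref{fromdynunit} (e.g.\ $\widetilde{b_Q^*}(pmr)=b_Q(\phi^{mr},p)-b_Q(\phi^m,p)$ against $a_Q^*(m_0r_0)=a_Q(m_0r_0)-a_Q(m_0)$), the propagation estimate upgrades to $b_Q^*(pN)\le a_Q^*(pN)$. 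The main obstacle is precisely this bookkeeping: because Möbius sums do not respect inequalities one cannot simply compare $b_Q(pk)$ with $a_Q(\phi,pk)$ term by term, so the rationally-indifferent case and the $q\mid N$ case must be treated separately, keeping the $h$-data $(m,r)$ of $\phi$ synchronized with the periodic-point data $(m_0,r_0)$ throughout.
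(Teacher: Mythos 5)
Your reduction to the pointwise statement $\widetilde{b_Q^*}(pN)\le a_Q^*(pN)$, the dispatch of $Q\in\Fix(h)$ and of non-periodic $Q$, and the reduction to $p\nmid N$ via $\Phi_{p^{t+1}N',\phi}^*=\Phi_{pN',\phi^{p^t}}^*$ all follow the paper's strategy. Your computation that the primitive $\phi$-period of $Q$ is $m_0=pm$ is a nice fill-in of a step the paper only asserts in its Case~I, and your telescoping decomposition
\[
\phi^{pk}(z)-z \;=\; \sum_{j=0}^{p-1}\bigl(\phi^{(j+1)k}h^{p-1-j}(z)-\phi^{jk}h^{p-j}(z)\bigr)
\]
is a genuinely different and appealing route to the inequality $a_Q(pk)\ge b_Q(pk)$, where the paper instead invokes Lemma~\ref{creqns} directly on Taylor coefficients. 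That inequality is correct for the reasons you give (invertibility of $h^{p-1-j}$, nonvanishing of the cycle multiplier, and the common discrepancy index across $\mathcal O_h(Q)$ from Lemma~\ref{creqns}).

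However, there is a real gap in the multiplicity comparison, precisely at the place you flag. You correctly observe that M\"obius differences do not respect inequalities, but you do not actually close this. In Case~III of Proposition~\ref{polyprop} one has, writing $\psi=\phi^{mrq^{s-1}}$,
\[
b_Q^*(pN)=b_Q(pmrq^s)-b_Q(pmrq^{s-1}),\qquad a_Q^*(pN)=a_Q(pmrq^s)-a_Q(pmrq^{s-1}),
\]
and the propagation estimate $a_Q(pk)\ge b_Q(pk)$ applied to both terms does not control the difference. What is needed is the \emph{exact} equality $a_Q(pmrq^{s-1})=b_Q(pmrq^{s-1})$ (and likewise at the larger index). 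The paper obtains this via Lemma~\ref{phij-hj}: it computes the leading discrepant coefficient $\lambda_e(\psi^p,Q)=p\bigl(\lambda_e(\psi,Q)-\lambda_e(h^j,Q)\bigr)$ and uses $q\ne p$ to conclude it is nonzero, hence $a_Q(\psi^p,1)=e$ exactly. Your telescoping gives each summand order $\ge e$ but leaves open the possibility that the $p$ leading coefficients cancel, so it cannot by itself deliver equality; the content of Lemma~\ref{phij-hj} is exactly that this cancellation does not occur (the sum of leading coefficients is $p$ times a nonzero quantity). Similarly in Case~II you need the clean equality $a_Q(pm)=b_Q(pm)=1$ — this one is easy from the order-$r>1$ hypothesis, but you should say so explicitly rather than gesture at ``feeding exact values.'' To complete your argument you should either invoke Lemma~\ref{phij-hj} directly in Case~III, or refine your telescoping to track the leading coefficient of each summand and show the sum is $p\cdot(\lambda_e(\psi,Q)-\lambda_e(h^j,Q))\ne 0$; as written, the proposal identifies but does not resolve the main difficulty.
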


\begin{proof}
To show that $\widetilde{\Psi_{pN}^*} \mid \Phi_{pN}^*$, we must show that $a_Q^*(pN) \geq \widetilde{b_Q^*}(pN)$ for all $Q\in\PP^1$.   If $Q\in \Fix(h)$, then $\widetilde{b_Q^*}(pN) = 0$ for all $N$, and the result is immediate, so we assume $Q \notin \Fix(h)$, in which case  $\widetilde{b_Q^*}(pN) = b_Q^*(pN)$.    We need only consider the case $b_Q^*(pN)>0$; Proposition~\ref{polyprop} describes the three ways this can happen.  Throughout, we assume that $b_Q^*(pN)>0$ and that $Q$ has primitive $h$-period $m$ for some $m \mid N$, and we rename the automorphism so that $\phi^m(Q) = h(Q)$.

\noindent{\bf Case I \quad}
 $N=m$.  

Since $\phi^N(Q) = h(Q)$ and $N$ is the smallest such integer,  we may conclude 
 $\phi^{pN}(Q) = Q$, and in fact $pN $ is the
primitive period of $Q$.
Since $b_Q( pk) = 0$ for $k<N$, we have  $b_Q^*(pN) = b_Q(pN)$.  
Similarly, since $pN $ is the primitive period of $Q$, $a_Q(k) = 0$ for $k< pN $; therefore  $a_Q^*(pN)= a_Q(pN)\geq 1$ by Lemma~\ref{fromdynunit}.  If $b_Q(pN)=1$, we are done.

Otherwise, $b_Q(pN)=e>1$ where  $e$ is the smallest positive integer such that $\lambda_e(\phi^N,Q) \neq \lambda_e(h,Q)$.  By Lemma~\ref{creqns}, then, $\lambda_1(\phi^{pN},Q) = 1$ and $\lambda_i(\phi^{pN},Q) = 0$ for $1<i<e$.  This says that $a_Q( pN)  \geq e$, and we are done in this case.

  \noindent{\bf Case II \quad}
$N=mr$ with $r>1$, $p\nmid r$, and $\frac{\lambda_1(\phi^m,Q)}{\lambda_1(h,Q)}$ a primitive $r^{\text{th}}$ root of unity.

As above, since $Q$ has primitive $h$-period $m$ for $\phi$,  we know that $Q$ has primitive period $pm$ for~$\phi$.  Also, since $\frac{\lambda_1(\phi^m,Q)}{\lambda_1(h,Q)}$ is a primitive $r^{\text{th}}$ root of unity and $p\nmid r$, we have by equation~\eqref{phip}
$$
\lambda_1(\phi^{pm},Q) =  \left(\frac{\lambda_1(\phi^{m},Q)}{\lambda_1(h,Q)}\right)^p
$$
is also a primitive $r^{\text{th}}$ root of unity.

From Proposition~\ref{polyprop} (see equation~\eqref{rtunitcase}), we know that 
$$
b_Q^*(pN) = b_Q(\phi^N, p) - b_Q(\phi^m, p).
$$
  A similar proof in~\cite{dynunit} shows that, since 
$\lambda_1(\phi^{pm},Q) $ is a primitive $r^{\text{th}}$ root of unity,
$$
a_Q^*(pN) = a_Q(\phi^{pN},1) - a_Q(\phi^{pm},1).
$$

Since $\lambda_1(\phi^{pm},Q) \neq 1$ and $\lambda_1(\phi^{m},Q) \neq \lambda_1(h,Q)$, we conclude that $b_Q(\phi^m, p) = a_Q(\phi^{pm},1) = 1$.   It remains to show that if $b_Q(\phi^N, p) =e > 1$, then $a_Q(\phi^{pN},1)  \geq e $, but this follows immediately from  Lemma~\ref{creqns}, exactly as in Case I. Summarizing, we have
 \begin{align*}
a_Q^*(pN) 
&=a_Q(pN) - a_Q(pm) = a_Q(pN) - 1\\
&\geq b_Q(pN) - 1 = b_Q(pN) - b_Q(pm) =b_Q^*(pN).
\end{align*}

  \noindent{\bf Case III \quad}
$N=mrq^s$ with  $\frac{\lambda_1(\phi^m,Q)}{\lambda_1(h^j,Q)}$ a primitve $r^{\text{th}}$ root of unity, and $ K $ has characteristic $q$.  

Once again, the primitive period of $Q$ is $pm$. 
By equation~\eqref{Nr}, we see that 
\begin{align}
b_Q^*(pN) &= b_Q^*(\phi^{mr},p q^s) =b_Q(\phi^{mr},p q^s) -b_Q(\phi^{mr},p q^{s-1})\nonumber \\
&=b_Q(pmrq^s) - b_Q(pmrq^{s-1}),\label{bqcalc}\\
\intertext{and in~\cite{dynunit} a similar argument shows that}
a_Q^*(pN) &= a_Q(pmrq^s) - a_Q(pmrq^{s-1})\label{aqcalc}.
\end{align}

From Proposition~\ref{polyprop} and Lemma~\ref{an}, we conclude that $b_Q(\phi^{mr}, p q^{s-1})=e>1$ and $b_Q(\phi^{mr}, p q^{s}) = f>e>1$.  For ease of notation, we let $\psi = \phi^{mrq^{s-1}}$.  Then we have $\psi(Q) = h^j(Q) $ for some $j$, and furthermore since $e>1$, 
\begin{align}
\lambda_i(\psi,Q) &= \lambda_i(h^j,Q) && \text{ for $1 \leq i < e$, and}
\label{iseq}\\
\lambda_e(\psi,Q) & \neq \lambda_e(h^j,Q). \label{esneq}
\end{align}
Note that $\lambda_e\left((h^j)^p,Q\right) =0$ since $e >1$, so by Lemma~\ref{phij-hj} we have
\begin{align*}
\lambda_e(\phi^{pmrq^{s-1}},Q) &= \lambda_e(\psi^p,Q)
= \lambda_e(\psi^{p},Q)-\lambda_e\left((h^j)^p, Q\right)\\
&= p\left(\prod_{i=1}^{p-1}\lambda_1(\psi,Q_i) \right)(\lambda_e(\psi,Q)-\lambda_e(h^j,Q))\\
&= p \left(\frac{\lambda_1(\psi,Q)}{\lambda_1(h^j,Q)}\right)^p(\lambda_e(\psi,Q)-\lambda_e(h^j,Q))
&&\text{by Lemma~\ref{derivrel2}}\\
&= p (\lambda_e(\psi,Q)-\lambda_e(h^j,Q)) &&\text{by equation~\eqref{iseq}.}\\
\end{align*}
 This can never vanish by equation~\eqref{esneq} and the fact that  $q\neq p$.  Therefore 
\[
a_Q(pmrq^{s-1}) = b_Q(pmrq^{s-1}) = e.
\]
The exact same argument applied to $\psi = \phi^{pmrq^s}$ shows that 
\[
a_Q(pmrq^{s}) = b_Q(pmrq^{s}) = f.
\]
Equations~\eqref{bqcalc} and~\eqref{aqcalc}, then, show that 
$a_Q^*(pN) = b_Q^*(pN)$ in this case. 
\end{proof}

In order to prove that the dynatomic polynomials $\Phi_{pN}^*$ are reducible for infinitely many~$N$, we must be sure that the factors $\widetilde{\Psi_{pN}^*}$ are nontrivial.  The example at the end of Section~\ref{autmapsprelim} shows that this is  not always the case. First, we show that $\deg \widetilde{\Psi_{pN}^*} < \deg \Phi_{pN}^*$ for suitable choice of $N$.   We require one additional Lemma.

\begin{lemma}\label{mucalc}
For $n>1$,  $a\geq 2$ and $p\geq 2$, 
$$\sum_{k\mid n}\mu(n/k)a^{pk} > p \sum_{k\mid n} \mu(n/k)a^k$$
\end{lemma}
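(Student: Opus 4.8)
The approach is combinatorial: I would read both sides of the inequality as counts of aperiodic words. For an alphabet of size $b$ and any $n\ge1$ put $L(n,b)=\sum_{k\mid n}\mu(n/k)\,b^{k}$. A standard M\"obius inversion — applied to $b^{n}=\sum_{d\mid n}f(d)$, where $f(d)$ is the number of \emph{primitive} words of length $d$ (those not of the form $u^{d/e}$ for a proper divisor $e\mid d$), using that every length-$n$ word is uniquely a power of a primitive word of length dividing $n$ — shows that $L(n,b)$ is exactly the number of length-$n$ words over a size-$b$ alphabet having no period $d\mid n$ with $d<n$. Since the left-hand side of the statement is $L(n,a^{p})$ and the right-hand side is $p\cdot L(n,a)$, it suffices to prove
\[
L(n,a^{p})\;>\;p\cdot L(n,a).
\]

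Next I would set up the dictionary between the two alphabets. Fix $A$ with $\lvert A\rvert=a$ and write $B=A^{p}$, so $\lvert B\rvert=a^{p}$; reading off coordinates gives a bijection between length-$n$ words over $B$ and $p$-tuples $(w_{1},\dots,w_{p})$ of length-$n$ words over $A$, under which a word over $B$ has period $d$ precisely when every $w_{i}$ has period $d$. Here I only use that ``$w$ has period $d\mid n$'' means literally ``$w=u^{n/d}$'', so no Fine--Wilf input is needed. Thus $L(n,a^{p})$ counts the $p$-tuples $(w_{1},\dots,w_{p})$ admitting no common period $d\mid n$ with $d<n$. Call a length-$n$ word over $A$ \emph{aperiodic} if it has no period $d\mid n$ with $d<n$; then $L(n,a)$ is the number of aperiodic words, and $L(n,a)\ge1$ since $0^{n-1}1$ is aperiodic (a valid word as $a\ge2$) and $\ne0^{n}$ because $n>1$. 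Now I exhibit $p\cdot L(n,a)+1$ admissible $p$-tuples. For each slot $i\in\{1,\dots,p\}$ and each aperiodic $w$, form the tuple equal to the constant word $0^{n}$ in every slot but the $i$-th, where it equals $w$; a common period $d\mid n$, $d<n$ would be such a period of $w$, contradicting aperiodicity, so the tuple is counted by $L(n,a^{p})$, and these $p\cdot L(n,a)$ tuples are pairwise distinct since the unique non-constant slot recovers $i$ and its entry recovers $w$. Finally, the tuple equal to $0^{n-1}1$ in slots $1$ and $2$ (legitimate since $p\ge2$) and to $0^{n}$ elsewhere is also counted by $L(n,a^{p})$ but differs from all of the above, having two non-constant slots. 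Hence $L(n,a^{p})\ge p\cdot L(n,a)+1>p\cdot L(n,a)$.

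I expect the only delicate point — the ``main obstacle'' — to be the bookkeeping: one must invoke the word-counting identity for $L(n,b)$ in its \emph{period} form rather than its Lyndon-word form, so that the passage to $p$-tuples via common periods is transparent and manifestly free of the Fine--Wilf theorem; once that is pinned down, constructing the extra tuples and verifying distinctness is routine.
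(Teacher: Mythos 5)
Your proof is correct, and it takes a genuinely different route from the paper's. The paper treats the inequality purely algebraically: it sets $h=(f-g)*\mu$ with $f(n)=a^{pn}$, $g(n)=pa^n$, and runs an induction on $n$ with explicit geometric-series estimates to show $0<h(n)<a^{pn}$ for $n>1$. You instead recognize $\sum_{k\mid n}\mu(n/k)\,b^k$ as the count $L(n,b)$ of primitive (aperiodic) words of length $n$ over a $b$-letter alphabet, identify a word over an alphabet of size $a^p$ with a $p$-tuple of words over an alphabet of size $a$, observe that periods pass through this identification componentwise, and then exhibit $p\,L(n,a)+1$ pairwise distinct primitive words over the larger alphabet by hand (the tuples with one aperiodic slot among constant slots, plus one extra tuple with two non-constant slots). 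The key checks — that aperiodic words over a $\ge 2$-letter alphabet are nonconstant for $n>1$, and that $0^{n-1}1$ is aperiodic — are both verified and both use the hypotheses $a\ge2$, $n>1$, and $p\ge2$ in exactly the right places. What your approach buys is a transparent one-line explanation of why the inequality holds and a slightly sharper conclusion (a gap of at least $1$, in fact much more); what the paper's buys is self-containedness (no appeal to the word-counting interpretation) and, incidentally, the companion bound $h(n)<a^{pn}$ which its induction needs. Both are valid; yours is shorter and more conceptual once the combinatorial lemma is admitted.
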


\begin{proof}
Let $f(n) = a^{pn}$ and $g(n) = p a^n$, and define $h = (f-g)*\mu$, where $*$ represents convolution in the usual number-theoretic sense.  The statement of the lemma is equivalent to $h(n) > 0$ for all $n>1$.  By properties of convolution (see~\cite{sjmbook} for example), $h*1 = ((f-g)*\mu)*1 = f-g$.  In other words, 
\[
\sum_{k\mid n} h(k) = a^{pn} - p a^n.
\]

We will show by induction that  $h(n) < a^{pn}$ and $h(n) > 0$ for all $n>1$.  Since $a\geq 2$ and $p\geq 2$,
\[
h(1) = a^p - pa < a^p \quad\text{and}\quad
 h(1)= a(a^{p-1}-p)\geq 0.
 \]
In fact, $h(1) = 0 $ if and only if $a=p=2$.
Now consider a prime $q$,
\begin{align*}
h(q) &= a^{pq} - pa^q - h(1) < a^{pq} && \text{since $pa^q > 0 $ and $h(1) \geq 0$.}
\end{align*}
But also
\begin{align*}
h(q) 
&= a^{pq} -pa^q -a^p +pa = a^p(a^{p(q-1)} -1) -pa(a^{q-1}-1)\\
& \geq a^p(a^{q-1}-1)(a^{q-1}+1) - pa(a^{q-1}-1) \quad \text{since $p\geq 2$}\\
& = (a^{q-1}-1)(a^{p+q-1}+a^p-pa) >0 \qquad\quad\text{since $a^p\geq pa$ and $q\geq 2$.}
\end{align*}

Now suppose that for all $k<m$,  we have $h(k) < a^{pk}$ and also that $h(k) > 0$ if $k>1$.  If $m$ is prime, the result holds by the argument above.  Assume then that $m$ is composite (so clearly $m>3$).  For all $k\mid m$, if $k\neq m$, we have $h(k) \geq 0$ by the induction hypothesis, so
$$
h(m) = a^{pm} - pa^m - \sum_{\substack{k\mid m\\k\neq m}}h(k) < a^{pm}.
$$

Also by the induction hypothesis, $h(k) < a^{pk}$ for each $k$ in the sum above.  Further, the largest divisor of $m$ is at most $m-2$ since $m\geq 4$.  Thus, 
$$
\sum_{\substack{k\mid m\\k\neq m}}h(k) 
< a^{p(m-2)} + \cdots +a^p
=
\frac{a^{p(m-1)}-a^p}{a^p-1}.
$$
Using this rough estimate, we find
\begin{align*}
h(m) 
&> a^{pm} -pa^m - \frac{a^{p(m-1)}-a^p}{a^p-1}\\
&= \frac{a^{p(m+1)}-a^{pm}-a^{p(m-1)} +a^p}{a^p-1}-pa^m\\
&= \frac{a^{p(m-1)}\left(a^{2p}-1\right)-a^{pm} + a^p }{a^p-1}-pa^m\\
&= a^{p(m-1)}\left(a^p+1\right) - \frac{a^{pm} - a^p }{a^p-1} - pa^m\\
&> a^{pm} + a^{p(m-1)} - a^{pm} + a^p - pa^m && \text{since the denominator is $> 1$}\\
&= a^{p(m-1)}+a^p -pa^m > 0. &&\qedhere
\end{align*}
\end{proof}

\begin{prop}\label{degarg}
Assume the hypotheses in Theorem~\ref{mainthmred}, and let $\deg\phi = d$.
\begin{enumerate}[\textup(a\textup)]
\item
If $d >2$, then
$\deg\widetilde{\Psi_{pN}^*} < \deg \Phi_{pN}^* $ for all $N\geq 1$. 
\item
If $d =2$, then
$\deg \widetilde{\Psi_{pN}^*}  < \deg \Phi_{pN}^* $ for all $N> 1$. 
\end{enumerate}
\end{prop}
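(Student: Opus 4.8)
The plan is to compute $\deg\widetilde{\Psi_{pN}^{*}}$ and $\deg\Phi_{pN}^{*}$ explicitly and then reduce everything to Lemma~\ref{mucalc} together with one short elementary estimate, which turns out to be the only place where $d=2$ genuinely differs from $d>2$. First I would record that $\deg\Psi_{pk,\phi,h}=(p-1)(d^{k}+1)$: writing $\phi^{k}=[F_{k}:G_{k}]$ with $\deg F_{k}=\deg G_{k}=d^{k}$ and each $h^{j}$ linear, each of the $p-1$ factors $\phi^{k}(x,y)-h^{j}(x,y)$ of $\Psi_{pk,\phi,h}$ is a form of degree $d^{k}+1$ (the term $-h^{j}$ supplying the $+1$); the leading forms do not cancel, by a computation in the spirit of Lemma~\ref{leadx}.

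Next, write $N=p^{t}N'$ with $p\nmid N'$ and put $a:=d^{p^{t}}\ge 2$. The divisors $k$ of $N$ with $pk\nmid N$ are exactly those of the shape $k=p^{t}k'$ with $k'\mid N'$, and then $\mu(N/k)=\mu(N'/k')$; hence
\[
\deg\Psi_{pN}^{*}=(p-1)\sum_{k'\mid N'}\mu(N'/k')\bigl(a^{k'}+1\bigr)
=\begin{cases}(p-1)\,\nu_{a}(N'),& N'>1,\\ (p-1)(a+1),& N'=1,\end{cases}
\]
using $\sum_{k'\mid N'}\mu(N'/k')=0$ for $N'>1$. Since $\widetilde{\Psi_{pN}^{*}}$ is a polynomial (Proposition~\ref{polyprop}) obtained from $\Psi_{pN}^{*}$ by dividing out a product of powers of linear forms, $\deg\widetilde{\Psi_{pN}^{*}}\le\deg\Psi_{pN}^{*}$, so it suffices to compare $\deg\Psi_{pN}^{*}$ with $\deg\Phi_{pN}^{*}=\nu_{d}(pN)$. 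A parallel count for $pN=p^{t+1}N'$ shows that among the divisors $p^{b}k'$ of $p^{t+1}N'$ only $b=t$ and $b=t+1$ carry nonzero Möbius weight, which gives
\[
\nu_{d}(pN)=\sum_{k'\mid N'}\mu(N'/k')\bigl(a^{pk'}-a^{k'}\bigr)=\Bigl(\sum_{k'\mid N'}\mu(N'/k')a^{pk'}\Bigr)-\nu_{a}(N'),
\]
valid for every $N'\ge 1$ with the convention $\nu_{a}(1):=a$.

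Now the comparison. Suppose first $N'>1$. Lemma~\ref{mucalc}, applied with base $a\ge 2$ and $n=N'>1$, gives $\sum_{k'\mid N'}\mu(N'/k')a^{pk'}>p\,\nu_{a}(N')$, whence $\nu_{d}(pN)>(p-1)\nu_{a}(N')=\deg\Psi_{pN}^{*}\ge\deg\widetilde{\Psi_{pN}^{*}}$; this settles every $N$ with $N'>1$, for all $d\ge 2$. If instead $N'=1$, i.e.\ $N=p^{t}$, what is left is $(p-1)(a+1)<a^{p}-a$, equivalently $a^{p}-pa-p+1>0$. When $t\ge 1$ we have $a=d^{p^{t}}\ge 4$, so $a^{p-1}\ge 4^{p-1}\ge 2p$ for $p\ge 2$, giving $a^{p}\ge 2pa$ and $a^{p}-pa-p+1\ge p(a-1)+1>0$. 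When $t=0$ and $d\ge 3$ the same estimate applies after handling $p=2$ directly (where $d^{2}-2d-1=(d-1)^{2}-2\ge 2$) and using $3^{p-1}\ge 2p$ for $p\ge 3$. The only configuration left untreated, $t=0$, $N'=1$, $d=2$, is precisely the case $N=1$ excluded in part~(b) — where indeed $\widetilde{\Psi_{2,\phi,h}^{*}}$ and $\Phi_{2,\phi,h}^{*}$ can have the same degree. Collecting the cases proves part~(a) for all $N\ge 1$ and part~(b) for all $N>1$.

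The only real obstacle is the divisor bookkeeping in the middle step: one must track carefully which divisors of $N$ (resp.\ $pN$) survive the Möbius combination and the restriction $pk\nmid N$ in the definitions of $\Psi_{pN}^{*}$ and $\Phi_{pN}^{*}$ when $p\mid N$, and then recognize each surviving sum as $\nu_{a}(\cdot)$ for the correct base $a=d^{p^{t}}$, so that Lemma~\ref{mucalc} applies verbatim. Everything else is Lemma~\ref{mucalc} plus one short estimate, and that estimate is exactly what forces the restriction $N>1$ in the degree-two case.
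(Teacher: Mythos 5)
Your proof is correct and follows essentially the same route as the paper: express both degrees as M\"obius sums, reduce the main comparison to Lemma~\ref{mucalc}, and handle the leftover $N'=1$ case by a direct elementary estimate. The one organizational improvement is that by writing $N=p^{t}N'$ and setting $a=d^{p^{t}}$ from the outset you merge the paper's two parallel cases ($p\nmid N$ and $p\mid N$) into one, and your explicit bound $a^{p}-pa-p+1>0$ for $N'=1$, $t\geq 1$ is actually a bit more careful than the inequality chain the paper displays there (which, as written, only directly verifies the $p=2$ subcase).
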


\begin{proof}
From definition~\ref{psidef}, we see that $\deg \widetilde{\Psi_{pN}^*}  \leq \deg \Psi_{pN}^*  $ for every $N$.  So we prove now that
$\deg \Psi_{pN}^*  < \deg\Phi_{pN}^* $ for $N>1$.
Consider first the case that $p\nmid N$, so if $pk\nmid N$ for all $k$. Then
\begin{align}
\deg \Psi_{pN}^* 
&= \sum_{\substack{k \mid N\\ pk \nmid N}} \mu(N/k)(p-1)\left(d^k+1\right)
= \sum_{k \mid N} \mu(N/k)(p-1)\left(d^k+1\right)\nonumber\\
&=
\begin{cases}
(p-1)\left(d+1\right) & \text{ if $N=1$}\\
(p-1)\sum_{k \mid n} \mu(N/k) d^k& \text{ if $N>1$}. \label{degpsin}
\end{cases} 
\end{align}

  Note that if $k \mid pN $ then either $k \mid N$ or $k = pk'$ for some $k' \mid N$.  So
\begin{align*} 
\deg \Phi_{pN}^* &=
 \sum_{k \mid pN} \mu(pN/k)\left(d^k + 1\right)\\
&=
 \sum_{k \mid N} \mu(pN/k)d^k + \sum_{k \mid N} \mu(pN/pk)d^{pk} +\sum_{k \mid pN} \mu(pN/k).
\end{align*}

Since by hypothesis $p\nmid N$, we know that $\gcd(p,k)=1$ for $k$ any divisor of~$N$.   Therefore $\mu(pN/k) = \mu(p)\mu(N/k) = -\mu(N/k)$.  Also, $pN >1$ so the final term vanishes.
\begin{equation}\label{phi2n}
\deg \Phi_{pN}^* =  -\sum_{k \mid N} \mu(N/k)d^k + \sum_{k \mid N} \mu(N/k)d^{pk}
\end{equation}

Comparing equations~\eqref{degpsin} and~\eqref{phi2n}, we see that  $\deg\Psi_{pN}^* < \deg \Phi_{pN}^*$ when $N>1$ follows from the fact that $\sum_{k \mid N} \mu(N/k)d^{pk} > p\sum_{k \mid N} \mu(N/k)d^k $ for all $N>1$, from Lemma~\ref{mucalc}.

In the $N=1$ case, we wish to show that $d^p + 1 - p(d + 1) >0 $ when $d >2$.
 From equations~\eqref{degpsin} and~\eqref{phi2n}, we have
\[
\deg\Phi_p^* -\deg \Psi_p^*= 
\left(d^{p} - d \right) - (p-1)(d+1)
= d^p - pd - (p-1),
\]
which is increasing with $d$, so it will be positive for all $d >2$ if it is positive when $d=3$.
In that case, we calculate
\[
\deg\Phi_p^* -\deg \Psi_p^*
= 3^p - 3p - (p-1),
\]
which is increasing with $p$.  We check that for $p=2$ we have $9- 6 -1=2>0$.

Now if $p\mid N$, then we have $N=p^t N'$ where $t\geq 1$ and $p \nmid N'$.  The condition that $k \mid N$ but $pk \nmid N$ means that $k = p^t k'$ for some $k' \mid N'$.  So we have:
\begin{align}
\deg \Psi_{pN}^* & = \sum_{\substack{k \mid N\\ pk \nmid N}} \mu(N/k)(p-1)\left(d^k+1\right)\nonumber\\
&=(p-1)\sum_{k' \mid N'} \mu\left(p^t N'/p^tk'\right)\left(d^{p^tk'}+1\right)\nonumber\\
&=(p-1)\sum_{k' \mid N'} \mu\left(N'/k\right)\left(d^{p^t}\right)^{k'}+ \sum_{k' \mid N'} \mu(N'/k')
\nonumber\\
&=\begin{cases}
(p-1)\ \sum_{k' \mid N'} \mu\left(N'/k'\right)\left(d^{p^t}\right)^{k'} & \text{if $N'>1$}\\
(p-1)\left(d^{p^t} + 1\right) & \text{if $N'=1$}.  \label{psineven}
\end{cases}
\end{align}

Since $N=p^t N'$,  $pN = p^{t+1}N'$.  If $k \mid pn$ but  $p^t \nmid k$, then $p^2 \mid \left(p^{t+1}N'/k\right)$, which means that $\mu (pN/k) = 0$.  So the only divisors which contribute to the sum below are ones of the form $p^tk'$ where $k' \mid pN'$.
\begin{align}
\deg \Phi_{pN}^* &= \sum_{k \mid pN} \mu(pN/k)\left(d^k + 1\right) \nonumber \\
&= \sum_{k' \mid pN'} \mu(p^{t+1}N'/p^t k')\left(d^{p^tk'} \right) \nonumber\\
&= \sum_{k' \mid pN'} \mu(pn'/ k')\left(d^{p^t} \right) ^{k'}. \label{phineven}
\end{align}
Comparing equations ~\eqref{psineven} and~\eqref{phineven}, we see that if $N'>1$ we are reduced to the case $p\nmid N$ above.  If $N'=1$, the sum in equation~\eqref{phineven} is 
\begin{align*}
d^{p^{t+1}}- d^{p^t} &\geq
4d^{p^t} - d^{p^t} & \text{since $d\geq 2$, $p\geq 2$ and $t\geq 1$}\\
&= 3d^{p^t}\\
&> d^{p^t} + 1 &  \text{since $d\geq 2$, $p\geq 2$ and $t\geq 1$}.
\end{align*}
So again $\deg\Phi_{pN}^* > \deg\Psi_{pN}^*$.
\end{proof}

We must also prove non-triviality in the sense that $\Psi_{pN}^* \neq 1$.  If all roots of $\Psi_{pN}^*$ are in $\Fix(h)$, then the polynomial $\widetilde{\Psi_{pN}^*}$ will be trivial.  So we must first examine the possible values of $b_Q^*(pN)$ when $Q \in \Fix(h)$.

\begin{prop}\label{bq*prop}
With the hypotheses in Theorem~\ref{mainthmred}, let $Q\in \Fix(h)$, and define integers $q$ and $r$ as in Proposition~\ref{polyprop}.  Then
$b_Q^*(pN) \geq 1$ if and only if one of the following conditions is true.
\begin{enumerate}[\textup(i\textup)]
\item\label{case1a}
$N=p^t$ for some $t\geq 0$.
\item\label{case2a}
$N=rp^t$ for some $t\geq 0$.
\item\label{case3a}
$N=rq^sp^t$ for some $t\geq 0$, $s\geq 1$.
\item\label{case4a}
$N=2p^t$ for some $t\geq 0$.
\item\label{case5a}
$N=2rp^t$ for some $t\geq 0$.
\item\label{case6a}
$N=2rq^sp^t$ for some $t\geq 0$, $s\geq 1$.
\end{enumerate}
\end{prop}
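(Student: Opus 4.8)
The plan is to reduce everything to the fixed-point analysis already carried out in Lemma~\ref{bnqfix} and then repeat, essentially verbatim, the Möbius-inversion bookkeeping from the proof of Proposition~\ref{polyprop}. The starting observation is that since $Q\in\Fix(h)$ we have $h^j(Q)=Q$ for every $j$, so the condition $\phi^N(Q)=h^j(Q)$ is simply $\phi^N(Q)=Q$; consequently the primitive $h$-period of $Q$ for $\phi$ coincides with the ordinary primitive period $m$ of $Q$ under $\phi$. By Lemma~\ref{2fp}(b) the orbit $\mathcal O_\phi(Q)$ has at most two points, so $m\in\{1,2\}$. This dichotomy is precisely what produces the two triples of cases: $m=1$ gives (i)--(iii) and $m=2$ gives (iv)--(vi), the extra factor of $2$ being the period.

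First I would treat the case $p\nmid N$. If $m\nmid N$, then for every $k\mid N$ one has $\phi^k(Q)=Q$ if and only if $m\mid k$, so $z=Q$ is not a root of $\Psi_{pk,\phi,h}$ for any $k\mid N$, giving $b_Q(pk)=0$ and hence $b_Q^*(pN)=0$. If $m\mid N$, put $\psi=\phi^m$ and $n=N/m$; then $Q\in\Fix(\psi)\cap\Fix(h)$, and exactly as in Proposition~\ref{polyprop} only the terms with $m\mid k$ survive in the defining sum, giving $b_Q^*(\phi,pN)=b_Q^*(\psi,pn)$. Now I would apply Lemma~\ref{bnqfix} to $\psi$ in place of $\phi$ and run the sub-cases that appear in the proof of Proposition~\ref{polyprop}. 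The only change is that the constant ``base'' value is $b_Q(\psi,p)=p-1$ rather than $1$; since the Möbius sum of a constant over the divisors of $n$ vanishes for $n>1$, and since the excess over the base comes, by the final step of the proof of Lemma~\ref{bnqfix}, from a single factor $\psi(z)-h^{j}(z)$ and hence behaves exactly as in Lemma~\ref{an}, positivity is governed in the same way as before: $b_Q^*(\psi,pn)>0$ precisely when $n=1$, or $n=r$, or $n=rq^s$ with $s\geq1$. Substituting $N=mn$ with $m\in\{1,2\}$ yields cases (i),(ii),(iii) (if $m=1$) or (iv),(v),(vi) (if $m=2$), each with $t=0$.

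Next I would handle $p\mid N$ by writing $N=p^tN'$ with $t\geq1$ and $p\nmid N'$, using the identity $b_Q^*(\phi,pN)=b_Q^*(\phi^{p^t},pN')$ obtained exactly as in~\eqref{pdivncomp}. Let $m'$ denote the primitive $h$-period of $Q$ for $\phi^{p^t}$ (equivalently, its primitive period under $\phi^{p^t}$). Since $\phi^m(Q)=Q$, one checks that $m'=m$ when $p\nmid m$, and $m'=1$ in the one remaining possibility $p=2$, $m=2$ (because then $\phi^{2^t}$ already fixes $Q$); in every case $p\nmid m'$, so $N'$ is coprime to $p$ and the previous paragraph applies to $\phi^{p^t}$. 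It gives $b_Q^*(\phi^{p^t},pN')>0$ iff $N'\in\{m',\,m'r,\,m'rq^s\}$, and translating through $N=p^tN'$ with $m'\in\{1,2\}$ produces exactly conditions (i)--(vi) with the stated exponents $t\geq1$. (When $p=2$, $m=2$ one lands in (i)--(iii) with the $p^t$ factor; this overlaps harmlessly with (iv)--(vi) since $2^t=2\cdot2^{t-1}$.)

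The main obstacle I anticipate is not any isolated computation --- all the real work is already packaged in Lemma~\ref{bnqfix} and in the Möbius estimates of Proposition~\ref{polyprop} --- but the careful tracking of how the primitive $h$-period behaves under the passage from $\phi$ to $\phi^{p^t}$, particularly in the degenerate case $p=2$, $m=2$, and checking that the resulting (deliberately overlapping) list of conditions matches (i)--(vi) on the nose. A secondary point to verify is the implicit coprimality $p\nmid r$ hiding in cases (ii),(iii),(v),(vi): here $\lambda_1(h^j,Q)=\lambda_1(h,Q)^j$ is a primitive $p\tha$ root of unity by Lemma~\ref{derivrel1}, so the ratios $\lambda_1(\psi,Q)/\lambda_1(h^j,Q)$ for varying $j$ differ only by $p\tha$ roots of unity, and one must confirm that the ``$(\cdot)^{n}=1$ for some $j$'' trigger of Lemma~\ref{bnqfix} can fire only through the case that forces $p\nmid r$, exactly as in the corresponding step of Proposition~\ref{polyprop}.
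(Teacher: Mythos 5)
Your proposal is correct and takes essentially the same route as the paper's proof: you note that $Q\in\Fix(h)$ makes the $h$-period an ordinary period $m\in\{1,2\}$, handle $p\nmid N$ by applying Lemma~\ref{bnqfix} via the reduction $\psi=\phi^m$, and handle $p\mid N$ by writing $N=p^tN'$ and using the identity from~\eqref{pdivncomp} together with careful tracking of the primitive period $m'$ of $Q$ under $\phi^{p^t}$ (including the degenerate $p=m=2$ case), exactly as the paper does.
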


\begin{proof}
Let $m$ be the primitive period of $Q$ for $\phi$.  From Lemma~\ref{fplem}, we know that  $m=1$ or~$2$.  So there are really three cases.  
\begin{enumerate}[\textup(i\textup)]
\item
$N=mp^t$ for some $t\geq 0$.
\item
$N=mrp^t$ for some $t\geq 0$.
\item
$N=mrq^sp^t$ for some $t\geq 0$, $s\geq 1$.
\end{enumerate}
In the case $p\nmid N$, the proofs  follow exactly as in Proposition~\ref{polyprop}.  However, we must reconsider the case where $p \mid N$, since we used in an essential way the assumption that $Q \notin \Fix(h)$.

Suppose, then, that $p\mid N$, so write $N = p^t N'$ where $p \nmid N'$.  As in equation~\eqref{pdivncomp}, we find that
$$
b_Q^*(\phi, pN) = b_Q^*\left(\phi^{p^t}, pN'\right).
$$
Let $m'$ be the primitive period of $Q$ for $\phi^{p^t}$.  We must have $m' = 1$ if $m=1$ or if $m=p=2$, and $m'=2$ otherwise.
Applying the three cases where $p \nmid N$ to $b_Q^*\left(\phi^{p^t}, pN'\right)$, we see that either 
$N' = m'$, or $N' = m'r$, or $N'=m'q^s$.  
Substituting each of these for $N'$ gives the desired result.
\end{proof}

We can now prove a nontriviality result for general fields $K$ by showing that points of primitive $h$-period $\ell$ for $\phi$ exist for almost all primes $\ell$.  This proof is essentially the same as the proof of existence of points of primitive period $\ell$ found, for example, in~\cite[page~$154$]{ads}.  The following weak result holds in general.  In Proposition~\ref{0charprop}, we prove a much stronger result for rational maps defined over a field of characteristic~$0$. 

\begin{prop}
Under the hypotheses of Theorem~\ref{mainthmred} with $\deg\phi = d$, for all prime numbers $\ell$ except for at most $d+6$ exceptions, the map $\phi$ has a point of primitive $h$-period $\ell$.
\end{prop}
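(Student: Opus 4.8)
The plan is to adapt the classical argument of~\cite[page~$154$]{ads} for the existence of points of primitive period~$\ell$ to the $h$-tuned setting, using Proposition~\ref{polyprop} to describe the roots of $\widetilde{\Psi_{p\ell}^*}$ and Proposition~\ref{bq*prop} to control its behaviour at $\Fix(h)$. First I would discard the two primes $\ell = p$ and $\ell = q$, where $q = \ch K$, so that from now on $p\nmid\ell$; for such $\ell$, equation~\eqref{degpsin} gives $\deg\Psi_{p\ell}^* = (p-1)(d^\ell - d)$. Writing $\Fix(h) = \{P_1, P_2\}$ (Lemma~\ref{fplem}), with $P_i$ of primitive period $m_i\in\{1,2\}$ and $r_i$ the multiplicative order of the relevant multiplier ratio at $P_i$, Proposition~\ref{bq*prop} shows $b_{P_i}^*(p\ell) = 0$ whenever $\ell\notin\{2, r_1, r_2\}$. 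Discarding those primes as well, we have $\deg\widetilde{\Psi_{p\ell}^*} = \deg\Psi_{p\ell}^* = (p-1)(d^\ell - d) > 0$, so $\widetilde{\Psi_{p\ell}^*}$ is a nonconstant homogeneous polynomial and hence has a root $Q\in\PP^1\smallsetminus\Fix(h)$.

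Next I would feed such a $Q$ into Proposition~\ref{polyprop}. Its primitive $h$-period $m$ divides $\ell$, so $m = 1$ or $m = \ell$, and since we have excluded $\ell\in\{p,q\}$ the only surviving possibilities are that $m = \ell$ --- in which case $Q$ is a point of primitive $h$-period $\ell$ and we are done --- or that $m = 1$ and $\ell = r$, the multiplicative order of $\lambda_1(\phi,Q)/\lambda_1(h^j,Q)$ where $\phi(Q) = h^j(Q)$. Thus $\phi$ can fail to have a point of primitive $h$-period $\ell$ only if \emph{every} root of $\widetilde{\Psi_{p\ell}^*}$ has $h$-period $1$ with this order equal to $\ell$, and it remains to bound how many primes can arise as such an order. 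A point of $h$-period $1$ lying outside $\Fix(h)$ is a root of $\Psi_p(x,y) = \prod_{j=1}^{p-1}\bigl(\phi(x,y) - h^j(x,y)\bigr)$, a polynomial of degree $(p-1)(d+1)$; by Lemma~\ref{msmall} its entire $h$-orbit consists of $h$-period-$1$ points, and by Lemma~\ref{derivrel2} the multiplier ratio, hence its order, is constant along that orbit. So these points fall into $h$-orbits of size $p$, of which there are at most $(p-1)(d+1)/p < d+1$, hence at most $d$; each such orbit pins down a single candidate prime $\ell$. Collecting the at most $d$ orbit-primes together with the primes $p$, $q$, $2$, $r_1$, $r_2$ set aside along the way yields at most $d + 6$ exceptional primes.

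The step I expect to be the main obstacle is verifying that $\widetilde{\Psi_{p\ell}^*}$ is genuinely nonconstant for all but finitely many $\ell$ --- equivalently, that stripping off the $\Fix(h)$ factors does not kill the whole polynomial --- which is precisely what Proposition~\ref{bq*prop} is needed for, together with a careful accounting of the characteristic-related branches of Proposition~\ref{polyprop} so that those degenerate cases get absorbed into the finite exceptional set rather than producing infinitely many bad primes.
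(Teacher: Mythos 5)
Your argument is correct and follows essentially the same route as the paper: discard a short initial list of primes so that $\widetilde{\Psi_{p\ell}^*}$ is nonconstant and its roots are controlled, then observe that a remaining ``bad'' prime $\ell$ would force some root of $\widetilde{\Psi_{p\ell}^*}$ to be an $h$-period-$1$ point whose multiplier ratio is a primitive $\ell^{\text{th}}$ root of unity, of which there are only boundedly many. The difference is in how that last count is organized. The paper excludes, up front, the primes arising as multiplier-ratio orders at ``at most $d+1$ points of primitive $h$-period~$1$,'' and then compares $\deg\Psi_{p\ell}$ to $\deg\Psi_p$. You instead invoke Proposition~\ref{polyprop} to classify roots of $\widetilde{\Psi_{p\ell}^*}$ directly and count $h$-orbits of period-$1$ points, using Lemma~\ref{derivrel2} to note that the multiplier ratio is constant on each orbit. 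Your orbit count (at most $(p-1)(d+1)/p < d+1$, hence at most $d$ orbits) is actually a little cleaner than the paper's point count for $p>2$, where the number of $h$-period-$1$ points can be as large as $(p-1)(d+1)$; organizing by orbits of size $p$ recovers the uniform bound, and in fact gives $d+5$ rather than $d+6$, which certainly suffices. One small remark: the fact that the whole $h$-orbit of an $h$-period-$1$ point consists of $h$-period-$1$ points follows directly from $\phi h = h\phi$ rather than from Lemma~\ref{msmall}, though it is in any case immediate.
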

 
 \begin{proof}
 We begin by discarding the finitely many primes satisfying any of the following conditions:
 \begin{itemize}
 \item
 $\ell = 2$.
 \item
 $\ell = p$.
 \item
 $\ell = \ch K $.
 \item
 There is some $Q$ with primitive $h$-period~$1$ and some $1 \leq j \leq p-1$ such that
 $\frac{\lambda_1(\phi,Q)}{\lambda_1\left(h^j, Q\right)}$ is a primitive $\ell\tha $ root of unity.
  \item
 There is some $Q\in \Fix(h)$ and some $1 \leq j \leq p-1$ such that
 $\frac{\lambda_1(\phi,Q)}{\lambda_1\left(h^j, Q\right)}$ is a primitive $\ell\tha $ root of unity.
 \end{itemize}

There are at most $d+1$ points of primitive $h$-period~$1$, and the set $\Fix(h)$ has at most two elements, so this list eliminates at most $d+6$ primes.  Note that we have eliminated the primes where $b_Q^*(p\ell)\geq 1$ for $Q\in \Fix(h)$.

For any of the remaining primes $\ell$, consider a root $Q$ of $\Psi_{p\ell}^*(x,y)$.  Then $Q$ must be a point of $h$-period $\ell$, and furthermore $Q \notin \Fix(h)$. 
 If $Q$ does not have primitive $h$-period $\ell$, it must have primitive $h$-period~$1$ by Lemma~\ref{msmall}.  Because of the primes we have eliminated, and by results in Proposition~\ref{an}, we see that 
 $b_Q(p\ell) = b_Q(p)$, so 
\begin{align*}
 \sum_{\text{roots of }\Psi_p\cap \text{roots of }\Psi_{p\ell}} b_Q(p\ell)
 & =  \sum_{\text{roots of }\Psi_p \cap \text{roots of }\Psi_{p\ell} } b_Q(p)\\
 & \leq  \sum_{\text{roots of }\Psi_p} b_Q(p) = d+1.
\end{align*}
 That is, the total multiplicity of all roots of $\Psi_{p\ell}$ that do not have primitive $h$-period $\ell$ is at most $d+1$.  But the degree of $\Psi_{p \ell}$ is $d^{\ell}+1$.  So $\Psi_{p \ell}$  has at least one root that is a point of primitive $h$-period $\ell$.
   \end{proof}

\begin{cor}\label{poscharprop}
For a rational map $\phi$ with $\deg\phi=d$, under the hypotheses of Theorem~\ref{mainthmred}, there exist at most $d+6$ primes $\ell$ such that the polynomial $\widetilde{\Psi_{p\ell}^*}$ is trivial.
\end{cor}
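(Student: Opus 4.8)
The plan is to read off Corollary~\ref{poscharprop} as a near-immediate consequence of the Proposition just proved, combined with Proposition~\ref{polyprop}. The key observation is the contrapositive: I would show that whenever $\ell$ is a prime lying \emph{outside} the exceptional set of size at most $d+6$ constructed in the preceding Proposition, the polynomial $\widetilde{\Psi_{p\ell}^*}$ is nontrivial; then the set of primes for which it \emph{is} trivial is contained in that exceptional set, which is exactly the assertion.

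So fix such a prime $\ell$. The preceding Proposition guarantees a point $Q \in \PP^1$ of primitive $h$-period $\ell$ for $\phi$, and, as recorded in that proof, such a $Q$ cannot lie in $\Fix(h)$: by Lemma~\ref{fplem}(b) every point of $\Fix(h)$ has $\phi$-orbit of size at most~$2$, hence $h$-period $1$ or $2$, never an odd prime (recall that $\ell=2$ is among the primes already discarded). Since $Q\notin\Fix(h)$, Definition~\ref{apdef} gives $\widetilde{b_Q^*}(p\ell)=b_Q^*(p\ell)$, and applying Proposition~\ref{polyprop} with $N=m=\ell$, i.e.\ case~\eqref{case1}, yields $\widetilde{b_Q^*}(p\ell)\geq 1$. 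By Proposition~\ref{polyprop} the object $\widetilde{\Psi_{p\ell}^*}$ is an honest element of $K[x,y]$, and it vanishes to positive order at $Q$; hence $\widetilde{\Psi_{p\ell}^*}\neq 1$.

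Consequently every prime $\ell$ with $\widetilde{\Psi_{p\ell}^*}$ trivial belongs to the finite exceptional set of the preceding Proposition, which has at most $d+6$ elements, completing the proof. I do not expect any genuine obstacle here: all of the substantive work is already contained in the earlier existence statement for points of primitive $h$-period $\ell$, and the only point requiring a moment's care is verifying that such a point avoids $\Fix(h)$, so that passing from $\Psi_{p\ell}^*$ to $\widetilde{\Psi_{p\ell}^*}$ does not discard its contribution.
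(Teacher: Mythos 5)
Your argument is correct and matches the paper's intent exactly: the paper dismisses the corollary with ``This follows immediately from the result above,'' and your proposal is simply the careful spelling-out of that immediacy (a point of primitive $h$-period $\ell$ exists, it lies outside $\Fix(h)$, and Proposition~\ref{polyprop} case~(\ref{case1}) forces $\widetilde{b_Q^*}(p\ell)\geq 1$, so $\widetilde{\Psi_{p\ell}^*}\neq 1$). No difference in approach.
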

\begin{proof}
This follows immediately from the result above.
\end{proof}

As in the case of periodic points, a stronger result is possible if we restrict ourselves to characteristic~$0$.  The following result parallels one by I.N.~Baker for periodic points in~\cite{inbaker}.

\begin{prop}\label{0charprop}
Let $\phi$ be a rational map of degree $d\geq 2$ defined over a field~$K$ of characteristic~$0$, and let $h$ be an automorphism for $\phi$ of prime order~$p$.  Suppose that $\phi$ has no points of primitive $h$-period $N>1$ in $\overline K$.   Then 
\[
(d,N) \in \left\{(2, 2), (2, 3), (2, 4),  (3, 2),  (4, 2)\right\}.
\]
\end{prop}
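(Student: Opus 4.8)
The plan is to run the counting argument of I.N.~Baker~\cite{inbaker} in the $h$-tuned setting. Assume $\phi$ has no point of primitive $h$-period $N$ for some fixed $N>1$. By Proposition~\ref{polyprop} --- in which the $q^s$-cases are vacuous because $\ch K=0$ --- every root $Q$ of $\Psi_{pN}^*(x,y)$ either lies in $\Fix(h)$ or has primitive $h$-period $m$ for some divisor $m\mid N$ with $N=m$ or $N=mr$, where $r>1$ is the multiplicative order of $\lambda_1(\phi^m,Q)/\lambda_1(h^j,Q)$. The hypothesis kills the case $N=m$, so every root of $\Psi_{pN}^*$ not in $\Fix(h)$ is ``parasitic'': it has primitive $h$-period a \emph{proper} divisor $m$ of $N$ and $N=mr$ exactly. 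Recalling $\deg\Psi_{pN}^* = (p-1)\,\nu_d(N)$ for $p\nmid N$ by~\eqref{degpsin} (with the analogous formula~\eqref{psineven} when $p\mid N$), and that $\deg\widetilde{\Psi_{pN}^*} = \deg\Psi_{pN}^* - \sum_{Q\in\Fix(h)}b_Q^*(pN)$ by Definition~\ref{psidef}, the whole proof reduces to bounding the two quantities
\[
B_{\mathrm{fix}} := \sum_{Q\in\Fix(h)} b_Q^*(pN), \qquad B_{\mathrm{par}} := \sum_{Q\text{ parasitic}} b_Q^*(pN)
\]
by constants depending only on $d$ and $p$: indeed the non-existence hypothesis forces $(p-1)\nu_d(N) = B_{\mathrm{fix}} + B_{\mathrm{par}}$, and $\nu_d(N)$ grows like $d^N$, so only finitely many pairs $(d,N)$ can satisfy such an equality.

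For $B_{\mathrm{par}}$ I would use the characteristic-zero input, exactly as Baker does. If $Q$ is parasitic of primitive $h$-period $m$, then $\lambda_1(\phi^m,Q)/\lambda_1(h^j,Q)$ is a primitive $r$-th root of unity with $p\nmid r$; by~\eqref{phip} this makes $\lambda_1(\phi^{pm},Q)$ a primitive $r$-th root of unity, so $Q$ lies on a rationally indifferent cycle of $\phi$ (of primitive period $pm$, as in the proof of Proposition~\ref{psipnpoly}). In characteristic zero a rational map of degree $d$ has only finitely many rationally indifferent cycles, with a bound of the shape $O(d)$ both on their number and on the total ``petal'' multiplicity (the classical Fatou bound, valid over any characteristic-zero field via the Lefschetz principle). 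Translating through~\eqref{rtunitcase} --- each parasitic $Q$ contributes $b_Q^*(pN)=b_Q(\phi^N,p)-b_Q(\phi^m,p)$ --- and the petal estimates of Lemmas~\ref{creqns} and~\ref{phij-hj} (used as in the proof of Proposition~\ref{psipnpoly}), one gets $B_{\mathrm{par}}\le C_1(d,p)$ uniformly in $N$ (in fact $B_{\mathrm{par}}=0$ for all but finitely many $N$).

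For $B_{\mathrm{fix}}$ I would invoke Proposition~\ref{bq*prop}: for $Q\in\Fix(h)$, $b_Q^*(pN)\ge 1$ only when $N$ has one of the special shapes $p^t$, $rp^t$, $2p^t$, $2rp^t$ (the $q^s$-shapes again vanishing), and $\Fix(h)$ has at most two points; when $b_Q^*(pN)>0$ its value is, via the Möbius cancellation together with Lemma~\ref{bnqfix}, a bounded ``jump'' of the same petal type as above (for instance $b_Q^*(p\,p^t)=p-1$ for every $t$ when the relevant multiplier is not a root of unity). Hence $B_{\mathrm{fix}}\le C_2(d,p)$ uniformly in $N$. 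Combining, the hypothesis forces $(p-1)\nu_d(N)\le C_1(d,p)+C_2(d,p)$; since $\nu_d(N)=\sum_{k\mid N}\mu(N/k)d^k \ge d^N - \tfrac{d^{\lfloor N/2\rfloor+1}}{d-1}$, a cousin of the convolution estimate of Lemma~\ref{mucalc} shows this can hold only for $(d,N)$ in a short explicit finite set, which a direct check collapses to $\{(2,2),(2,3),(2,4),(3,2),(4,2)\}$.

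The main obstacle is the constant-chasing: one must make $C_1,C_2$ sharp enough that the surviving pairs are exactly the five listed (and no more), which requires tracking the petal contributions of each rationally indifferent cycle and each $\Fix(h)$ point across all the relevant values $N=mr$, rather than settling for crude degree bounds such as $b_Q(\phi^N,p)\le (p-1)(d^N+1)$. The rest --- the reduction to the degree inequality and the final arithmetic elimination --- is routine given the machinery of Sections~5 and~6.
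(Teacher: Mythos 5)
Your overall strategy --- a Baker--style counting argument comparing the degree of the $h$-tuned object against a ``parasitic'' contribution coming from rationally indifferent cycles and $\Fix(h)$ --- is the same one the paper uses, and your diagnosis that the characteristic-zero hypothesis enters precisely by forcing every excess root onto a rationally indifferent cycle is exactly right. The bookkeeping, however, has a genuine gap. You claim bounds $B_{\mathrm{par}}\le C_1(d,p)$ and $B_{\mathrm{fix}}\le C_2(d,p)$ \emph{uniform in $N$} (and, implicitly, uniform over all $(\phi,h)$ of degree $d$, which you need for the conclusion that $(d,N)$ lies in a fixed finite list). This is not justified. The Fatou--Shishikura input you cite bounds the \emph{number} of non-repelling cycles and the number of attracting petal cycles by $O(d)$, but the quantity that actually shows up in $b_Q^*(pN)$ --- via equation~\eqref{rtunitcase} and the comparison with $a_Q^*(pN)$ --- is of the size of the parabolic multiplicity $\nu r$, where $r$ is the order of the root-of-unity multiplier. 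While $\nu$ is controlled by the critical orbit count, $r$ is not controlled by $d$ and $p$ at all (it can be as large as you like for maps in $\Rat_d$). So the term you call a ``bounded jump'' can grow without bound over the family, and the clean equality $(p-1)\nu_d(N)=B_{\mathrm{fix}}+B_{\mathrm{par}}$ does not force $N$ to be small from constant bounds alone.

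The paper's proof closes exactly this gap by lowering its ambitions: rather than aiming for constant bounds, it works with $\Psi_{pN}$ (not $\Psi_{pN}^*$), reduces the parasitic contribution to $\sum a_Q(pN)$ over rationally indifferent $Q$ via Proposition~\ref{psipnpoly}, and then quotes Beardon's estimate $\sum a_Q(pN)\le pN(d-1)$. That bound is only linear in $N$, but it \emph{is} uniform over $\Rat_d$, and the lower bound $(p-1)\bigl(d^N-d^{N-1}-(N-2)\bigr)$ coming from degree counting is exponential in $N$, so the comparison still forces $(d,N)$ into the short list after a finite check. If you replace your ``$\le C_1(d,p)$'' step with Beardon's $pN(d-1)$ and run the same arithmetic you sketch at the end, your argument becomes essentially the paper's. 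As written, though, the constant-bound claim --- which you yourself flag as the ``main obstacle'' --- is the place the proof breaks, and it is not a matter of constant-chasing but of the bound's shape in $N$.
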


\begin{proof}
 Suppose that $\phi$ is as described, and that $\phi$ has no points of primitive $h$-period~$N$.  Then all roots of the (nontrivial) polynomial $\Psi_{pN}(x,y)$ are accounted for by points of primitive $h$-period $m<N$ or by points in $\Fix(h)$.  Let
\[
 S =\left\{ Q \in \PP^1 : \Psi_{pN}(Q) = 0\right\},
\]
 and for each $Q \in S$, let 
\[
 m_Q = 
\begin{cases}
\text{the primitive $h$-period of $Q$ for $\phi$} &\text{if $Q \notin \Fix(h)$}\\
\text{the primitive period of $Q$ for $\phi$ (necessarily $1$ or $2$)} &\text{if $Q \in \Fix(h)$}.
\end{cases}
\]
 By Lemma~\ref{msmall} and established properties of periodic points (see~\cite{beardon}, for example), we know that each $m_Q \mid N $.  So let
\[
 M = \left\{ m \in \Z : 1 \leq m < N \text{ and } m \mid N \right\}.
\]

We now compute lower and upper bounds for
\begin{equation}\label{sumineq}
\sum_{Q \in S} \left(b_Q(pN) - b_Q\left(pm_Q\right) \right),
\end{equation}
under the assumption that $Q\in S$ implies that $m_Q\in M$; that is, under the assumption that there are no points of primitive $h$-period $N$.
For the lower bound,
\begin{align}
\sum_{Q \in S}b_Q(pN) &= \deg \Psi_{pN} = (p-1)\left(d^n +1\right)\label{est1}\\
\sum_{Q \in S}b_Q(pm_Q )
&= \sum_{m\in M}\sum_{\substack{m_Q= m\\ Q \in S}} b_Q(p m_Q )\nonumber\\
&\leq \sum_{m\in M} \deg \Psi_{pm} = \sum_{m\in M} (p-1)\left(d^m + 1\right).\label{est2}
\end{align}
Now, when $N=2$, the set $M=\{1\}$, so the final sum in equation~\eqref{est2} is exactly 
\[
(p-1)(d+1) =(p-1)\left( d^{N-1} + (N-1) \right).
\]
  If $N>2$, then $\gcd(N, N-1) = 1$ and so 
\begin{equation*}
\sum_{m\in M} \left(d^m + 1\right) \leq \sum_{i=1}^{N-2} \left(d^i + 1\right)  \leq d^{N-1} + N-1.
\end{equation*}
So we have our lower bound:
\begin{equation}\label{lower}
(p-1)\left(d^N - d^{N-1} - (N - 2) \right) \leq \sum_{Q \in S} \left(b_Q(pN) - b_Q\left(p m_Q\right) \right) .
\end{equation}

To compute the upper bound, we will use the assumption that all of the points in $S$ are roots of $\Psi_{pm}$ for some $m\in M$.  Then from Lemmas~\ref{an} and~\ref{bnqfix} applied to the map $\phi^m$, we see that $b_Q(pN)-b_Q(pm) > 0 $ if and only if 
\[
\frac{\lambda_1\left(\phi^m,Q\right)}{\lambda_1\left(h^j,Q\right)}
\]
is a primitive $r\tha$ root of unity for some $r $ not divisible by $p$ and some $1 \leq j \leq p-1$.  Equation~\eqref{phip} (again applied to $\phi^m$) shows that $\lambda_1(\phi^{pm},Q)$ must then be a primitive $r\tha $ root of unity.  In other words, $Q$ must be on a rationally indifferent cycle of length $pm$.

Also, by Proposition~\ref{psipnpoly}, we know that $a_Q(pN) \geq b_Q(pN)$ for every $N$.  So we may now compute the upper bound:
\begin{align*}
 \sum_{Q \in S} \left(b_Q(pN) - b_Q\left(p m_Q\right) \right) 
& =
 \sum_{\substack{Q \in S \\ Q \text{ on a rationally}\\ \text{ indifferent cycle}} }
 \left(b_Q(pN)- b_Q\left(p m_Q\right) \right)  \displaybreak[0]\\
 & \leq 
  \sum_{\substack{Q \in S \\ Q \text{ on a rationally}\\ \text{ indifferent cycle}} }b_Q(pN)\displaybreak[0]\\\
&  \leq
   \sum_{\substack{Q \in S \\ Q \text{ on a rationally}\\ \text{ indifferent cycle}} }a_Q(pN).
\end{align*}

Beardon provides exactly the upper bound we require; in~\cite[page~$146$]{beardon} he shows that 
\begin{equation}\label{upper}
   \sum_{\substack{Q \in S \\ Q \text{ on a rationally}\\ \text{ indifferent cycle}} }a_Q(pN)
   \leq pN(d-1).
\end{equation}

By hypothesis, $N>1$.  We now show that if $d>4$,  the inequality 
\begin{equation}
(p-1)\left(d^N - d^{N-1} - (N - 2) \right)
\leq
pN(d-1)
\end{equation}
cannot hold.
Since $\frac {p-1}{p}\geq \frac 1 2$, we will instead use the inequality
\begin{equation}\label{ineq}
\frac 1 2\left(d^N - d^{N-1} - (n - 2) \right)
\leq
N(d-1).
\end{equation}
 Also,
$\frac 12\left(d^N - d^{N-1} - (n - 2) \right) \geq \frac 12\left(d^N - d^{N-1}\right) -N$, so we have
\begin{align}
\frac 1 2 \left(d^N - d^{N-1} \right)
&\leq
Nd\nonumber \\
\frac 1 2 \left(d^{N-1} - d^{N-2} \right)
&\leq
N \nonumber\\
\frac{d-1}{2}d^{N-2}
&
\leq N\label{d-1/2}\\
2d^{N-2} &\leq N, \nonumber
\end{align}
where the last step follows from the assumption that $d>4$.
The function $2d^{N-2} - N$ is increasing with $N$ and is~$0$ when $N=2$.   Going back to the original inequality~\eqref{ineq}, we check that for $N=2$ and $d>4$ it still cannot hold:
\begin{align*}
\frac 1 2 \left(d^2 - d   \right)
&\leq
2(d-1)\\
d(d-1) & \leq 4(d-1)\\
d& \leq 4,
\end{align*}
which contradicts $d > 4$.

When $d=4$, the inequality in equation~\eqref{d-1/2} becomes
$\frac{3}{2}4^{N-2}
\leq N$.  Again, we see that the function $\frac{3}{2}4^{N-2} - N$ is increasing with $N$ and it is already positive when $N=3$, so the inequality can hold only when $N=2$.  

Similar computations show that when $d=3$ and $N\geq 3$, or when $d=2$ and $N\geq 5$, then inequality~\eqref{ineq} cannot hold.
\end{proof}

With this, we have completed the proof of Theorem~\ref{mainthmred}.   A consequence of this theorem is the following.

\begin{cor}\label{mainthmcor}
The moduli space $M_d(pN, \mathfrak C_p)$ is geometrically reducible for 
\begin{enumerate}[\textup(a\textup)]
\item
all but finitely many integers $N$ if $K$ has characteristic~$0$, and
\item
all but finitely many prime integers $N$ for arbitrary fields $K$.
\end{enumerate}
\end{cor}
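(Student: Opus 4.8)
The idea is to construct, directly inside $M_d(pN,\mathfrak C_p)$, the proper closed subvariety promised in the introduction --- the locus of pairs whose marked point has primitive $h$-period $N$ --- and to show it has the same dimension as $M_d(pN,\mathfrak C_p)$; since an irreducible scheme admits no proper closed subscheme of full dimension, this forces reducibility. As the assertion is geometric I would base-change to $\overline K$ at the outset. Write $\mathfrak C_p=\langle z\mapsto\zeta_p z\rangle$ and $h(z)=\zeta_p z$, so $\Fix(h)=\{0,\infty\}$, and let $W$ be a maximal-dimensional irreducible component of $\Rat_d(\mathfrak C_p)=\{\phi:\phi(\zeta_pz)=\zeta_p\phi(z)\}$. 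Over $W$ the iterates $F_{pN},G_{pN}$, hence $\Phi_{pN,\phi}^*(x,y)$, have coefficients regular on a dense open of $W$, and Proposition~\ref{polyprop} gives the same for $\widetilde{\Psi_{pN,\phi,h}^*}$; moreover $\Rat_d(pN,\mathfrak C_p)$ restricted over $W$ is the divisor $\mathcal V=\{(\alpha,\phi)\in\PP^1\times W:\Phi_{pN,\phi}^*(\alpha)=0\}$. Because $\widetilde{\Psi_{pN,\phi,h}^*}$ depends only on $\mathfrak C_p$ (one has $\Psi_{pN,\phi,h}=\Psi_{pN,\phi,h^j}$ for $\gcd(j,p)=1$, and $\Fix(h^j)=\Fix(h)$) and transforms correctly under coordinate changes by Lemma~\ref{equalcoeffs}, the subset $\mathcal W=\{(\alpha,\phi)\in\mathcal V:\widetilde{\Psi_{pN,\phi,h}^*}(\alpha)=0\}$ is stable under the normalizer $N_{\PGL_2}(\mathfrak C_p)$ and so descends to a closed subscheme $Z\subseteq M_d(pN,\mathfrak C_p)$.

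Next I would verify that $Z$ has maximal dimension. The crucial input is the existence of points of primitive $h$-period $N$: by Proposition~\ref{0charprop} in characteristic $0$ (for all $N$ outside the listed finite set) and by Corollary~\ref{poscharprop} in arbitrary characteristic (for all primes outside at most $d+6$ exceptions), the \emph{generic} member $\phi$ of $W$ has such a point, i.e.\ $\widetilde{\Psi_{pN,\phi,h}^*}\neq 1$; hence $\mathcal W$ dominates $W$. Its fibers are finite (a map has only finitely many periodic points of bounded period), as are the fibers of $\mathcal V\to W$, and the $N_{\PGL_2}(\mathfrak C_p)$-orbits are finite-dimensional, so $\dim Z=\dim M_d(pN,\mathfrak C_p)$. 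It remains to show $Z$ is \emph{proper}, which I expect to be the main obstacle (the introduction already flags this): one must produce, for the generic $\phi\in W$, a point of formal period $pN$ that does \emph{not} have primitive $h$-period $N$, i.e.\ a point of $\mathcal V$ lying off $\mathcal W$. Here I would use Proposition~\ref{degarg}: for the relevant $N$ (all $N\ge 1$ if $d>2$, all $N>1$ if $d=2$), $\deg\widetilde{\Psi_{pN}^*}<\deg\Phi_{pN}^*$, while Proposition~\ref{psipnpoly} gives $\widetilde{\Psi_{pN}^*}\mid\Phi_{pN}^*$, so the cofactor $R:=\Phi_{pN}^*/\widetilde{\Psi_{pN}^*}$ is nonconstant; one then checks that its roots are genuinely new --- points $P$ of formal period $pN$ for which $h$ does not preserve the $\phi$-orbit of $P$, the second case of the orbit dichotomy described in the introduction --- so that generically $R$ and $\widetilde{\Psi_{pN}^*}$ share no factor and $\Phi_{pN}^*$ has at least two distinct irreducible factors over the function field of $W$. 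Thus $\mathcal V\neq\mathcal W$ and $Z\subsetneq M_d(pN,\mathfrak C_p)$.

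Finally, for the relevant $N$ we have a closed subvariety $Z\subsetneq M_d(pN,\mathfrak C_p)$ with $\dim Z=\dim M_d(pN,\mathfrak C_p)$; an irreducible scheme cannot contain a proper closed subscheme of equal dimension, so $M_d(pN,\mathfrak C_p)$ is reducible. (One can phrase the same argument through Theorem~\ref{mainthmred} applied to the generic point of $W$ --- a degree-$d$ map over $\overline K(W)$ with automorphism $h$ and $\Fix(h)$ rational --- which gives $\Phi_{pN}^*$ reducible over $\overline K(W)$; the $N_{\PGL_2}(\mathfrak C_p)$-equivariance of the factorization $\Phi_{pN}^*=\widetilde{\Psi_{pN}^*}\cdot R$, together with the fact that the identity component $\mathbb G_m$ of that normalizer must act trivially on the finite set of components of $\mathcal V$, shows the decomposition survives passage to the quotient $M_d(pN,\mathfrak C_p)$.) The two clauses of the corollary correspond precisely to whether Proposition~\ref{0charprop} or the weaker Corollary~\ref{poscharprop} is used to guarantee $\widetilde{\Psi_{pN}^*}\neq 1$, yielding ``all but finitely many $N$'' in characteristic $0$ and ``all but finitely many prime $N$'' in general.
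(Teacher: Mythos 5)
Your proposal is correct and follows essentially the same strategy as the paper: use Propositions~\ref{polyprop}, \ref{psipnpoly}, \ref{degarg}, and the nontriviality results (Corollary~\ref{poscharprop} / Proposition~\ref{0charprop}) to carve out the vanishing locus of $\widetilde{\Psi_{pN}^*}$ as a proper, full-dimensional closed subvariety of $\Rat_d(pN,\mathfrak C_p)$, and then descend to $M_d(pN,\mathfrak C_p)$ by observing that the orbit condition ``$\zeta_p\alpha\in\mathcal O_\phi(\alpha)$'' is invariant under the relevant conjugations. The paper phrases the descent step directly via the $\PGL_2$-invariance of that orbit dichotomy rather than through the normalizer $N_{\PGL_2}(\mathfrak C_p)$ as you do, but this is the same argument in different clothing.
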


\begin{proof}
This result for the space $\Rat_d(pN, \mathfrak C_p)$ follows immediately from the work above.  We may fix the automorphism $h(z) = \zeta_p z$, and write an arbitrary map $\phi(x,y)$ as in~\eqref{phiform}.   Then whenever $ \widetilde{\Psi_{pN, \phi, h}} $ is nontrivial, its vanishing defines a proper closed subvariety $X \subset \Rat_d(pN, \mathfrak C_p)$.

The image of $X$ under the projection $\Rat_d(pN, \mathfrak C_p) \to M_d(pN, \mathfrak C_p)$ is clearly closed.  
We know that 
\begin{align*}
X = \{ (\alpha, \phi) \in \PP^1 \times \Rat_d \colon
&\alpha \text{ is a point of formal period  $pN$ for $\phi$ },\\
& \Aut(\phi) \text{ contains the subgroup } \langle \zeta_p z \rangle, \text{ and } \\
&\zeta_p \alpha \in \mathcal O_{\phi}(\alpha) \}.
\end{align*}
On the other hand, if $Y = \Rat_d(pN, \mathfrak C_p) \smallsetminus X$ then 
\begin{align*}
Y =
 \{ (\alpha, \phi) \in \PP^1 \times \Rat_d \colon
&\alpha \text{ is a point of formal period  $pN$ for $\phi$ },\\
& \Aut(\phi) \text{ contains the subgroup } \langle \zeta_p z \rangle, \text{ and } \\
&\zeta_p \alpha \not \in \mathcal O_{\phi}(\alpha) \}.
\end{align*}

The subvarieties $X$ and $Y$ are clearly not equivalent under the $PGL_2$ action, so their images under the projection $\Rat_d(pN) \to M_d(pN)$ will be disjoint.  Hence, we see that the image of $X$ will be a proper closed subscheme of $M_d(pN, \mathfrak C_p)$.
\end{proof}

\section{Reducibility for pure power functions}\label{power}

We are able to provide a complete description of how the dynatomic polynomials factor in the case of the pure power functions $z^d$ and their reciprocals $z^{-d}$.  Note that both maps have the order-$2$ automorphism $z \mapsto 1/z$.  Additionally, $\phi(z) = z^d$ has a $z\mapsto \zeta_{d-1}z$ automorphism and $\phi(z) = \frac 1 {z^d}$ has a $z\mapsto \zeta_{d+1}z$ automorphism, where $\zeta_k$ represents a primitive $k\tha$ root of unity.

\begin{lemma}\label{z2case}
Let $\phi(z) = z^d$ for $d\geq 2$.  Then for $N>1$,
\begin{equation}\label{powerfactor}
\Phi_{n,\phi}^*(z) = \prod_{\substack{k\mid d^N - 1\\  k \nmid d^m - 1, \\
m\mid n, m\neq n}} C_k(z),
\end{equation}
where $C_k(z)$ is the $k\tha$ cyclotomic polynomial in $z$.
\end{lemma}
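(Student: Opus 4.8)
The plan is to compute everything explicitly, since the power map iterates transparently. First I would observe that $\phi(x,y) = [x^d:y^d]$ gives $\phi^N(x,y) = [x^{d^N}:y^{d^N}]$, so that
\[
\Phi_{N,\phi}(x,y) = y\,x^{d^N} - x\,y^{d^N} = xy\left(x^{d^N-1} - y^{d^N-1}\right) = xy \prod_{k\mid d^N-1} c_k(x,y),
\]
where $c_k(x,y)$ is the homogenization of the $k\tha$ cyclotomic polynomial $C_k$, using the standard factorization $t^n-1 = \prod_{k\mid n}C_k(t)$ in $\Z[t]$. Dehomogenizing (set $y=1$) recovers $\Phi_{N,\phi}(z) = z\bigl(z^{d^N-1}-1\bigr)$.

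Next I would assemble the dynatomic polynomial from its definition $\Phi_{N,\phi}^* = \prod_{k\mid N}\Phi_{k,\phi}^{\mu(N/k)}$. The prefactors $xy$ contribute $(xy)^{\sum_{k\mid N}\mu(N/k)} = (xy)^0 = 1$ precisely because $N>1$; this is where the hypothesis $N>1$ enters, and it reflects the disappearance of the two spurious fixed points $0$ and $\infty$ (for $N=1$ one has $\Phi_1^* = \Phi_1 = xy(x^{d-1}-y^{d-1})$ and the $xy$ survives). Hence
\[
\Phi_{N,\phi}^*(x,y) = \prod_{k\mid N}\ \prod_{j\mid d^k-1} c_j(x,y)^{\mu(N/k)},
\]
and, collecting the exponent of a fixed $c_j$, its multiplicity in $\Phi_{N,\phi}^*$ is $\varepsilon_j := \sum_{k\mid N,\ j\mid d^k-1}\mu(N/k)$. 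In particular every $j$ occurring divides $d^N-1$ (take $k$ with $k\mid N$ and $j\mid d^k-1$; then $j\mid d^N-1$), so $\gcd(d,j)=1$ and $d$ has a well-defined order modulo $j$.

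The heart of the argument is evaluating $\varepsilon_j$. Writing $e_j = \ord_j(d)$, we have $j\mid d^k-1 \iff e_j\mid k$. If $e_j\nmid N$ the index set is empty, so $\varepsilon_j=0$. If $e_j\mid N$, substituting $k = e_j\ell$ with $\ell\mid N/e_j$ gives $\varepsilon_j = \sum_{\ell\mid N/e_j}\mu\bigl((N/e_j)/\ell\bigr)$, which is $1$ when $e_j = N$ and $0$ otherwise. Thus $c_j$ appears in $\Phi_{N,\phi}^*$ exactly once if $\ord_j(d)=N$ and not at all otherwise. Finally, $\ord_j(d)=N$ is equivalent to the divisibility condition in the statement: $\ord_j(d)$ always divides $N$ here, and it fails to be a proper divisor $m\mid N$ exactly when $j\nmid d^m-1$ for every such $m$ while $j\mid d^N-1$. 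Dehomogenizing the resulting product of the $c_j$ yields the claimed formula in terms of the $C_k(z)$.

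Since the whole proof is a direct computation, there is no serious obstacle; the two places demanding a little care are the cancellation of the $xy$ factor (which quietly encodes why $N>1$ is required) and the order-theoretic bookkeeping that rephrases ``$N$ is the least $k$ with $j\mid d^k-1$'' as the stated condition on divisors of $N$. It is also worth establishing the cyclotomic factorization and the final polynomial identity over $\Z$ (hence over $\Q$) first and then reducing mod the characteristic, so that the statement holds over an arbitrary $K$ as an identity of polynomials.
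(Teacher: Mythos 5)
Your proof is correct, and it takes a genuinely different and cleaner route than the paper's. The paper first verifies the formula for $N$ prime by telescoping $\prod_{k\mid N}(z^{d^k}-z)^{\mu(N/k)}$ down to $\frac{z^{d^N-1}-1}{z^{d-1}-1}$, and then handles composite $N$ by strong induction after writing $N=p^e n$ with $p\nmid n$; the inductive step requires a somewhat delicate splitting of the M\"obius product over divisors of the form $p^{e-1}k'$ and $p^e k'$, followed by an argument to recombine the two resulting cyclotomic products. You instead compute, in one shot, the exponent $\varepsilon_j = \sum_{k\mid N,\ j\mid d^k-1}\mu(N/k)$ of each cyclotomic factor $C_j$, use $j\mid d^k-1 \iff \ord_j(d)\mid k$ to rewrite this as a M\"obius sum over divisors of $N/\ord_j(d)$, and conclude $\varepsilon_j = 1$ precisely when $\ord_j(d)=N$. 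This avoids the induction entirely and makes the structural content explicit: $C_j$ survives in $\Phi_{N}^*$ exactly when the multiplicative order of $d$ modulo $j$ equals $N$, which is transparently the divisibility condition in the statement. Your observation that the hypothesis $N>1$ enters precisely to kill the $xy$ prefactor via $\sum_{k\mid N}\mu(N/k)=0$ is also a nice point that the paper leaves implicit. Both proofs are valid; yours is the more conceptual and would generalize more readily, while the paper's has the (slight) virtue of being closer in spirit to the manipulations used elsewhere in that section for $1/z^d$.
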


\begin{remark}
We note that this fact has appeared in the literature, for example in~\cite{algcurve}.  As we could not find a proof, we provide one here for completeness.
\end{remark}

\begin{proof}
First we show that equation~\eqref{powerfactor} holds for $N$ prime.
\begin{align*}
\Phi_{N,\phi}^*(z) 
&= \prod_{k|N}\left(z^{d^k}-z\right)^{\mu(N/k)}
= \frac{z^{d^N-1}-1}{z^{d-1}-1} \\
&= \frac{\prod_{k\mid d^N-1}C_k(z)}{\prod_{k\mid d-1}C_{k}(z)}
=  \prod_{\substack{k\mid d^N - 1\\ k \nmid d^m - 1, \\m\mid n, m\neq n}} C_k(z).
\end{align*}

Now assume that the result holds for all $n< N$.  Since we have the result for primes, we assume $N$ is 
composite, and write $N=p^e n$ for some prime $p$, with $e\geq 1$ and $p \nmid n$.  If $k\mid N$ and $p^{e-1} \nmid k$, then $p^2 \mid (N/k)$, which gives $\mu(N/k)=0$; in other words, $k$ does not contribute to the product $\Phi_N^*$.  Further, if $p^{e-1}k \mid N$, then $k\mid pn$, so either $k = k'$ or $k=pk'$ for some $k'\mid n$, and these sets are disjoint since $p\nmid n$.  Finally, note that since $p\nmid n$ (and hence $p\nmid k$ for any $k\mid n$), we have $\mu(p^en/p^{e-1}k) = \mu(pn/k)=-\mu(n/k)$,  Putting this all together, we find

\begin{align*}
\Phi_{N}^*(z) &= \prod_{k|N}\left(z^{d^k}-z\right)^{\mu(N/k)}\\
&=\left(\prod_{k|n}(z^{d^{(p^{e-1}k)}}-z)^{\mu(p^en/p^{e-1}k)}\right)
\left(\prod_{k|n}(z^{d^{(p^{e}k)}}-z)^{\mu(p^en/p^{e}k)}\right) \displaybreak[0] \\
&=\left(\prod_{k|n}\left(z^{\left(d^{p^{e-1}}\right)^k}-z\right)^{-\mu(n/k)}\right)
\left(\prod_{k|n}\left(z^{\left(d^{p^{e}}\right)^k}-z\right)^{\mu(n/k)}\right) \displaybreak[0] \\
&=\left(\prod_{\substack{k\mid \left(d^{p^{e-1}}\right)^n - 1\\ k \nmid  \left(d^{p^{e-1}}\right)^m - 1, \\ m\mid n, m\neq n}} \frac{1}{C_k(z)}\right)
\left(\prod_{\substack{k\mid \left(d^{p^{e}}\right)^n - 1\\ k \nmid  \left(d^{p^{e}}\right)^m - 1, \\ m\mid n, m\neq n}} C_k(z)\right) =  \prod_{\substack{k\mid d^N - 1\\ k \nmid d^m - 1, \\ m\mid N, m\neq N}}C_k(z).
\end{align*}

  The penultimate equality above follows from the induction hypothesis because $n<N$.  The final equality follows from the fact that if $k\mid d^m-1$ for some $m\mid N$ and $m\neq N$, then $k\mid d^{p^e m'} - 1$ for some $m'\mid n$ or $k\mid d^{p^{e-1}n}-1 $.  (Note the divisors of $d^{p^{e-1}m}-1 $ for some $m\mid n$ are included in the first set.)
\end{proof}

\begin{example}
Let $\phi(z) = z^2$.  Here are factorizations for the first few dynatomic polynomials:
\begin{align*}
\Phi_1^*(z) &= z(z-1) \displaybreak[0]\\
\Phi_2^*(z) &= z^2+z+1 \displaybreak[0]\\
\Phi_3^*(z) &= z^6+z^5+z^4+z^3+z^2+z+1 \displaybreak[0]\\
\Phi_4^*(z) &= \left(z^4+z^3+z^2+z+1\right)
   \left(z^8-z^7+z^5-z^4+z^3-z+1\right)\displaybreak[0]\\
\Phi_5^*(z) &= z^{30}+z^{29}+z^{28}+z^{27}+z^{26}+z^{25}+z^{24}+z^{23}+z^
   {22}+z^{21}+z^{20}\\
   & \qquad +z^{19}+z^{18}+z^{17}+z^{16}+z^{15}+z
   ^{14}+z^{13}+z^{12}+z^{11}+z^{10}\\
   & \qquad \quad+z^8+z^7+z^6+z^5+z
   ^4+z^3+z^2+z+1\displaybreak[0]\\
\Phi_6^*(z) &= \left(z^6+z^3+1\right)
   \left(z^{12}-z^{11}+z^9-z^8+z^6-z^4+z^3-z+1\right)\\
& \qquad   \left(z^{36}-z^{33}+z^{27}-z^{24}+z^{18}-z^{12}+z^9-z^3
   +1\right).
\end{align*}

\end{example}

\begin{lemma}\label{1/z2case}
Let $\phi(z) = 1/z^d$ for $d\geq 2$.  Then for $N>2$,

\begin{eqnarray}
2\nmid N
 &\Longrightarrow& 
\Phi_{N,\phi}^*(z) = 
\prod_{\substack{k\mid d^N + 1\\ k \nmid d^m + 1, \\m\mid N, m\neq N}} C_k(z) \label{oddN}\\
N=2n \text{ with } 2\nmid n
 &\Longrightarrow& 
\Phi_{N,\phi}^*(z) = 
\prod_{\substack{k\mid d^N - 1\\ k \nmid d^{2m} - 1, \\m\mid n, m\neq n \\ k\nmid d^n+1}} C_k(z)\label{evenN} \\
N=2^en \text{ with } e\geq 2 \text{ and } 2\nmid n
 &\Longrightarrow& 
 \Phi_{N,\phi}^*(z) = 
\prod_{\substack{k\mid d^N - 1\\ k \nmid d^m - 1, \\m\mid N, m\neq N}} C_k(z) \label{squareN}
\end{eqnarray}

\end{lemma}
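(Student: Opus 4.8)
The strategy is to compute the iterates of $\phi(z)=1/z^d$ explicitly, substitute into the M\"obius product defining $\Phi_{N,\phi}^*$, and identify the surviving cyclotomic factors. In homogeneous coordinates $\phi([x:y])=[y^d:x^d]$, so an immediate induction gives $\phi^k([x:y])=[y^{d^k}:x^{d^k}]$ for $k$ odd and $\phi^k([x:y])=[x^{d^k}:y^{d^k}]$ for $k$ even. Dehomogenizing (set $y=1$) and using $\Phi_{k,\phi}=yF_k-xG_k$, we get $\Phi_{k,\phi}(z)=1-z^{d^k+1}$ when $k$ is odd and $\Phi_{k,\phi}(z)=z(z^{d^k-1}-1)$ when $k$ is even. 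Since $z^a-1=\prod_{j\mid a}C_j(z)$, the whole problem reduces to computing, for each cyclotomic index $j$, the total exponent of $C_j(z)$ as a sum of values $\mu(N/k)$; I work up to an overall sign, which is forced to be $+1$ because the right-hand sides are monic.

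Suppose $N$ is odd, so $N\geq3$ and every divisor of $N$ is odd; then $\Phi_{N,\phi}^*(z)=\pm\prod_{k\mid N}(z^{d^k+1}-1)^{\mu(N/k)}$ and the exponent of $C_j(z)$ is $\sum_{k\mid N,\ j\mid d^k+1}\mu(N/k)$. The arithmetic input is the structure of $T_j:=\{k\geq1:j\mid d^k+1\}$: the cases $j\in\{1,2\}$ are handled directly, and for $j\geq3$ coprime to $d$ the set $T_j$ is empty unless $t:=\ord_j(d)$ is even and $d^{t/2}\equiv-1\pmod j$, in which case $T_j$ is exactly the set of odd multiples of $u:=t/2$. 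Hence the exponent of $C_j$ equals $\sum_{d'\mid N/u}\mu((N/u)/d')$ if $u\mid N$ and $0$ otherwise, which is $1$ precisely when $u=N$, i.e.\ precisely when $j\mid d^N+1$ but $j\nmid d^m+1$ for every proper divisor $m$ of $N$. This is~\eqref{oddN}.

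Now suppose $N=2n$ with $n$ odd, so $n>1$. Splitting the divisors of $2n$ into the odd ones $k\mid n$ and their doubles, and using $\mu(2n/k)=-\mu(n/k)$, gives $\Phi_{N,\phi}^*(z)=\pm\prod_{k\mid n}\bigl(\Phi_{2k,\phi}(z)/\Phi_{k,\phi}(z)\bigr)^{\mu(n/k)}$. Here $\Phi_{2k,\phi}(z)/\Phi_{k,\phi}(z)=\pm z\cdot(z^{d^{2k}-1}-1)/(z^{d^k+1}-1)$, a polynomial since $d^k+1\mid d^{2k}-1$, and the powers of $z$ cancel because $\sum_{k\mid n}\mu(n/k)=0$. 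Writing this quotient as $\prod_{l\mid d^{2k}-1,\ l\nmid d^k+1}C_l(z)$ and redoing the exponent computation, the one additional fact required is that whenever $T_l\neq\emptyset$ (with $l\geq3$) its least element coincides with the least $k$ such that $l\mid d^{2k}-1$ — both equal $t/2$, $t=\ord_l(d)$. With this the M\"obius sums telescope and assign exponent $1$ to $C_l(z)$ exactly when $l\mid d^N-1$, $l\nmid d^{2m}-1$ for every proper $m\mid n$, and $l\nmid d^n+1$, which is~\eqref{evenN}.

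Finally suppose $N=2^en$ with $e\geq2$ and $n$ odd. Then $\mu(N/k)=0$ unless $2^{e-1}\mid k$, so every contributing divisor is even; writing $k=2^{e-1}j$ with $j\mid2n$ and putting $D=d^{2^{e-1}}$ turns $\Phi_{N,\phi}^*(z)$ into $\prod_{j\mid2n}(z^{D^j-1}-1)^{\mu(2n/j)}$, which is exactly $\Phi_{2n,g}^*(z)$ for the power map $g(z)=z^D$; since $2n>1$, Lemma~\ref{z2case} applies. It then remains to see that ``$l\mid D^{2n}-1=d^N-1$ and $l\nmid D^m-1$ for proper $m\mid2n$'' is equivalent to ``$l\mid d^N-1$ and $l\nmid d^{m'}-1$ for all proper divisors $m'$ of $N$'': with $t=\ord_l(d)$, the exponents $a$ with $l\mid d^a-1$ are the multiples of $t$, and if a proper divisor of $N$ lies among them then so does $\mathrm{lcm}(t,2^{e-1})$, which is easily checked to be a proper divisor of $N$ divisible by $2^{e-1}$. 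This yields~\eqref{squareN}. The main obstacle throughout is the M\"obius/cyclotomic bookkeeping in the first two cases, above all pinning down the structure of $T_j$ and the coincidence of least exponents for $d^k+1$ and $d^{2k}-1$; once those are established the sums collapse via $\sum_{d'\mid M}\mu(M/d')=[M=1]$, and the last case is essentially a corollary of Lemma~\ref{z2case}.
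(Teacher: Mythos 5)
Your proposal is correct and follows the same overall decomposition as the paper's proof: compute $\phi^k(z)=1/z^{d^k}$ for $k$ odd and $\phi^k(z)=z^{d^k}$ for $k$ even, split the M\"obius product over divisors of $N$ by parity, and then treat the three cases $2\nmid N$, $N=2n$ with $n$ odd, and $4\mid N$ separately. The only stylistic difference is that where the paper repeatedly invokes ``the argument given in Lemma~\ref{z2case}'' (an induction on $N$), you instead compute the exponent of each $C_j(z)$ directly via the M\"obius sum and the structure of $T_j=\{k:j\mid d^k+1\}$; this is an equivalent way of organizing the cyclotomic bookkeeping and does not change the substance of the argument.
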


\begin{proof}
If $\phi(z) = \frac{1}{z^d}$, then for $k$ odd we have $\phi^k(z) = \frac{1}{z^{d^k}}$, and for $k$ even we have $\phi^k(z) = z^{d^k}$.  So we may calculate
\begin{equation}\label{twoprods}
\Phi_{N,\phi}^*(z) =
\left(\prod_{\substack{k\mid N \\ 2\mid k}} \left(z^{d^k}-z\right)^{\mu(N/k)}\right)
\left(\prod_{\substack{k\mid N \\ 2\nmid k}} \left(z^{d^k+1}-1\right)^{\mu(N/k)}\right).
\end{equation}

If $4\mid N$, then for any odd divisor $k$ of $N$, we have $\mu(N/k)=0$, so the second product is empty, and the first may be taken over \emph{all} divisors of $N$.  So we rewrite the product as
\begin{align*}
\prod_{k\mid N} \left(z^{d^k}-z\right)^{\mu(N/k)}
&=
\left( \prod_{k\mid N}z^{\mu(N/k)}\right)
\left( \prod_{k\mid N} \left(z^{d^k-1}-1\right)^{\mu(N/k)}\right)\\
&=
\prod_{k\mid N} \left(z^{d^k-1}-1\right)^{\mu(N/k)},
\end{align*}
which simplifies to the expression in equation~\eqref{squareN} by the argument given in Lemma~\ref{z2case}.

If $N$ is odd, it has no even divisors; so the first term is an empty product and the second may be taken over all divisors of $N$.  The argument in this case follows as in Lemma~\ref{z2case}.  Note that since $N$ is odd, $d^m+1 \mid d^N+1$ when $m\mid N$.

It remains only to consider the case that $2\mid N$ but $4 \nmid N$.  So $N=2n$ with $n$ odd.  Then any odd divisor $k$ of $N$ is simply a divisor of $n$.  We see that for such divisors, $\mu(N/k) = \mu(2n/k) = -\mu(n/k)$.  So the second term in equation~\eqref{twoprods} is
\begin{eqnarray}
\prod_{\substack{k\mid N \\ 2\nmid k}} \left(z^{d^k+1}-1\right)^{\mu(N/k)}
&=&
\prod_{k\mid n } \left(z^{d^k+1}-1\right)^{-\mu(n/k)}\nonumber\\
&=&
\prod_{\substack{k\mid d^n+1 \\ k\nmid d^m+1, m\mid n}} \frac{1}{C_k(z)},  \label{odddiv}
\end{eqnarray}
by the argument for the case when $N$ is odd.

Similarly, we recognize that even divisors of $N$ are of the form $2k$ where $k\mid n$, and in this case $\mu(N/2k) = \mu(2n/2k) = \mu(n/k)$.  The first term in equation~\eqref{twoprods} then becomes
\begin{eqnarray}
\prod_{\substack{k\mid N \\ 2\mid k}} \left(z^{d^k-1}-1\right)^{\mu(N/k)}
&=&
\prod_{k\mid n } \left(z^{d^{2k}-1}-1\right)^{\mu(n/k)}\nonumber\\
&=&
\prod_{\substack{k\mid d^N-1 \\ k\nmid d^{2m}-1, m\mid n}} C_k(z).\label{evendiv}
\end{eqnarray}
Multiplying equations~\eqref{evendiv} and~\eqref{odddiv}, we get precisely the expression in equation~\eqref{squareN}.
\end{proof}

\begin{example}
Let $\phi(z) = 1/z^2$.  Here are factorizations for the first few dynatomic polynomials:
\begin{align*}
\Phi_1^*(z) &=(1-z) \left(z^2+z+1\right)\\
\Phi_2^*(z) &= -z\\
\Phi_3^*(z) &= z^6+z^3+1\\
\Phi_4^*(z) &= \left(z^4+z^3+z^2+z+1\right)
   \left(z^8-z^7+z^5-z^4+z^3-z+1\right)\\
\Phi_5^*(z) &= \left(z^{10}+z^9+z^8+z^7+z^6+z^5+z^4+z^3+z^2+z+1\right)\\
  & \qquad  \left(z^{20}-z^{19}+z^{17}-z^{16}+z^{14}-z^{13}+z^{11}-
   z^{10}+z^9 \right.\\
   & \qquad \qquad\left. -z^7+z^6-z^4+z^3-z+1\right)\\
\Phi_6^*(z) &= \left(z^6+z^5+z^4+z^3+z^2+z+1\right)\\
&\qquad   \left(z^{12}-z^{11}+z^9-z^8+z^6-z^4+z^3-z+1\right)\\
& \qquad \quad  \left(z^{36}-z^{33}+z^{27}-z^{24}+z^{18}-z^{12}+z^9-z^3
   +1\right).
\end{align*}

\end{example}

\begin{cor}\label{reducecor}
\qquad
\begin{enumerate}[\textup(a\textup)]
\item\label{zdred}
Let $\phi(z) = z^d$.  If $d>2$, then $\Phi_{N,\phi}^*(z)$ is reducible for every $N$.  If $d=2$, then $\Phi_{N,\phi}^*(z)$ is irreducible if and only if $2^N-1$ is prime.
\item\label{1/zdred}
Let $\phi(z) = 1/z^d$.  If $d>2$, then $\Phi_{N,\phi}^*(z)$ is reducible for every $N$.  If $d=2$, then $\Phi_{N,\phi}^*(z)$ is reducible for every $N\neq 2$ or~$3$.
\end{enumerate}
\end{cor}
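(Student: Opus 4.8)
The approach is to read the whole statement off the explicit factorizations in Lemmas~\ref{z2case} and~\ref{1/z2case}. Each of these writes $\Phi_{N,\phi}^*(z)$ as a product of \emph{distinct} cyclotomic polynomials $C_k(z)$ over an explicitly described set of integers~$k$. Since each $C_k$ is irreducible over $\Q$ and distinct cyclotomic polynomials are pairwise coprime, $\Phi_{N,\phi}^*$ is irreducible over $\Q$ precisely when that index set is a singleton, and reducible as soon as it has at least two elements. Thus the corollary reduces to counting divisors with a prescribed multiplicative behaviour. For $\phi(z)=z^d$ the index set is $\{k:\ord_k(d)=N\}$, since $k\mid d^N-1$ together with $k\nmid d^m-1$ for every proper divisor $m\mid N$ is equivalent to $\ord_k(d)=N$; for $\phi(z)=1/z^d$ one has the three analogous sets appearing in \eqref{oddN}, \eqref{evenN}, \eqref{squareN}. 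The values $N=1$ (and $N=2$ for the reciprocal maps), which are not covered by the Lemmas, are treated directly: $\Phi_{1,z^d}^*=z^{d}-z=z\prod_{k\mid d-1}C_k(z)$ and $\Phi_{1,1/z^d}^*=z^{d+1}-1=\prod_{k\mid d+1}C_k(z)$ are both reducible, while $\Phi_{2,1/z^d}^*=\pm z\,(z^{d^2-1}-1)/(z^{d+1}-1)$ is reducible for $d\ge3$ and equals $-z$, which is irreducible, when $d=2$.

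For part~\ref{zdred} with $d\ge 3$ I would exhibit two distinct elements of the index set, namely $k_1=d^N-1$ and $k_2=(d^N-1)/(d-1)=1+d+\dots+d^{N-1}$: a short computation shows $\ord_{k_i}(d)=N$ for both (if $k_i\mid d^e-1$ with $e\mid N$ and $e<N$ then $k_i\le d^e-1$, which is impossible), and $k_1\ne k_2$ because $d-1\ge 2$. Hence $\Phi_{N,z^d}^*$ is reducible for every $N$. For $d=2$: if $2^N-1$ is prime then its only divisors are $1$ and $2^N-1$, and $1$ never lies in the index set for $N>1$, so $\Phi_{N,z^2}^*=C_{2^N-1}(z)$ is irreducible; conversely, if $2^N-1$ is composite with $N\ge 2$, I must exhibit a second index besides $2^N-1$. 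For $N\ne 6$, Zsygmondy's theorem supplies a primitive prime divisor $q$ of $2^N-1$, i.e.\ one with $\ord_q(2)=N$; moreover $2^N-1$ cannot be a prime power $q^b$ with $b\ge 2$ --- if $b$ is even then $q^b+1\equiv 2\pmod 8$ forces $N=1$, and if $b\ge 3$ is odd then $q^b+1$ is divisible by $q+1$ with an odd cofactor $>1$ --- so $q^{v_q(2^N-1)}$ is a proper divisor of $2^N-1$ with $\ord(2)=N$, distinct from $2^N-1$. The lone value $N=6$ is checked by hand ($2^6-1=63$, with factors $C_9,C_{21},C_{63}$), completing part~\ref{zdred}.

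Part~\ref{1/zdred} is the same bookkeeping carried out in each regime of Lemma~\ref{1/z2case}. For $d\ge 3$: when $N$ is odd, $d^N+1$ and $(d^N+1)/(d+1)$ both lie in the set \eqref{oddN} and are distinct since $d+1\ge 4$; when $4\mid N$, the set \eqref{squareN} is $\{k:\ord_k(d)=N\}$ and contains $d^N-1$ and $(d^N-1)/(d-1)$; when $N=2n$ with $n$ odd, $d^n-1$ and $d^{2n}-1$ both lie in \eqref{evenN} (both divide $d^{2n}-1$, neither divides $d^{2m}-1$ for a proper divisor $m\mid n$ since $n$ is odd, and neither divides $d^n+1$ because $\gcd(d^n-1,d^n+1)\le 2$ while $d^{2n}-1>d^n+1$). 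Together with the $N\le 2$ checks above this gives reducibility of $\Phi_{N,1/z^d}^*$ for every $N$. For $d=2$: $N=1$ gives $z^3-1=C_1C_3$, $N=2$ gives $-z$, and $N=3$ gives $C_9(z)$ --- the reducible value and the two listed exceptions; for $N\ge 4$ the relevant index set has at least two elements, by the argument of part~\ref{zdred} when $4\mid N$ (here $N$, hence $2^N-1$, is composite), by $2^N+1$ and $(2^N+1)/3$ when $N$ is odd, and by $2^n-1$ together with $2^{2n}-1$ when $N=2n$ with $n$ odd.

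The main obstacle is precisely the $d=2$ subcases: for $d\ge3$ the ``second factor'' $(d^N\mp1)/(d\mp1)$ is genuinely a new index, but for $d=2$ it collapses onto the first, so one is forced to reason about the prime factorization of $2^N\pm1$ --- this is where Zsygmondy's theorem, the elementary exclusion of proper prime-power values, and the case-check at $N=6$ enter. Everything else is routine verification about multiplicative orders.
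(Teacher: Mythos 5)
Your proof is correct and follows the same overall strategy as the paper: read off the explicit cyclotomic factorizations from Lemmas~\ref{z2case} and~\ref{1/z2case}, use that distinct cyclotomic polynomials are pairwise coprime irreducibles over~$\Q$, and count the index set. Your reformulation of the index set for~\eqref{powerfactor} as $\{k : \ord_k(d) = N\}$ is clean and correct, and your treatment of the values $N\le 2$ not covered by the lemmas matches the paper's. Two subcases deserve comment.

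For part~(a) with $d=2$, your invocation of Zsygmondy's theorem, a separate argument that $2^N-1$ is never a nontrivial prime power, and a manual check at $N=6$ is much heavier machinery than the paper uses. The paper's route is elementary: when $N$ is prime, the only proper divisor of $N$ is $1$ and $2^1-1=1$, so the index set of~\eqref{powerfactor} is simply the set of \emph{all} divisors of $2^N-1$ greater than~$1$, which has at least two elements precisely when $2^N-1$ is composite; when $N$ is composite, $(2^N-1)/(2^k-1)$ with $k$ the smallest prime factor of $N$ is a second index, since it is at least $2^{N-k} \ge 2^{N/k} > 2^m-1$ for every proper divisor $m\mid N$. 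No Zsygmondy, no Catalan, no case-check at $N=6$.

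For part~(b) with $N=2n$, $n$ odd $\ge 3$, your choice of $d^n-1$ and $d^{2n}-1$ is correct, and it is \emph{better} than what the paper writes. The paper claims $C_{d^m-1}\mid\Phi_N^*$ for $m$ an odd divisor of $N$, in particular $C_{d-1}$; but for $n\ge 3$ the integer $d-1$ divides $d^{2m}-1$ for the proper divisor $m=1$ of $n$, and so is excluded from the index set of~\eqref{evenN}. Concretely, for $d=3$, $N=6$ one computes $\Phi_6^* = C_{13}C_{26}C_{52}C_{56}C_{91}C_{104}C_{182}C_{364}C_{728}$, and $C_2\nmid\Phi_6^*$. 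Only the choice $m=n$, giving $C_{d^n-1}$, survives all three exclusion conditions in~\eqref{evenN}, which is exactly your first index; pairing it with $d^{2n}-1$ yields reducibility. So in this subcase your argument not only differs from but actually repairs the one in the paper.
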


\begin{proof}
\eqref{zdred}. $\Phi^*_1(z) = z^d-z$, which is reducible.  Lemma~\ref{z2case} says that $\Phi_{N,\phi}^*(z)$ for $N>1$ is irreducible if and only if $d^N-1$ has no divisors other than those which divide $d^m-1$ where $m\mid N$.
We always have $d-1 \mid d^N-1$, and if $d>2$, so the the quotient $d^{N-1} + \cdots +d+1$ does not divide $d^m-1$ for any $m<N$.  So then $\Phi_{N,\phi}^*(z)$ is reducible for every $N$.

In the case $d=2$, we see that if $2^N-1$ is prime, then $\Phi_{N,\phi}^*(z)=C_{2^N-1}(z)$ is irreducible.  If $N$ is prime but $2^N-1$ is not prime, then $2^N-1$ has a factor not of the form $2^m-1$ with  $m \mid N$.  So then $\Phi_{N,\phi}^*(z)$ is reducible.  Finally, if $N$ is composite, let $k$ be the smallest positive divisor of $N$.  We argue as above that  the quotient of $2^N-1$ and $2^k-1$ does not divide $2^m-1$ for any $m\neq N$.  So then $\Phi_{N,\phi}^*(z)$ has at least two nontrivial factors, namely $C_{2^N-1}(z)$ and $C_{(2^N-1)/(2^k-1)}(z)$.

\eqref{1/zdred}.  In this case, $\Phi^*_1(z) = z^{d+1}-1$, which is reducible since $d\geq 2$.  If $4\mid N$,  then $2^N-1$ is not prime, so the fact that $\Phi_{N,\phi}^*(z)$ is reducible follows from the fact that it is identical to $\Phi_{N,\phi}^*$ for the map $\phi(z) = z^d$, which is reducible in this case by the argument above.  

If $N$ is odd, we need to check that $d^N+1$ has a factor which does not divide $d^m+1$ for any $m\mid N$.  Let $k$ be the smallest nontrivial divisor of $N$.  Then $(d^N+1)/(d^k+1) = d^{N-k}-d^{N-2k}+\cdots+1$.  For this quotient to divide $d^m+1$, it is certainly necessary that $N-k <m$,  But with $N\geq 4$ and $k$ the smallest divisor of $N$, we have $N-k \geq N/k$ which is the largest divisor of $N$.  So we have at least two nontrivial factors of $\Phi_{N,\phi}^*$, namely $C_{d^N+1}(z)$ and $C_{(d^N+1)/(d^k+1)}(z)$ with $k$ the smallest positive divisor of $N$.

Finally, if $2\mid N$ but $4 \nmid N$, we see from equation~\ref{evenN} that for $m$ an odd divisor of $N$, we have $C_{d^m-1} (z) \mid \Phi_{N,\phi}^*(z)$.   In particular, $C_{d-1} (z) \mid \Phi_{N,\phi}^*(z)$ is a nontrivial factor as long as $d>2$.  If $d=2$, we see from the example above that $\Phi_1^*$ is reducible and that $\Phi_2^*$ and $\Phi_3^*$ are not.  If $N>3$, we are assured of an odd factor of $N$ greater than one, and so all such $\Phi_{N,\phi}^*$ will be reducible.
\end{proof}

\section{An irreducibility result}

We now focus on the case $\deg\phi=2$, and prove that if $2^N-1$ is prime, then the dynamic modular curve $M_2(N, \mathfrak C_2)$ is irreducible.

\begin{lemma}\label{multform}
Let $\phi(z)$ be a rational map of degree $d=2$ with a automorphism group of order~$2$ over a field~$K$ with characteristic different from  $ 2$.   Then $\phi$ is $\PGL_2$-conjugate to a unique map of the form
\begin{equation}
\psi(z) = \frac{z^2+az}{az+1},
\end{equation}
where $\psi$ is defined over some finite extension of $K$ and $a^2\neq 1$.
\end{lemma}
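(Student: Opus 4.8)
The plan is to follow the proof of Lemma~\ref{normform}, reducing to Milnor's normal form for degree-$2$ rational maps, while taking care that the argument survives over an arbitrary field of characteristic different from~$2$ and keeping track of the field of definition of the resulting map.

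First I would let $h$ generate $\Aut(\phi)$, an element of order~$2$. Since $\Aut(\phi) = \{\id, h\}$ is stable under $\Gal(\overline K/K)$, the element $h$ is defined over $K$; and by Lemma~\ref{2fp}(a) (applied with $p = 2$, using $\ch K \neq 2$) it has two distinct fixed points, which lie in an extension of $K$ of degree at most~$2$. Next I would invoke the description of $M_2$: the scheme $M_2 \cong \A^2_{\Z}$ has coordinates $(\sigma_1, \sigma_2)$, the first two elementary symmetric functions of the three fixed-point multipliers (equation~\eqref{m2coords}), subject to the relation $\sigma_3 = \sigma_1 - 2$ obtained from~\eqref{fpident}. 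This is Milnor's classification, and because it is a statement about $M_2$ over $\Z$ it remains valid over $K$ once $\ch K \neq 2$; concretely, a degree-$2$ map with three distinct simple fixed points of multipliers $\lambda_1, \lambda_2, \lambda_3$, with $\lambda_1 \lambda_2 \neq 1$, is conjugate over $\overline K$ to $\phi_{\lambda_1, \lambda_2}(z) = (z^2 + \lambda_1 z)/(\lambda_2 z + 1)$.

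Now I would bring in the automorphism. Since $h \neq \id$ cannot fix three points of $\PP^1$, it transposes two fixed points of $\phi$, say $P_1 \leftrightarrow P_2$, and fixes the third, $P_3$. Differentiating $\phi h = h\phi$ at $P_1$, as on page~\pageref{equalmultsfp}, forces the multipliers at $P_1$ and $P_2$ to agree; call the common value $a$. The degenerate values of $a$ are excluded: $a = 1$ would make $P_1$ a fixed point of multiplicity $\geq 2$, contradicting distinctness, and $a = -1$ is incompatible with~\eqref{fpident}, which would read $\tfrac12 + \tfrac12 + \tfrac1{1 - b} = 1$ for the third multiplier $b$. Hence $a^2 \neq 1$, and Milnor's classification gives $\phi$ conjugate over $\overline K$ to $\psi = \phi_{a,a}(z) = (z^2 + az)/(az + 1)$. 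For the field of definition, the relation $\sigma_3 = \sigma_1 - 2$ shows the repeated multiplier $a$ satisfies $3T^2 - 2\sigma_1 T + \sigma_2 = 0$ over $K$, so $[K(a):K] \leq 2$, and together with the bounded extension needed to move the fixed points into standard position, $\psi$ is defined over a finite extension of $K$. Uniqueness is immediate: $\PGL_2$-conjugate maps share their multiplier multiset $\{a, a, 2/(1+a)\}$, and that multiset determines $a$.

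I expect the real obstacle to be expository rather than conceptual: one must check carefully that Milnor's normal form and the multiplier identity~\eqref{fpident} genuinely hold over a general field of characteristic $\neq 2$ (Lemma~\ref{normform} invoked them via the complex-analytic classification only), and one must isolate which special values of $a$ — in particular $a = -2$, where all three multipliers coincide and $\Aut(\phi)$ becomes strictly larger — need to be set aside so that the hypothesis ``$\Aut(\phi)$ has order exactly~$2$'' is consistent with the conclusion. Finally, the exceptional map $z + 1/z$ of Lemma~\ref{normform}, which has a single fixed point yet $\Aut$ of order~$2$, is implicitly excluded here, the lemma being understood for $\phi$ conjugate to a map with three distinct fixed points.
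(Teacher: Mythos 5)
Your proposal follows essentially the same route as the paper's proof: reduce to Milnor's normal form, observe that an order-$2$ automorphism $h$ must transpose two of the three fixed points, use $\phi h = h\phi$ to force their multipliers to coincide, and then exclude the degenerate multiplier values. The paper executes this by explicitly conjugating the transposed fixed points to $0$ and $\infty$ and normalizing coefficients; you instead lean more heavily on the symmetric-function coordinates $(\sigma_1,\sigma_2)$ and the identity $\sigma_3 = \sigma_1 - 2$, which gives a cleaner route to the bound $[K(a):K]\le 2$ for the field of definition. Both are correct and recognizably the same argument.

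One remark worth making explicit: you are right to flag the map $z+\tfrac1z$ as an issue. The paper dismisses it by asserting that the unique quadratic map with a single fixed point has automorphism group $\mathcal S_3$, but a direct computation (an automorphism must fix $\infty$, hence has the form $z\mapsto az+b$, and commuting with $z+\tfrac1z$ forces $a^2=1$, $ab=0$) shows $\Aut(z+\tfrac1z)=\{z,-z\}\cong\mathfrak C_2$, not $\mathcal S_3$. So $z+\tfrac1z$ genuinely satisfies the hypothesis of the lemma as literally stated yet is not conjugate to any $\phi_{a,a}$; your caveat that the lemma should be read as restricted to maps with three distinct fixed points is not merely expository caution but actually patches a small error in the paper's proof. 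Your observation about $a=-2$ (where all three multipliers equal $-2$ and $\Aut\cong\mathcal S_3$) is the complementary exclusion that keeps the ``order exactly $2$'' hypothesis honest, though it does not affect the conclusion of the lemma since that map \emph{is} of the form $\phi_{a,a}$.
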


\begin{proof}
The proof is identical to the one Milnor gives
in~\cite{milnrat} for maps defined over~$\C$.  We sketch the argument here to show that it works for other fields as well.  In the remarks before Lemma~\ref{leadx}, we showed that $\phi$ must have three distinct fixed points (the only map with a single fixed point has automorphism group isomorphic to $\mathcal S_3$, so we may disregard this case).  We know that $h \in \Aut(\phi)$ for some $h$ of order~$2$.  Since any nontrivial element of $\text{PGL}_2$ cannot fix three points in $\PP^1$, we see that  $h$ must interchange two fixed points of $\phi$. 

 We conjugate by an appropriate $f\in \PGL_2\left(\overline K\right)$ so the two fixed points interchanged by $h$ move to~$0$ and~$\infty$.  Then 
\[
\phi^f(z) = z\left(\frac{az+b}{cz+d}\right).
\]
(Note that the fixed points may be elements of some cubic extension of $K$ --- in fact, we will see in the next section that they are at most in a quadratic extension of $K$ --- so $\phi^f$ is defined over the field we get by adjoining the fixed points of $\phi$ to $K$.)
We have $\deg\phi=2$, so also $\deg\phi^f = 2$.  We conclude that $ad \neq 0$ and $ad-bc \neq 0$.  Dividing the numerator and denominator by $d$ leaves $\phi$ unchanged as a rational map, so we may assume that $d=1$.  Finally, we conjugate by $g(z) = z/a$, which gives the map
\[
\psi(z) = z\left(\frac{z+b}{(c/a) z+1}\right),
\]
 where $b$ and $c/a$ are the multipliers of the fixed points at $0$ and $\infty$.  By construction, the map $\psi$ has  an automorphism that interchanges the fixed points at~$0$ and~$\infty$.  If an automorphism of a rational map interchanges two fixed points, then the fixed points have equal multipliers. We conclude that the multipliers at~$0$ and~$\infty$ are equal.  That is,  $b = c/a$, and $\psi$ has the desired form.   (The fact that $a^2\neq 1$ follows from the fact that $\deg \psi = 2$.)

To see that the value of $a$ is unique, we note that if $\psi^f$ has the same form, then $f$ must fix the set $\left\{0, \infty\right\}$, so $f(z) = \alpha z$ or $f(z) = \alpha/z$ for some $\alpha \in \overline K^*$.  A calculation shows that in either case, $\alpha=1$ is necessary to preserve the form of $\psi$.
\end{proof}

Let $M_2(N,\mathfrak C_2)$ be the moduli space of degree-2 rational maps with an automorphism of order~$2$, together with a point of formal period $N$.  For each $N$, this is an algebraic curve lying in the moduli space $M_2(N)$.
 Lemma~\ref{multform} says that all but one of these maps form a one-parameter family in $M_2$, parameterized by $a$.  We use this fact to prove our irreducibility result. 

\begin{prop}\label{primeirred}
If $2^N-1$ is prime, then the curve $M_2(N,\mathfrak C_2)$ is irreducible.
\end{prop}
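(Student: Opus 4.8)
The plan is to present $M_2(N,\mathfrak C_2)$ as the image of an explicit plane curve and then prove that curve is irreducible. By Lemma~\ref{multform}, every conjugacy class in $M_2(\mathfrak C_2)$ other than the single class of $z+1/z$ is represented by a unique $\psi_a(z)=(z^2+az)/(az+1)$ with $a^2\ne 1$, and the automorphism $z\mapsto 1/z$ sends a marked point $\alpha$ to $1/\alpha$. Thus the affine plane curve
\[
\widetilde Y\colon\ \Phi^*_{N,\psi_a}(z,a)=0
\]
carries a morphism $\widetilde Y\to M_2(N,\mathfrak C_2)$, $(\alpha,a)\mapsto\bigl[(\psi_a,\alpha)\bigr]$, defined away from the finitely many points with $a^2=1$, and its image is all of $M_2(N,\mathfrak C_2)$ except for the finitely many marked points lying over the $z+1/z$ class. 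Since $M_2(N,\mathfrak C_2)$ is a curve, it is the closure of this dense image, so it is irreducible as soon as $\widetilde Y$ is; equivalently, it suffices to prove that $\Phi^*_{N,\psi_a}(z,a)\in\Z[z,a]$ is irreducible over $\Q$.

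For this I would argue in the spirit of the proof of Proposition~\ref{irred2}, by specializing $a=0$. Setting $b=a$ in Lemma~\ref{leadx} shows that, as a polynomial in $z$, $\Phi^*_{N,\psi_a}(z,a)$ has both leading coefficient (that of $z^{\nu_2(N)}$) and constant term equal to the cyclotomic polynomial $C_N(a)$; in particular its content in $\Z[a][z]$ divides $C_N(a)$. Since a non-degenerate $\psi_{a_0}$ has a nonzero dynatomic polynomial, the content can vanish only at the degenerate parameters $a=\pm1$, hence is a unit for $N\ge 3$ (and equals $a+1$ for $N=2$); in either case, replacing $\Phi^*_{N,\psi_a}$ by its primitive part $P(z,a)$ — which agrees with it up to a unit at $a=0$, as $C_N(0)=1$, and whose extra vanishing locus lies outside $M_2(N,\mathfrak C_2)$ — it is enough to show $P$ is irreducible over $\Q$. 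A factorization $P=f_1f_2$ over $\Q$ into nonconstant factors must then have $\deg_z f_1,\deg_z f_2\ge 1$ (as $P$ is primitive), with the product of the leading $z$-coefficients of $f_1$ and $f_2$ equal to $C_N(a)$ up to a unit.

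Now specialize $a=0$. Since $N>1$ we have $C_N(0)=1\ne 0$, so neither leading coefficient vanishes and the factorization descends to a factorization of $P(z,0)=\pm\Phi^*_{N,\psi_0}(z)$ into two nonconstant polynomials over $\Q$. But $\psi_0(z)=z^2$, and Lemma~\ref{z2case} with $d=2$ expresses $\Phi^*_{N,z^2}(z)$ as $\prod C_k(z)$ over those divisors $k$ of $2^N-1$ dividing no $2^m-1$ with $m\mid N$, $m\ne N$. When $2^N-1$ is prime, $N$ is itself prime, so its only proper divisor is $1$; the divisor $k=1$ is discarded (it divides $2^1-1$), while $k=2^N-1$ survives, giving $\Phi^*_{N,\psi_0}(z)=C_{2^N-1}(z)$, which is irreducible over $\Q$ by the classical theorem on cyclotomic polynomials. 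This contradicts the nontrivial factorization of $P(z,0)$; hence $\Phi^*_{N,\psi_a}(z,a)$, and therefore $M_2(N,\mathfrak C_2)$, is irreducible.

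The main obstacle is not the algebra but the base field: the specialization at $a=0$ can only detect irreducibility over a field over which $C_{2^N-1}$ remains irreducible — such as $\Q$ — and says nothing about geometric irreducibility, since over $\overline\Q$ the specialized polynomial $\Phi^*_{N,z^2}$ splits completely; establishing geometric irreducibility would instead demand a monodromy/connectedness analysis of the cover $\widetilde Y\to\A^1_a$ near its branch locus, in the style of Morton's argument. A secondary point needing care is verifying that $\widetilde Y\to M_2(N,\mathfrak C_2)$ really is a morphism with dense (cofinite) image — i.e. correctly handling the $z+1/z$ locus, the degenerate fibres $a^2=1$, and the identification of $\alpha$ with $1/\alpha$ — so that irreducibility of $\widetilde Y$ transfers to $M_2(N,\mathfrak C_2)$.
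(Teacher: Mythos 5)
Your proof takes essentially the same route as the paper's: present $M_2(N,\mathfrak C_2)$ via the one-parameter family $\psi_a(z)=(z^2+az)/(az+1)$, observe from Lemma~\ref{leadx} (with $b=a$) that the lead coefficient of $\Phi^*_{N,\psi_a}$ in $z$ is $C_N(a)$ and hence nonvanishing at $a=0$, then specialize to $a=0$ where $\psi_0(z)=z^2$ and invoke $\Phi^*_{N,z^2}=C_{2^N-1}$, which is irreducible over $\Q$ exactly when $2^N-1$ is prime. Your closing caveat about geometric irreducibility is well observed but is not a gap: unlike Theorem~\ref{irred}, Proposition~\ref{primeirred} deliberately claims only irreducibility (over $\Q$) rather than geometric irreducibility, so the specialization argument is exactly sufficient for the stated result.
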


\begin{proof}
Consider the dynatomic polynomials $\Phi^*_{N,\phi_a}(x,y) = \Phi^*_N(x,y,a)$ with $\phi_a$ given by the normal form from Lemma~\ref{multform}.  
The curves $M_2(N, \mathfrak C_2)$ can be defined by the vanishing locus $ \Phi^*_N(x,y,a)=0$.  We will prove the curves are irreducible by proving that the dynatomic polynomials are.

Suppose that $\Phi_N^*(x,y) = A(x,y)B(x,y)$ with $\deg_x(A), \deg_x(B) \geq 1$.  Exactly as in Lemma~\ref{irred2}, we see that specializing to $a=0$ will cause neither factor to become trivial.
But specializing to $a=0$, we have the map 
$\phi(z) = z^2$, so by Corollary~\ref{reducecor}, $\Phi_{N,\phi}^*(z)$ is irreducible if $2^N-1$ is prime. 
\end{proof}

\begin{remark}
For $N=2$, the polynomial $\Phi_2^*(x,y,a)$ has a factor of $a+1$, but since it is not defined for $a^2=1$ (this does not give a degree~$2$ map), this does not correspond to reducibility in the corresponding variety.  
\end{remark}

\begin{remark}
Of course, there are only $46$ known primes of the form $2^N-1$.  It seems likely that if $N$ is odd, then $M_2(N,\mathfrak C_2)$ is irreducible.  The curve cannot split as described for even $N$, and since the generic behavior of dynamic modular curves is irreduciblity, we expect that to be the case except when there is some clear reason for the curve to be reducible.  However, we have been unable to find a proof of this more general result.
\end{remark}

\subsubsection*{Acknowledgements}
The author thanks Joe Silverman for all of his help in every stage of this research, Steven J. Miller for his careful reading of an earlier draft,  Dan Abramovich for his careful reading, extensive comments, and suggestion of the term ``$h$-tuned dynatomic polynomials,'' and Jonathan Wise.  The author also thanks the reviewer for careful and thoughtful edits, which improved both the form and the content of the paper enormously.

\bibliographystyle{plain}	
\bibliography{refs}

\end{document}